\documentclass[11pt, oneside]{article}   	
\usepackage[margin=1in]{geometry}              		
\geometry{letterpaper}                   		
\usepackage{graphicx}				
								

\usepackage{amssymb}

\usepackage{amsthm}
\usepackage{amssymb}
\usepackage{amsfonts}
\usepackage{hyperref}
\usepackage{xcolor}
\usepackage{afterpage}
\usepackage{extarrows}
\usepackage{amsmath}
\usepackage{enumitem}
\usepackage{verbatim}
\usepackage{csquotes}
\usepackage{placeins}
\usepackage[capitalize]{cleveref}

\usepackage{graphicx}

\newtheorem{thm}{Theorem}
\numberwithin{thm}{section}
\newtheorem{conj}[thm]{Conjecture}
\newtheorem{prop}[thm]{Proposition}
\newtheorem{cor}[thm]{Corollary}
\newtheorem{claim}[thm]{Claim}
\newtheorem{lem}[thm]{Lemma}

\numberwithin{sch}{subsection}




\theoremstyle{definition}
\newtheorem{qn}{Question}

\newtheorem{defin}{Definition}
\numberwithin{defin}{section}


\numberwithin{prot}{subsection}

\newtheorem{rem}[thm]{Remark}
\newtheorem{fact}[thm]{Fact}

\numberwithin{exs}{subsection}
\newcommand{\Z}{\mathbb{Z}}

\newcommand{\R}{\mathbb{R}}
\newcommand{\C}{\mathbb{C}}
\newcommand{\F}{\mathbb{F}}
\newcommand{\prob}{\mathbb{P}}
\newcommand{\E}{\mathbb{E}}

\newcommand{\cL}{\mathcal{L}}

\newcommand{\cS}{\mathcal{S}}

\newcommand{\lamdba}{\lambda}

\newcommand{\Mat}{\operatorname{Mat}}

\newcommand{\spn}{\operatorname{span}}

\newcommand{\vol}{\operatorname{vol}}

\newcommand{\supp}{\operatorname{supp}}

\newcommand{\disc}{\operatorname{disc}}
\newcommand{\herdisc}{\operatorname{herdisc}}

  \newcommand{\poly}{\operatorname{poly}}

  \newcommand{\eps}{\varepsilon}


\newcounter{const}
\makeatletter
\let\ref\@refstar
\makeatother
\title{On the Discrepancy of Random Matrices with Many Columns}
\author{Cole Franks\thanks{Department of Mathematics, Rutgers University, Piscataway, NJ 08854.
Supported in part by Simons Foundation award 332622.}, Michael Saks\footnotemark[1]}

\begin{document}
\maketitle
\abstract{ Motivated by the Koml\'os conjecture in combinatorial discrepancy, we study the discrepancy of random matrices with $m$ rows and $n$ independent columns drawn from a bounded lattice random variable. It is known that for $n$ tending to infinity and $m$ fixed, with high probability the $\ell_\infty$-discrepancy is at most twice the $\ell_\infty$-covering radius of the integer span of the support of the random variable. However, the easy argument for the above fact gives no concrete bounds on the failure probability in terms of $n$. We prove that the failure probability is inverse polynomial in $m, n$ and some well-motivated parameters of the random variable. We also obtain the analogous bounds for the discrepancy in arbitrary norms.

We apply these results to two random models of interest. For random $t$-sparse matrices, i.e. uniformly random matrices with $t$ ones and $m-t$ zeroes in each column, we show that the $\ell_\infty$-discrepancy is at most $2$ with probability $1 - O(\sqrt{ \log n/n})$ for $n = \Omega(m^3 \log^2 m)$. This improves on  a bound proved by Ezra and Lovett (Ezra and Lovett, \emph{Approx+Random}, 2015) showing that the same is true for $n$ at least $m^t$.  For matrices with random unit vector columns, we show that the $\ell_\infty$-discrepancy is $O(\exp(\sqrt{ n/m^3}))$ with probability $1 - O(\sqrt{ \log n/n})$ for $n = \Omega(m^3 \log^2 m).$ Our approach, in the spirit of Kuperberg, Lovett and Peled (G. Kuperberg, S. Lovett and R. Peled, STOC 2012), uses  Fourier analysis to prove that for $m \times n$  matrices $M$ with i.i.d. columns, and $n$ sufficiently large,  the distribution of $My$ for random $y \in \{-1,1\}^n$ obeys a local limit theorem.
}


\tableofcontents

\section{Introduction}
The topic of this paper is combinatorial discrepancy, a well-studied parameter of a set system or matrix with many applications to combinatorics, computer science and mathematics \cite{C00, M09}. Discrepancy describes the extent to which the sets in a set system can be simultaneously split into two equal parts, or two-colored in a balanced way. Let $\cS$ be a collection (possibly with multiplicity) of subsets of a finite set $\Omega$. The $\ell_\infty$-discrepancy of a two-coloring of the set system $(\Omega, \cS)$ is the maximum imbalance in color over all sets $S$ in $\cS$. The $\ell_\infty$-disrepancy\footnote{often just referred to as the discrepancy} of $(\Omega, \cS)$ is the minimum discrepancy of any two-coloring of $\Omega$. Formally,
$$\disc(\Omega, \cS) := \min_{\chi:\Omega \to \{+1, -1\}} \max_{S \in \cS} |\chi(S)|,$$
where $\chi(S) = \sum_{x \in S} \chi(x)$. More generally, the discrepancy of a matrix $M \in \Mat_{m\times n}(\C)$ or $\Mat_{m\times n}(\R)$ is 
$$\disc(M) = \min_{v \in \{+1, -1\}^n} \|Mv\|_\infty .$$ If $M$ is the incidence matrix of the set system $(\Omega, \cS)$, then the definitions agree. Using a clever linear-algebraic argument, Beck and Fiala showed that the discrepancy of a set system $(\Omega, \cS)$ is bounded above by a function of its \emph{maximum degree} $\Delta(\cS) := \max_{x \in \Omega} |\{S \in \cS: x \in S\}|$. If $\Delta(\cS)$ is at most $t$, we say $(\Omega, \cS)$ is \emph{$t$-sparse}. 
\begin{thm}[Beck-Fiala \cite{BF81}]
If $(\Omega, \cS)$ is $t$-sparse, then $\disc(\Omega,\cS) \leq 2t-1$. 
\end{thm}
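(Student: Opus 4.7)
The plan is to use the classical \emph{floating colors} argument of Beck and Fiala. I would relax the target coloring $\chi \in \{-1,+1\}^\Omega$ to a fractional coloring $x \in [-1,1]^\Omega$, initialize $x = 0$, and then perform a sequence of moves that gradually pin each coordinate to $\pm 1$ while preserving $\sum_{i \in S} x_i$ for a carefully chosen collection of ``dangerous'' sets.

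More precisely, at each stage call a coordinate $i$ \emph{floating} if $x_i \in (-1,1)$ and \emph{frozen} otherwise, let $F \subseteq \Omega$ be the set of floating coordinates, and call a set $S \in \cS$ \emph{active} if $|S \cap F| > t$. Double-counting the pairs $(i,S)$ with $i \in F$ and $i \in S$ using $\Delta(\cS) \leq t$ shows that the number $a$ of active sets satisfies $t \cdot a < \sum_{i \in F} |\{S : i \in S\}| \leq t|F|$, hence $a < |F|$. Therefore the linear system that fixes every frozen $x_i$ to its current value and fixes $\sum_{i \in S} x_i$ for every active $S$ has strictly fewer equations than unknowns, so its solution set contains a line through the current $x$ whose direction is supported on the floating coordinates. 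I would move along this line (in either orientation) until some floating coordinate hits $\pm 1$, add it to the frozen set, and recompute the active sets and the linear system.

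The process terminates in at most $|\Omega|$ iterations with $x \in \{-1,+1\}^\Omega$. To bound $|\sum_{i \in S} x_i|$ for a fixed $S \in \cS$, look at the first time $S$ is inactive (possibly the start, in case $|S| \leq t$). At that instant $|S \cap F| \leq t$ and $\sum_{i \in S} x_i = 0$, because $x$ began at $0$ and, while $S$ was active, this sum was held constant by our constraints. Afterwards only the at most $t$ floating coordinates of $S$ can still change, and each moves by strictly less than $2$ before reaching $\pm 1$. Hence the final value satisfies $|\sum_{i \in S} x_i| < 2t$, and since this quantity is an integer we obtain $\disc(\Omega,\cS) \leq 2t - 1$.

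The only delicate point is the strict inequality $a < |F|$; this is what guarantees a nonzero direction in the kernel supported on the floating coordinates, and it is immediate from the fact that active sets contain at least $t+1$ floating elements while each floating element lies in at most $t$ sets. Everything else is bookkeeping.
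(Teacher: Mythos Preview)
Your argument is the classical Beck--Fiala floating-colors proof and is correct as written. Note, however, that the paper does not actually prove this theorem: it is stated in the introduction with a citation to \cite{BF81} as background for the Beck--Fiala and Koml\'os conjectures, and no proof is given anywhere in the text. So there is no ``paper's own proof'' to compare against; your proposal simply supplies the standard argument that the paper takes for granted.
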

Beck and Fiala conjectured that $\disc(\cS)$ is actually $O(\sqrt{t})$ for $t$-sparse set systems $(\Omega, \cS)$. Their conjecture would follow from the following stronger conjecture due to Koml\'os:

\begin{conj}[Koml\'os Conjecture; see \cite{Sp87}]
If every column of $M$ has Euclidean norm at most $1$, then $\disc(M)$ is bounded above by an absolute constant independent of $n$ and $m$.
\end{conj}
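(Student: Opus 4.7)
The final statement is the Koml\'os conjecture, a central open problem in combinatorial discrepancy; the best known general bound is Banaszczyk's $\disc(M)=O(\sqrt{\log n})$. Any proof plan is therefore speculative, and I frame mine in the spirit of the paper's Fourier-analytic approach to random matrices.

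The plan is to combine an iterative partial coloring scheme with a local limit theorem for $M\chi$ over random signs $\chi$. First I would pad the columns so that every column has Euclidean norm exactly $1$, which only changes the problem cosmetically. Then I would iterate a partial coloring step: given a fractional $x\in[-1,1]^n$, find $x'$ agreeing with $x$ on already-frozen coordinates, with $\|M(x'-x)\|_\infty=O(1)$, and with a constant fraction of new coordinates pushed to $\pm 1$. About $O(\log n)$ such rounds suffice, and the per-round contributions add, so the target is an \emph{absolute-constant} discrepancy per round, independent of the surviving dimension. The usual routes to such partial colorings — convex-geometric existence proofs in the style of Giannopoulos and Banaszczyk, or SDP-based algorithms of Bansal and Lovett--Meka — each inject a $\sqrt{\log}$ factor. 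I would instead try to use the local limit theorem technology developed in this paper: when the current residual column set is sufficiently generic, the Fourier-analytic estimates yield that an inverse-polynomial fraction of sign vectors $\chi$ satisfy $\|M\chi\|_\infty=O(1)$, and a pigeonhole / partial-coloring argument then extracts an $O(1)$-discrepancy partial coloring in that round.

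\textbf{Main obstacle.} The local limit theorem requires the columns to behave like i.i.d.\ samples from a bounded lattice random variable, while the Koml\'os conjecture is worst-case. The crux is to identify a genericity invariant — for instance, a quantitative lower bound on coordinate projections of $M$, anti-concentration of partial column sums, or non-degeneracy of the integer span of the columns — that (i) can be arranged at the outset by a preliminary preconditioning (rotation, random sign flip, or reduction to many near-copies), (ii) is preserved by one partial coloring round on the residual columns, and (iii) is strong enough to feed the paper's Fourier bounds. Without such an invariant the iteration degrades to the known $O(\sqrt{\log n})$ bound; with it, one would obtain an absolute constant and settle the conjecture. Making the preconditioning step work simultaneously for all $M$ with unit-norm columns, and propagating the genericity through the coloring, is where I expect all the difficulty to concentrate.
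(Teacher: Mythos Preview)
The paper does not prove this statement: it is explicitly recorded as an open conjecture, with Banaszczyk's $O(\sqrt{\log n})$ bound noted as the current record. So there is no ``paper's own proof'' to compare against, and you have correctly flagged that any plan here is speculative.

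That said, your proposal has a genuine gap, and you have essentially located it yourself. The paper's local limit machinery rests on two features that are specific to the random-column model: (i) the columns are i.i.d.\ draws from a fixed distribution $X$, so that $\E_M[|\widehat{Y_M}(\theta)|]$ factors as $(\E|\cos(\pi\langle X,\theta\rangle)|)^n$ and can be controlled by the spanningness $s(X)$; and (ii) spanningness is a property of the \emph{distribution}, not of any particular realized matrix. Your preconditioning suggestions---rotations, random sign flips, reduction to near-copies---do not create either ingredient. A rotation preserves worst-case structure; random sign flips on the columns of a fixed $M$ still leave $\widehat{Y_M}$ equal to a product of cosines determined by the fixed columns, with no averaging over a column distribution to exploit. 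In particular, an adversary can choose $M$ so that many $\theta$ in the Voronoi cell have $|\widehat{Y_M}(\theta)|$ close to $1$ (e.g.\ columns concentrated near a sublattice), and no sign-flip or rotation removes this.

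The iteration compounds the problem: even if round one could be handled, the residual column set after a partial coloring is adversarially correlated with the coloring chosen, so any genericity you manufactured would not propagate. This is exactly why partial-coloring approaches to Koml\'os stall at $O(\sqrt{\log n})$---each round costs a constant, and $\log n$ rounds are needed. Your plan does not offer a mechanism to beat this, and the ``genericity invariant'' you posit is doing all the work of the conjecture itself.
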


This conjecture is still open. The current record is due to Banaszczyk \cite{Ban98}, who showed $\disc(M) = O(\sqrt{ \log n})$ if every column of $M$ has norm at most $1$. This implies $\disc(\Omega, \cS) = O(\sqrt{t \log n})$ if $(\Omega, \cS)$ is $t$-sparse. 

\subsection{Discrepancy of random matrices.}
Motivated by the Beck-Fiala conjecture, Ezra and Lovett initiated the study of the discrepancy of random $t$-sparse matrices \cite{EL16}. Here, motivated by the Koml\'os conjecture, we study the discrepancy of random $m\times n$ matrices with independent, identically distributed columns.

\begin{qn}\label{qn:random_komlos}Suppose $M$ is an $m\times n$ random matrix with independent, identically distributed columns drawn from a vector random variable that is almost surely of Euclidean norm at most one. Is there a constant $C$ independent of
$m$ and $n$ such that for every $\eps > 0$,  $\disc(M) \leq C$ with probability $1 - \eps$ for $n$ and $m$ large enough?
\end{qn}

The Koml\'os conjecture, if true, would imply an affirmative answer to this question. We focus on the regime
where $n \gg m$, i.e., the number of columns is much larger than the number of rows.

A few results are known in the regime $n = O(m)$. The theorems in this direction actually control the possibly larger \emph{hereditary discrepancy}. Define the hereditary discrepancy $\herdisc(M) $ by 
$$ \herdisc(M) = \max_{Y \subset [n]} \disc(M|_Y),$$
where $M|_Y$ denotes the $m \times |Y|$ matrix whose columns are the columns of $M$ indexed by $Y$.

Clearly $\disc(M) \leq \herdisc(M)$. Often the Koml\'os conjecture is stated with $\disc$ replaced by $\herdisc$.

While the Koml\'os conjecture remains open, some progress has been made for \emph{random $t$-sparse matrices}. To sample a random $t$-sparse matrix $M$, choose each column of $M$ uniformly at random from the set of vectors with $t$ ones and $m-t$ zeroes. Ezra and Lovett showed the following:

\begin{thm}[\cite{EL16}] If $M$ is a random $t$-sparse matrix and $n = O(m)$, then 
$\herdisc (M) = O( \sqrt{t \log t})$
  with probability $1 - \exp( - \Omega(t))$.
\end{thm}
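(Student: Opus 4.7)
The plan is to prove the theorem via the partial-coloring method, with the randomness of $M$ entering only through a high-probability control of row sparsity. I would first establish the structural claim that with probability $1 - \exp(-\Omega(t))$, every row of $M$ has at most $C_1 t$ nonzero entries for an absolute constant $C_1$. Since each column places its $t$ ones at a uniform random $t$-subset of $[m]$ independently across columns, the number of ones in row $i$ is a sum of $n$ independent Bernoulli$(t/m)$ indicators with mean $nt/m = O(t)$ by the hypothesis $n = O(m)$, and a Chernoff bound gives $\Pr[|r_i| > C_1 t] \leq \exp(-\Omega(t))$ for $C_1$ large enough. A union bound over the $m$ rows retains this probability as long as $m \leq \exp(O(t))$; on the resulting good event, every row of every submatrix $M|_Y$ has Euclidean norm at most $\sqrt{C_1 t}$.

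Conditioning on this event, the proof becomes deterministic and applies simultaneously to all $Y$. For a fixed $Y$, I would invoke Beck's entropy lemma (or equivalently the Lovett--Meka algorithm) iteratively: starting from $Y_0 = Y$, one produces partial colorings $\chi_k \in [-1,1]^{|Y_k|}$ that each halve the remaining index set $Y_k$ and satisfy $\|M|_{Y_k}\chi_k\|_\infty \leq R_k$. The entropy method certifies the existence of such a $\chi_k$ as long as $\sum_{i=1}^m \exp\bigl(-R_k^2/(c\|r_i|_{Y_k}\|^2)\bigr) \leq |Y_k|/2$, and the row-norm bound reduces the denominator to $O(t)$ uniformly. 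Choosing $R_k = C_0 \sqrt{t \log(m/|Y_k|)}$ with $C_0$ large makes this valid, and the final $\ell_\infty$ discrepancy of $M|_Y$ is $\sum_k R_k$.

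The chief obstacle is that a naive summation $\sum_k R_k$ across the $O(\log n)$ iterations produces a spurious $(\log n)^{1/2}$ factor, yielding only $O(\sqrt{t \log n})$, essentially the Banaszczyk bound. Closing the gap to $O(\sqrt{t \log t})$ requires exploiting the random structure past the first stage: for random $M$ and $|Y_k| \ll m/t$, the expected number of nonzeros per row in $M|_{Y_k}$ drops below one, so only a concentrated subset of rows have any nonzero entries at all. One then assigns tighter per-row slacks $\lambda_{k,i}$ to this small collection of active rows, so that the effective number of entropy constraints shrinks with $|Y_k|$ and the discrepancy per stage decays geometrically after a constant number of initial rounds. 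Summing this geometric series gives the claimed $O(\sqrt{t \log t})$ bound, while the overall failure probability is dominated by the single row-sparsity event and remains $\exp(-\Omega(t))$. The delicate part of the argument is this per-row calibration of the entropy budget, which is exactly where the random placement of the column supports, combined with the restriction $n = O(m)$, must be leveraged.
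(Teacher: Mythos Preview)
The present paper does not prove this theorem; it is quoted from \cite{EL16} as background, with no argument given. There is no proof here against which to compare your proposal.

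On the proposal itself: the template (condition on row sparsity via Chernoff, then iterate the entropy/partial-coloring lemma) is the right one and matches the Ezra--Lovett strategy. But the step you flag as ``delicate'' is not resolved, and contains a conceptual slip. For \emph{hereditary} discrepancy, $Y$ and hence every $Y_k$ in the recursion are fixed adversarially once $M$ is revealed, so the phrase ``for random $M$ and $|Y_k|\ll m/t$ the expected number of nonzeros per row drops below one'' is not meaningful---there is no residual randomness to average over. What survives deterministically after your event are (i) each row of $M$ has at most $C_1 t$ ones, and (ii) each column has exactly $t$ ones, so $M|_{Y_k}$ has at most $t|Y_k|$ active rows. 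Plugging these into the entropy constraint does give per-stage cost $O(\sqrt{|Y_k|\log t})$ once $|Y_k|<C_1 t$, and those late stages indeed sum geometrically to $O(\sqrt{t\log t})$. The problem is the earlier stages with $|Y_k|\ge C_1 t$: from (i) and (ii) alone each such stage costs at least $\Omega(\sqrt{t})$, and there can be $\Theta(\log(n/t))$ of them, so their total is not $O(\sqrt{t\log t})$ without a further idea. The actual argument in \cite{EL16} controls row degrees across scales more finely than the single event ``every row has $O(t)$ ones,'' and that refinement is precisely the content your outline leaves unspecified.
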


The above does not imply a positive answer to \cref{qn:random_komlos} due to the factor of $\sqrt{\log t}$, but is better than the worst-case bound $\sqrt{t \log n}$ due to Banaczszyk.

We now turn to the regime $n \gg m$. It is well-known that if $\disc(M|_Y) \leq C$ holds for all $|Y| \leq m$, then $\disc(M) \leq  2C$ \cite{AS}. However, this observation is not useful for analyzing random matrices in the regime $n \gg m$. Indeed, if $n$ is large enough compared to $m$, the set of submatrices $M|_Y$ for $|Y| \leq m$ is likely to contain a matrix of the largest possible discrepancy among $t$-sparse $m\times m$ matrices, so improving discrepancy bounds via this observation is no easier than improving the Beck-Fiala theorem. The discrepancy of random matrices when $n \gg m$ behaves quite differently than the discrepancy when $n = O(m)$. For example, the discrepancy of a random $t$-sparse matrix with $n = O(m)$ is only known to be $O(\sqrt{t \log t})$, but it becomes $O(1)$ with high probability if $n$ is large enough compared to $m$.
\begin{thm}[\cite{EL16}]\label{el_thm_2}Let $M$ be a random $t$-sparse matrix.
 If $n = \Omega\left(\binom{m}{t} \log \binom{m}{t}\right)$ then 
 $\disc (M) \leq 2$ with probability $1 - \binom{m}{t}^{-\Omega(1)}$.
\end{thm}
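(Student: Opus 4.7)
My plan is, first, to use a coupon-collector argument to ensure every column type appears many times; second, to reformulate the signing problem as a covering question in an explicit integer lattice $L \leq \Z^m$; and third, to resolve the covering by coordinate-wise rounding, using the richness of $\binom{[m]}{t}$.

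Let $N = \binom{m}{t}$ and, for each $t$-subset $c$ (identified with its $\{0,1\}$-indicator in $\R^m$), let $n_c$ be the number of columns of $M$ equal to $c$. Each $n_c$ is marginally $\mathrm{Bin}(n, 1/N)$, so a Chernoff plus union bound gives $n_c \geq C = C(m,t)$ simultaneously for all $c$, with probability $1 - N^{-\Omega(1)} = 1 - \binom{m}{t}^{-\Omega(1)}$, provided $n = \Omega(N \log N)$; condition on this event. A signing $v \in \{\pm 1\}^n$ is equivalent, up to permuting identical columns, to an integer tuple $y = (y_c)$ with $y_c \in \{-n_c, -n_c+2, \ldots, n_c\}$ (the signed count on copies of type $c$), under which $Mv = \sum_c y_c c$. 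Write $y_c = r_c + 2 w_c$ with $r_c := n_c \bmod 2 \in \{0,1\}$ fixed and $w_c \in \Z$ free. Then $Mv = r + 2 z$, where $r := \sum_c r_c c \in \Z^m$ is fixed and $z := \sum_c w_c c$ ranges over the integer lattice $L := \spn_\Z \{c : c \in \binom{[m]}{t}\}$. A direct computation identifies $L = \{x \in \Z^m : t \mid \sum_i x_i\}$. Hence $\disc(M) \leq 2$ reduces to finding $z \in L$ with $\|r + 2z\|_\infty \leq 2$, plus a mild side condition $|w_c| \leq n_c/2$ for some expansion $z = \sum_c w_c c$.

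To produce $z$, I would round $-r/2 \in \tfrac{1}{2}\Z^m$ coordinate-wise to the nearest integer, giving $z'$ at $\ell_\infty$-distance at most $1/2$ from $-r/2$. If $\sum_i z'_i$ is not divisible by $t$, I would correct by flipping the rounding direction on at most $\lfloor t/2 \rfloor$ coordinates, each flip moving one entry by $\pm 1$ and increasing its distance from $\leq 1/2$ to $\leq 1$ (or from $0$ to $1$ if that target coordinate was already integer). The resulting $z \in L$ satisfies $\|(r/2) + z\|_\infty \leq 1$, i.e.\ $\|r + 2z\|_\infty \leq 2$. The side condition is automatic: a suitable expansion $z = \sum_c w_c c$ has $|w_c|$ bounded by a constant depending only on $m, t$, which is dominated by $n_c/2$ under the coupon-collector event. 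The main obstacle is the degenerate case in which $r$ lacks enough ``cheap-to-flip'' coordinates to carry the required mod-$t$ correction within a single $\pm 1$ movement per flip; I would handle this by using short-norm generators $e_i - e_j \in L$ (arising as differences of two $t$-subsets sharing $t-1$ elements) to redistribute the correction across multiple coordinates while keeping the total $\ell_\infty$-distance to $-r/2$ at most $1$.
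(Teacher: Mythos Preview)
This theorem is quoted from Ezra--Lovett \cite{EL16} and is not proved in the present paper, so there is no proof here to compare against directly. That said, your argument is precisely the quantitative specialization of the paper's own Proposition~\ref{prop:lattice_soft}: once every column type appears at least $C$ times (coupon collector, giving the stated probability bound when $n=\Omega(N\log N)$), one achieves $\disc(M)\le d_\infty(M\mathbf 1,2\cL)$, and for the $t$-sparse lattice $\cL=\{x\in\Z^m: t\mid\sum_i x_i\}$ the paper records $\rho_\infty(\cL)=1$ (Fact~\ref{fact:tsparse}), so this distance is at most $2$. Your reformulation and side-condition handling match that proof essentially verbatim.

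One remark: the ``main obstacle'' you flag is not an obstacle, and the $e_i-e_j$ fix is unnecessary. The clean way to see that every point of $\R^m$ is within $\ell_\infty$-distance $1$ of $\cL$ is to first take $z'_i=\lfloor -r_i/2\rfloor$, so each coordinate is at distance strictly less than $1$; then increasing any chosen subset $S\subset[m]$ of coordinates by $1$ still keeps every distance at most $1$ and shifts $\sum_i z'_i$ by $|S|$. Since $|S|$ ranges over $\{0,1,\dots,m\}$ and $m\ge t$, one of these choices makes the sum divisible by $t$. There is no degenerate case to treat separately.
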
 

\subsection{Discrepancy versus covering radius}

Before stating our results, we describe a simple relationship between the covering radius of a lattice and a certain variant of discrepancy. We'll need a few definitions.

\begin{itemize}
\item  For $S \subseteq \R^m$, let $\spn_\R S$ denote the
linear span of of $S$, and $\spn_\Z S$ denote the integer span of $S$. 
\item A lattice is a discrete subroup of $\R^m$.  Note that the set $\spn_\Z S$ is a subgroup of $\R^m$, but need not be a lattice.
 If $S$ is linearly
independent or lies inside a lattice, $\spn_\Z S$ {\em is} a lattice. Say a lattice in $\R^m$ is \emph{nondegenerate} if $\spn_\R \cL = \R^m$.

\item
For any norm $\| \cdot \|_*$  on $\R^m$, we write $d_{*}(x,y)$ for the associated distance, and
for $S \subseteq \R^m$, $d_*(x,S)$ is defined to be $\inf_{y \in S} d_*(x,y)$.
\item  The {\em  covering radius} $\rho_*(S)$ of a subset $S$ with respect to the norm $\| \cdot \|_*$ is
$\sup_{x \in \spn_{\R}S} d_*(x,S)$ (which may be infinite.)
\item The discrepancy may be defined in other norms than $\ell_\infty$. If $M$ is an $m \times n$ matrix and $\| \cdot \|_*$ a norm on $\R^m$, define the {\em $*$-discrepancy} $\disc_*(M)$ by 
$$\disc_*(M):= \min_{y \in \{\pm 1\}} \| My\|_*.$$   In particular, $\disc(M)$ is $\disc_{\infty}(M)$.
\end{itemize}

A natural relaxation of $*$-discrepancy is the \emph{odd$_*$ discrepancy}: instead of assigning $\pm1$ to the columns, one could minimize $\|M\mathbf y\|_*$ for $\mathbf y$ with odd entries. By writing each odd integer as $1$ plus an even number, it is easy to see that the odd$_*$ discrepancy of $M$ is equal to 
$$d_* (M \mathbf 1, 2 \cL) \leq 2 \rho_*(\cL).$$
where $\cL$ is the lattice generated by the columns of $M$ and $\mathbf 1$ is the all-ones vector. In fact, by standard argument which can be found in \cite{LSV86}, the maximum odd$_*$ discrepancy of a matrix whose columns generate $\cL$ is sandwiched between $\rho_*(\cL)$ and $2 \rho_*(\cL)$. 

In general, $\disc_*(M)$ can be arbitrarily large compared to the odd$_*$ discrepancy of $M$, even for $m=1, n =2$. If $r \in \Z$ then $M = [2r + 1, r]$ has $\rho_*(\spn_\Z M) = 1/2$ but $\disc_*(M) = r + 1$. However, the discrepancy of a \emph{random} matrix with many columns drawn from $\cL$ behaves more like the odd discrepancy.

\begin{prop}\label{prop:lattice_soft}
Suppose $X$ is a random variable on $\R^m$ whose support generates a lattice $\cL$. 
Then for any $\eps>0$, there is an $n_0(\eps)$ so that for $n > n_0(\eps)$,
 a random $m \times n$ matrix with independent
columns generated from $X$ satisfies 
$$ \disc_*( M) \leq d_* (M \mathbf 1, 2 \cL) \leq 2 \rho_*(\cL)$$
with probability at least $1-\eps$.
\end{prop}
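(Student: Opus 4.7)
The deterministic bound $d_*(M\mathbf{1}, 2\cL) \le 2\rho_*(\cL)$ is immediate: since every column of $M$ lies in $\cL$, we have $M\mathbf{1}\in\cL\subseteq\spn_\R\cL$, and by definition the covering radius of $2\cL$ in its real span equals $2\rho_*(\cL)$. The work is therefore to prove $\disc_*(M) \le d_*(M\mathbf{1}, 2\cL)$ with probability at least $1-\eps$.

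I would begin by fixing a finite generating set $v_1,\dots,v_k \in \supp X$ for $\cL$ over $\Z$. Such a set exists because $\cL$ is a finitely generated abelian group (of rank at most $m$) and every element of a $\Z$-basis of $\cL$ is an integer combination of finitely many points of $\supp X$. Each $v_j$ is drawn with positive probability $p_j := \prob(X = v_j) > 0$, so by a Chernoff bound, for $n\ge n_0(\eps)$ the event $E$ that every $v_j$ appears at least $T$ times among the columns of $M$ has probability at least $1-\eps$, for any prescribed threshold $T$ depending only on $(X,\cL,v_1,\dots,v_k)$. On $E$, I would exhibit an explicit witness $y \in \{\pm1\}^n$ realizing the bound.

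To construct $y$: pick $\ell^*\in\cL$ with $\|M\mathbf{1}-2\ell^*\|_* = d_*(M\mathbf{1},2\cL)$, and write $y = \mathbf{1} - 2z$ with $z\in\{0,1\}^n$; it then suffices to find $z$ with $Mz = \ell^*$. Partition the columns into groups $I_j = \{i: c_i = v_j\}$ (of sizes $N_j \ge T$) and a remainder $I_0$. On $I_0$, fix $z_i$ by a pairing scheme (on each group of equal columns, set half of the $z_i$'s to $1$ and half to $0$) to obtain a deterministic contribution $\ell_0 \in \cL$. The condition $Mz = \ell^*$ then reduces to $\sum_{j=1}^k b_j v_j = \ell^*-\ell_0$, where $b_j := \sum_{i\in I_j} z_i \in [0,N_j]\cap\Z$. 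Any two integer representations of an element of $\cL$ by the $v_j$'s differ by an element of the relation lattice $R := \ker\bigl(\Z^k\to\cL,\ (a_j)\mapsto\sum_j a_j v_j\bigr)$, and the bounds $\|\ell^* - M\mathbf{1}/2\|_* \le \rho_*(\cL)$ together with the bounded pairing error make the displacement of $\ell^*-\ell_0$ from $\tfrac{1}{2}\sum_j N_j v_j$ bounded by a constant $C = C(X,\cL,v_1,\dots,v_k)$. A standard covering-radius argument for $R\subseteq\Z^k$ then produces a representative $(a_j)$ of the relevant coset with $|a_j - N_j/2|\le K$ for some $K = K(X,\cL,v_1,\dots,v_k)$. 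Taking $T = 2K$ forces $(a_j)\in\prod_j[0,N_j]$, yielding the required $z$.

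The main obstacle is that $\supp X$ may be infinite: then $I_0$ can contain many distinct column values with odd multiplicity, so the pairing residual in $\ell_0$ may grow unboundedly with $n$, spoiling the displacement bound $C$. I would remedy this by a preliminary truncation step: fix $R_0$ so that $\prob(\|X\|_* > R_0) < \eta\ll\eps$, condition on the high-probability event that at most $2\eta n$ columns of $M$ exceed norm $R_0$, and either enlarge the generating set $\{v_j\}$ to include the bounded high-probability values of $X$ (making $I_0$ small) or pair the rare large columns explicitly before applying the main construction. After truncation, the construction above applies with $T$ and $n_0$ chosen in terms of $\eps,\eta,R_0$.
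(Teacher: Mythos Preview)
Your approach is correct in spirit but considerably more elaborate than the paper's. The paper takes the generating set to be \emph{all} of $S = \supp X$ (tacitly assumed finite), so your $I_0$ is empty and no pairing or relation-lattice argument is needed: it observes that $M\mathbf 1 \bmod 2\cL$ is determined by the set $T \subseteq S$ of support values appearing with odd multiplicity, precomputes for each of the finitely many such $T$ an even integer combination $v_T = \sum_{s\in S}2a_s\,s \in 2\cL$ with bounded coefficients and $\|s_T - v_T\|_* = d_*(s_T, 2\cL)$, and then simply takes $n_0$ large enough that every $s \in S$ appears at least $C+1$ times, so that one can directly choose $y$ with $My = s_T - v_T$. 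Your route buys generality (a proper generating subset) that you do not end up exploiting.

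One technical point in your argument deserves care: the covering-radius step for $R \subset \Z^k$ is underspecified, since $\operatorname{rank} R = k - \operatorname{rank}\cL < k$ and hence the covering radius of $R$ in $\R^k$ is infinite. What makes the step go through is the additional observation that the component of the box center $(N_j/2)_j$ orthogonal to the affine coset $a_0 + \spn_\R R$ is bounded, because its image under the coefficient map $\phi$ is exactly your bounded displacement; only then does the (finite) covering radius of $R$ inside $\spn_\R R$ yield a representative in the box.

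On infinite support: the paper's proof also tacitly assumes $|S| < \infty$, so your concern is well placed but not a deficiency relative to the paper. Your truncation fix, however, is incomplete as stated --- even after conditioning on at most $2\eta n$ columns exceeding norm $R_0$, those columns may have arbitrarily large norm, and neither enlarging $\{v_j\}$ to the bounded part of the support nor ``pairing'' (generically distinct) large columns controls the residual $\tfrac12\sum_{c\in I_0:\,n_c\text{ odd}} c$ that enters your displacement bound.
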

\begin{proof}
Let $S$ be the support of $S$. For every subset $T$ of $S$, let $s_T$ be the sum of the columns of $T$. Let $C$ be large enough that for all $T$, there is an integer combination $v_T$ of elements of $S$ with even coefficients at most $C$ such that $\|v_T - s_T\| \leq d_*(s_T, 2 \cL)$.

Choose $n_0(\eps)$ large enough so that with probability at least $1-\eps$,
if we take $n_0(\eps)$ samples of $X$, every element of $S$ appears at least $C+1$ times. Let $n \geq n(\eps)$ and let $M$ be a random matrix obtained by selecting $n$ columns
according to $X$. With probability at least $1-\eps$ every vector in $S$ appears at least
$C$ times. We claim that if this happens, $\disc_*(M) \leq d_* (M \mathbf 1, 2 \cL).$ This is because if $T$ is the subset of $S$ that appeared an odd number of times in $M$, $d_* (M \mathbf 1, 2 \cL) = d_* (s_T, 2 \cL)$, but because each element of $S$ appears at least $C+1$ times, we may choose $\mathbf y \in \{\pm 1\}^n$ so that $M\mathbf y = s_T - v_T$ for $\|v_T - s_T\| \leq d_*(s_T, 2 \cL)$.
\end{proof}

\subsection{Our results}

The above simple result  says nothing about the number of columns required for $M$ to satisfy
the desired inequality with high probability.  The focus of this paper is on obtaining quantitative upper bounds on the
function $n_0(\varepsilon)$. We will consider the case when $\spn_\Z \supp(X)$ is a lattice $\cL$. The bounds we obtain will be expressed in terms of $m$ and several quantities
associated to the lattice $\cL$, the random variable $X$ and the norm $\|\cdot \|_*$. Without loss of generality, we assume $X$ is symmetric, i.e. $\Pr[X = x] = \Pr[X = -x]$ for all $x$.
For a real number $L > 0$ we write $B(L)$ for the set of points in $\R^m$ of (Euclidean) length at most $L$.

\begin{itemize}
\item The $\|\cdot\|_*$ covering radius $\rho_*(\cL)$.
\item The {\em distortion} $R_*$  of the norm $\|\cdot\|_*$, which is defined to be maximum
Euclidean length of a vector $x$ such that $\|x\|_*=1$. For example, $R_{\infty}=\sqrt{m}$.
\item The determinant $\det \cL$ of the lattice $\cL$, which is the determinant of any matrix whose columns form a basis of $\cL$.
\item The determinant $\det \Sigma$, where $\Sigma=\E[XX^\dagger]$ is the $m \times m$ 
covariance matrix of $X$.
\item The smallest eigenvalue $\sigma$ of $\Sigma.$
\item The maximum Euclidean length $L=L(Z)$ of a vector in the support of $Z=\Sigma^{-1/2}X$.
\item A parameter $s(Z)$ called the \emph{spanningness}.  The definition of this crucial parameter is technical and
is given in \cref{sec:proof_overview}; roughly speaking, it is large if $Z$ is not heavily concentrated near some proper sublattice of $\cL$.
\end{itemize}

We now state our main quantitative theorem about discrepancy of random matrices.  
\begin{thm}[main discrepancy theorem]\label{thm:lattice}
Suppose $X$ is a random variable on a nondegenerate lattice $\cL$. Let $\Sigma:=\E XX^\dagger$ have least eigenvalue $\sigma$. Suppose $\supp X \subset \Sigma^{1/2} B(L)$ and that $\cL = \spn_\Z \supp X$. If $n \geq N$ then
$$ \disc_*( M) \leq d_* (M \mathbf 1, 2 \cL) \leq 2 \rho_*(\cL)$$
with probability at least
$$1 - O\left( L\sqrt{\frac{\log n}{n}}\right).$$ Here $N$, given by \cref{eq:ndisc} in \cref{sec:local}, is a polynomial in the quantities $m$, $s(\Sigma^{-1/2} X)^{-1}$, $L$, $R_*$, $\rho_*(\cL)$, and $\log \left(\det \cL/\det \Sigma\right)$.
\end{thm}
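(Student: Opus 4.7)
My plan is to prove \cref{thm:lattice} by establishing a pointwise local limit theorem for the random vector $My$ with $y$ uniform on $\{-1,1\}^n$ (conditional on $M$ being ``typical''), and using it to show that every target point in the coset $M\mathbf{1}+2\cL$ within a suitable Euclidean ball of $0$ is attained as $My$ for some $y$. This is in the Kuperberg--Lovett--Peled spirit.

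First I would reduce. For any $y\in\{-1,1\}^n$ we have $My - M\mathbf{1} = 2M\bigl((y-\mathbf{1})/2\bigr)\in 2\cL$, so $My\in M\mathbf{1}+2\cL$ always. Let $v^*\in 2\cL$ be a closest point to $M\mathbf{1}$ in $\|\cdot\|_*$ and set $u:=M\mathbf{1}-v^*$; then $u\in M\mathbf{1}+2\cL$ with $\|u\|_* = d_*(M\mathbf{1},2\cL)\leq 2\rho_*(\cL)$. It therefore suffices to prove that on an event of probability $1-O(L\sqrt{\log n/n})$ in $M$, one has $\Pr_y[My=u]>0$, for then some deterministic $y$ witnesses $\disc_*(M)\leq \|u\|_*\leq 2\rho_*(\cL)$.

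To estimate $\Pr_y[My=v]$ for $v\in M\mathbf{1}+2\cL$, I would use Fourier inversion on the lattice $2\cL$:
$$\Pr_y[My=v] \;=\; \frac{\det(2\cL)}{(2\pi)^m}\int_{D^*}\phi_M(\xi)\,e^{-i\langle\xi,v\rangle}\,d\xi,\qquad \phi_M(\xi) \;=\; \prod_{j=1}^n\cos\langle\xi,X_j\rangle,$$
where $X_1,\ldots,X_n$ are the columns of $M$ and $D^*\subset\R^m$ is a fundamental domain of $\pi\cL^*$ (the frequency lattice of $2\cL$). I would split $D^*$ into a small Euclidean ball $B_\delta$ about $0$ and its complement. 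On $B_\delta$, the expansion $\log\cos\theta = -\theta^2/2 + O(\theta^4)$ gives $\phi_M(\xi)\approx\exp\bigl(-\tfrac12\xi^\dagger(MM^\dagger)\xi\bigr)$; matrix concentration (using $\|X\|\leq L\|\Sigma\|^{1/2}$ a.s.) shows $MM^\dagger = (1\pm o(1))\,n\Sigma$ with probability $1-O(L\sqrt{\log n/n})$, so the $B_\delta$-contribution is approximately $\det(2\cL)\cdot(2\pi n)^{-m/2}(\det\Sigma)^{-1/2}e^{-O(\|v\|^2/n)}$. Outside $B_\delta$, the spanningness parameter $s(Z)$ of $Z:=\Sigma^{-1/2}X$ is designed to force $\E[1-\cos^2\langle\xi,X\rangle]$ to be bounded below, and a law-of-large-numbers argument would upgrade this to $|\phi_M(\xi)|\leq(1-c\,s(Z)^2)^n$ on a second high-probability event; after accounting for the normalization $\vol(D^*)=\pi^m/\det\cL$, the complementary contribution is at most $(1-c\,s(Z)^2)^n$.

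Combining the two estimates, on a typical event one obtains
$$\Pr_y[My=u]\;\geq\; \frac{c_1\det(2\cL)}{(n\det\Sigma)^{m/2}}\exp\bigl(-O(\|u\|^2/n)\bigr) \;-\; (1-c\,s(Z)^2)^n,$$
which is strictly positive once $n\geq N$ is polynomial in $s(Z)^{-1}$, $m$, $L$, $R_*$, $\rho_*(\cL)$, and $\log(\det\cL/\det\Sigma)$; here we use $\|u\|\leq R_*\|u\|_*\leq 2R_*\rho_*(\cL)$ to keep the Gaussian factor $\Omega(1)$. The main obstacle I anticipate is the off-origin Fourier estimate: one must quantitatively connect the technical spanningness parameter $s(Z)$ to a uniform lower bound on $1-|\phi_M(\xi)|$ over all of $D^*\setminus B_\delta$ and then upgrade it into an exponential decay in $n$ that uniformly beats the lattice/covariance factor $\det(2\cL)/(\det\Sigma)^{1/2}$ produced by Fourier inversion---all while keeping the threshold $N$ polynomial in the stated parameters.
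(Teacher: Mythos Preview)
Your overall strategy---Fourier inversion on the column lattice, splitting the frequency domain into a small ball and its complement, Gaussian comparison near the origin via the Taylor expansion of $\log\cos$ together with matrix concentration, and spanningness to kill the far-from-origin contribution---is exactly the approach taken in the paper. The reduction you describe (pick $u\in M\mathbf 1+2\cL$ with $\|u\|_*\le 2\rho_*(\cL)$ and show $\Pr_y[My=u]>0$) is also the paper's, modulo the cosmetic change that the paper works with $y\in\{\pm 1/2\}^n$ and the coset $\cL-\tfrac12 M\mathbf 1$, and first linearly transforms to make $X$ isotropic before proving the local limit theorem.

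The one place where your outline diverges substantively from the paper is precisely the step you flag as the main obstacle: the off-origin Fourier estimate. You propose to upgrade the spanningness lower bound on $\E[1-\cos^2\langle\xi,X\rangle]$ to a \emph{uniform} pointwise bound $|\phi_M(\xi)|\le (1-c\,s(Z)^2)^n$ over all $\xi\in D^*\setminus B_\delta$, via ``a law-of-large-numbers argument.'' This is the delicate step, because you would need concentration uniformly over an uncountable family of $\xi$, and a naive net argument would require Lipschitz control of $\phi_M$ that depends on the random columns. The paper avoids this entirely: it does \emph{not} seek uniform pointwise control of $\phi_M$, but instead bounds the whole integral $I_3=\int_{D\setminus B(\eps)}|\widehat{Y_M}(\theta)|\,d\theta$ in expectation over $M$. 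By Fubini and independence of the columns,
\[
\E_M[I_3]=\int_{D\setminus B(\eps)}\bigl(\E|\cos(\pi\langle X,\theta\rangle)|\bigr)^n\,d\theta
\le \vol(D)\cdot\sup_{\theta\in D\setminus B(\eps)}\bigl(1-4\tilde X(\theta)^2\bigr)^n
\le \det(\cL^*)\,e^{-2\eps^2 n},
\]
where the last inequality is exactly where spanningness enters (taking $\eps\le s(X)$). A single application of Markov's inequality then gives the high-probability bound on $I_3$. This Fubini-then-Markov trick is the missing idea in your sketch; once you use it, the polynomial threshold $N$ in the stated parameters falls out by choosing $\eps\asymp L^{-1/2}n^{-1/4}$ and balancing the three terms.
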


\begin{rem}[degenerate lattices]Our assumption that $\cL$ is nondegenerate is without loss of generality; if $\cL$ is degenerate, we may simply restrict to $\spn_\R \cL$ and apply \cref{thm:lattice}. Further, the assumptions that $\cL = \spn_\Z \supp X$ and $\cL$ is nondegenerate imply $\sigma > 0$. \end{rem}

\begin{rem}[weaker moment assumptions] Our original motivation, the K\'omlos conjecture, led us to study the case when the random variable $X$ is bounded. This assumption is not critical.
We can prove a similiar result under the weaker assumption that $(\E \|X\|_2^\eta)^{1/\eta} = L< \infty$ for some $\eta > 2$. The proofs do not differ significantly, so we give a brief sketch in \cref{sec:moments}.
\end{rem}

Obtaining bounds on the spanningness is the most difficult aspect of applying \cref{thm:lattice}. We'll do this for random $t$-sparse matrices, for which we extend \cref{el_thm_2} to the regime $n = \Omega( m^3 \log^2 m)$. For comparison, \cref{el_thm_2} only applies for $n \gg \binom{m}{t} $, which is superpolynomial in $m$ if $\min(t, m-t) = \omega(1)$.

\begin{thm}[discrepancy for random $t$-sparse matrices]\label{thm:tsparse} Let $M$ be a random $t$-sparse matrix. If $n =\Omega(m^3 \log^2 m)$ then 
$$\disc(M) \leq 2$$
with probability at least $1 - O\left(\sqrt{\frac{m\log n}{n}}\right).$
\end{thm}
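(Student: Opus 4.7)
The plan is to invoke \cref{thm:lattice} for a symmetrization of the uniform $t$-sparse distribution. Multiplying each column by an independent uniform sign $\epsilon \in \{\pm 1\}$ does not change the distribution of $\disc(M)$ (we may absorb the signs into the $\pm 1$ vector), so it suffices to handle $X$ equal to $\epsilon Y$ with $Y$ uniform on $\{0,1\}$-vectors with exactly $t$ ones. The covariance is then $\Sigma = \E[YY^\dagger] = aI + bJ$ with $a = t(m-t)/(m(m-1))$ and $b = t(t-1)/(m(m-1))$, whose eigenvalues are $t^2/m$ (on $\mathbf 1$) and $\sigma = a$ (on $\mathbf 1^\perp$). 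A direct calculation using $\|y\|_2^2 = t$ and $\langle y, \mathbf 1 \rangle = t$ for $t$-sparse $y$ yields $\|\Sigma^{-1/2} y\|_2^2 = 1 + (m-1) = m$, so we may take $L = \sqrt m$. The integer span of the support is $\cL = \{v \in \Z^m : t \mid \sum_i v_i\}$ with $\det \cL = t$, and its $\ell_\infty$ covering radius is at most $1$: given $x \in \R^m$, choosing $v_i \in \{\lfloor x_i \rfloor, \lceil x_i \rceil\}$ keeps $\|v-x\|_\infty \leq 1$, and the $m+1$ consecutive integer values achievable for $\sum_i v_i$ cover every residue modulo $t$ once $m \geq t-1$. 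Hence $2\rho_\infty(\cL) \leq 2$, so the conclusion $\disc(M) \leq 2\rho_\infty(\cL) \leq 2$ of \cref{thm:lattice} is exactly the desired discrepancy bound, and its failure probability $O(L\sqrt{\log n/n}) = O(\sqrt{m \log n/n})$ already matches the statement.

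It remains to show that the threshold $N$ in \cref{thm:lattice} can be taken as $O(m^3 \log^2 m)$. Every parameter except the spanningness is polynomial in $m$: $R_\infty = \sqrt m$, $\rho_\infty(\cL) \leq 1$, $L = \sqrt m$, and $\log(\det \cL / \det \Sigma) = O(m \log m)$ from the eigenvalue product of $\Sigma$. The main task is therefore to lower-bound the spanningness $s := s(\Sigma^{-1/2} X)$ by a sufficiently large inverse polynomial in $m$. I plan to do this via Fourier analysis, showing that $|\E \exp(2\pi i \xi \cdot \Sigma^{-1/2} X)|$ is bounded away from $1$ for every nonzero frequency $\xi$ in the dual of $\Sigma^{-1/2} \cL$. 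The key observation is that pairs of $t$-sparse vectors differing by a single swap $e_i \leftrightarrow e_j$ contribute Fourier phases differing by a linear expression in the coordinates of $\Sigma^{-1/2}\xi$; this forces substantial cancellation unless all pairwise differences $(\Sigma^{-1/2}\xi)_i - (\Sigma^{-1/2}\xi)_j$ are very close to integers. Splitting into a low-frequency regime (handled by a moment bound $|\E \exp(2\pi i \xi \cdot \Sigma^{-1/2} X)| \leq 1 - \Omega(\|\xi\|^2)$) and a high-frequency regime (handled by the swap argument, together with the fact that the swap vectors $e_i - e_j$ generate a full-rank sublattice of $\cL$) should yield $s = \Omega(m^{-\alpha})$ for a small constant $\alpha$, which makes the polynomial $N$ from \cref{thm:lattice} fit within $O(m^3 \log^2 m)$.

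The main obstacle is this spanningness bound. Everything else reduces to elementary linear algebra on the covariance and to lattice geometry, but controlling $s$ requires delicate Fourier-analytic control over the $t$-sparse distribution on $\cL$. Once the bound is in hand, substituting all parameters into the polynomial expression for $N$ in \cref{thm:lattice} and absorbing logarithmic factors into $\log^2 m$ gives the claimed threshold $n = \Omega(m^3 \log^2 m)$ together with the failure probability $O(\sqrt{m \log n / n})$.
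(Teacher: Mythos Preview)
Your overall strategy coincides with the paper's: invoke \cref{thm:lattice} (equivalently \cref{thm:lattice_disc}) for the $t$-sparse distribution, compute $\Sigma$, $L=\sqrt m$, $\sigma$, $\rho_\infty(\cL)\le 1$, $\det\cL=t$, and observe that the failure probability $O(L\sqrt{\log n/n})$ becomes $O(\sqrt{m\log n/n})$. All of these computations are correct and match \cref{fact:tsparse} and the paper's proof. The only substantive task, as you correctly identify, is the spanningness lower bound.

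Here, however, there is a genuine gap. You write that your Fourier/swap argument ``should yield $s=\Omega(m^{-\alpha})$ for a small constant $\alpha$,'' but the value of $\alpha$ is decisive: the term $s^{-4}L^{-2}$ in $N_0$ equals $m^{4\alpha-1}$, and you need this to be $O(m^3\log^2 m)$, i.e.\ $\alpha\le 1$. This is not automatic. The generic KLP-type bound (the paper's \cref{lem:lkp}) applied to the $t$-sparse distribution gives only spreading of order $(m\log m)^{-3/2}$, hence spanningness of the same order, which is \emph{not} enough; it would force $n=\Omega(m^5)$ rather than $m^3$. The paper therefore proves the sharper bound $s(\Sigma^{-1/2}X)=\Omega(m^{-1})$ in \cref{lem:tspanning}, and this requires real work beyond the sketch you give.

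The paper's route to this bound is also structurally different from your direct Fourier sketch. It uses the two-part framework of \cref{sec:span}: first it shows $X$ is $4$-bounded by exhibiting an explicit integral spanning set of $\ker A_X$ built from vectors $e_{S'}-e_S+e_T-e_{T'}$ with $S'\to_j S$ and $T'\to_j T$ (\cref{4bd}, \cref{spanning}); second it shows $X$ is $\Omega(m^{-1})$-spreading by a randomized-coefficient argument (\cref{sparsespread}) that writes $1_{[t]}$ as a random integer combination of the $1_S$ with small expected coefficients. Your swap intuition is pointing in the right direction---swaps appear in both pieces---but as stated your proposal does not establish the needed $\alpha=1$ and would not close the argument without essentially reproducing these lemmas.
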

\begin{rem}
We refine this theorem later in \cref{thm:tsparse2} of \cref{sec:tsparse} to prove that the discrepancy is, in fact, usually $1$. 
\end{rem}
\refstepcounter{const}\label{const:disc1}
Using analogous techniques to the proof of \cref{thm:lattice}, we also prove a similar result for a non-lattice distribution, namely the matrices with random unit vector columns.
\begin{thm}[random unit vector discrepancy]\label{thm:unit_disc}
Let $M$ be a matrix with i.i.d random unit vector columns. If $n = \Omega(m^3 \log^2m),$
then $$\disc M = O(e^{-\sqrt{\frac{n}{m^3}}})$$
with probability at least $1 - O\left(L\sqrt{\frac{\log n}{n}}\right)$.
\end{thm}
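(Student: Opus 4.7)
The plan is to mirror the proof of \cref{thm:lattice}, replacing the lattice-valued local limit theorem with a continuous one. The unit vector distribution on $S^{m-1}$ is rotationally invariant, so $\Sigma = \E[X X^\dagger] = \tfrac{1}{m}I$, giving $\sigma = 1/m$, a whitened variable $Z = \Sigma^{-1/2}X = \sqrt{m}\, X$ of Euclidean length exactly $L=\sqrt m$, $R_\infty = \sqrt m$, and $\det \Sigma = m^{-m}$. The analogue of the spanningness parameter for this rotation-invariant, continuously-supported distribution follows from the characteristic function of $Z$, which is a Bessel function of order $(m-2)/2$ with well-controlled decay. Plugging these parameters into the bound $N$ from \cref{thm:lattice} produces the hypothesis $n = \Omega(m^3 \log^2 m)$, with failure probability $O(L\sqrt{\log n/n}) = O(\sqrt{m\log n/n})$ coming from the same concentration step.

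Next, I would establish a local limit theorem for the conditional distribution of $My$, with $y \in \{-1,1\}^n$ uniform and $M$ fixed. With high probability over $M$, the characteristic function
\[
\E_y\!\left[e^{i \langle \xi, My\rangle}\right] = \prod_{j=1}^n \cos\langle \xi, M_j\rangle
\]
is close to the Gaussian characteristic function $e^{-\tfrac{1}{2}\xi^\dagger (n\Sigma) \xi}$ on an appropriate range of frequencies; this is the same Fourier-analytic estimate that drives \cref{thm:lattice}. Inverting and integrating against the indicator of the $\ell_\infty$-ball $B_r = \{v : \|v\|_\infty \le r\}$ yields, for $r$ above a certain threshold,
\[
\Pr_y\!\left[\|My\|_\infty \le r\right] \;\gtrsim\; (2r)^m (2\pi n/m)^{-m/2}
\]
up to an additive error from the tail of the Fourier integral. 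Since $\Pr_y$ only takes values in $\{k/2^n : k \in \Z_{\geq 0}\}$, any positive lower bound produces an actual $y \in \{-1,1\}^n$ with $\|My\|_\infty \le r$, i.e., $\disc(M) \le r$.

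The bound $r = O(e^{-\sqrt{n/m^3}})$ then arises as the balance point between the Gaussian main term $(2r)^m (m/n)^{m/2}$ and the error term of the LLT: smaller $r$ forces the main term below the error, while larger $r$ is wasteful. The exponent $\sqrt{n/m^3}$ reflects the dependencies on $\sigma = 1/m$, the distortion $R_\infty = \sqrt m$, and the continuous spanningness of $Z$.

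The main obstacle I expect is carrying out the Fourier tail estimate at scale $r \ll 1$, because this forces the Fourier cutoff $|\xi| \lesssim 1/r$ to be very large compared to the natural scale $\sqrt n$. On this range one must combine two ingredients: a pointwise bound on the characteristic function of a single random unit vector (polynomially decaying in $|\xi|$), and a uniform concentration inequality over $M$ showing that the empirical product $\prod_j \cos\langle \xi, M_j\rangle$ does not rebound at high frequencies. Balancing these to show that the tail integral is dominated by the Gaussian main term is the technical heart of the argument and is what ultimately determines the $m^3$ in the exponent.
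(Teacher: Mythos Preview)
Your high-level plan (Fourier-analytic local limit theorem, then show the Gaussian main term dominates the error for a suitable scale $r$) matches the paper's, but there is a genuine gap at exactly the point you flag as the ``main obstacle'': the choice of test function and the high-frequency tail.

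The paper does \emph{not} test against the raw indicator $1_{[-r,r]^m}$. It tests against
\[
B \;=\; \frac{1}{(2K)^{2m}}\, 1_{[-K,K]^m} * 1_{[-K,K]^m},
\]
so that $\widehat{B}$ is the \emph{square} of a product of sinc functions. By Plancherel, $\|\widehat{B}\|_1 = (2K)^{-m} < \infty$, while for the raw indicator $\|\widehat{1_{[-r,r]^m}}\|_1 = \infty$ already in dimension one. This finiteness is exactly what makes the tail term manageable: taking expectation over $M$ first, one has the uniform bound $\E_M|\widehat{Y_M}(\theta)| \le e^{-2\varepsilon^2 n}$ for $\theta \notin B(\varepsilon)$ from the spanningness estimate, and then simply
\[
\E_M\!\left[\int_{\R^m\setminus B(\varepsilon)} |\widehat{B}(\theta)|\,|\widehat{Y_M}(\theta)|\,d\theta\right] \;\le\; \|\widehat{B}\|_1\, e^{-2\varepsilon^2 n}.
\]
With the raw indicator this integral diverges, and your proposed fix---a uniform-in-$\xi$ concentration bound showing that $\prod_j \cos\langle \xi, M_j\rangle$ ``does not rebound'' for a \emph{fixed} $M$---does not hold in the form you need: for any fixed column set the product has $\limsup 1$ along suitable directions, and the Bessel decay you invoke is a property of $\E_X[e^{i\langle\xi,X\rangle}]$, not of $|\cos\langle\xi,x\rangle|$ for fixed $x$.

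This also changes where the headline bound comes from. The scale $K = e^{-\sqrt{n/m^3}}$ is \emph{not} the balance between the Gaussian mass $(2K)^m(m/n)^{m/2}$ and the LLT error in the way you describe. It is dictated by the requirement $\|\widehat{B}\|_1\, e^{-\varepsilon^2 n} \ll 1$ with $\varepsilon^2 n \asymp \sqrt{n}/L = \sqrt{n/m}$, i.e.\ $m\log(1/K) \lesssim \sqrt{n/m}$, which gives $K \gtrsim e^{-\sqrt{n/m^3}}$. Once $K$ is chosen in this range, the comparison $\E[B(Y)] > $ (error) is immediate because $e^{-16K^2 m/n}$ is essentially $1$.

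Two smaller points: the spanningness input here is the constant lower bound $s(\sqrt{m}\,X) \ge c$, which the paper proves by direct analysis of the one-dimensional marginal (not via Bessel asymptotics), and the condition $n = \Omega(m^3\log^2 m)$ comes from the $m^2 L^2 (\log m + \log L)^2$ term with $L=\sqrt{m}$; your derivation of the parameters is otherwise correct.
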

\refstepcounter{const}\label{cubeconst}
\refstepcounter{const}\label{expconst}

One might hope to conclude a positive answer to \cref{qn:random_komlos} in the regime $n \gg m$ from \cref{thm:lattice}.  This seems to require the following weakening of the Koml\'os conjecture: 

\begin{conj}\label{qn:lattice_komlos}
There is an absolute constant $C$ such that for any lattice $\cL$ generated by unit vectors, $\rho_\infty(\cL) \leq C$.
\end{conj}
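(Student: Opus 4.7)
The plan is to reduce \cref{qn:lattice_komlos} to a fractional rounding variant of the Koml\'os problem. Given a lattice $\cL$ generated by unit vectors $v_1, \ldots, v_n$, form the matrix $M = [v_1 \mid \cdots \mid v_n]$ whose columns generate $\cL$. For any $x \in \spn_\R \cL$ write $x = M\alpha$ with $\alpha \in \R^n$, and decompose $\alpha = \beta + \gamma$ with $\beta \in \Z^n$ and $\gamma \in [0,1)^n$. Then for any $z \in \{0,1\}^n$ the lattice point $y := M(\beta + z) \in \cL$ satisfies $\|x - y\|_\infty = \|M(\gamma - z)\|_\infty$. So it suffices to show that for every $\gamma \in [0,1)^n$ there is a rounding $z \in \{0,1\}^n$ with $\|M(\gamma - z)\|_\infty \leq C$. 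By the Lov\'asz--Spencer--Vesztergombi rounding theorem this is implied by an absolute bound on $\herdisc_\infty(M)$, i.e.\ by the hereditary form of the Koml\'os conjecture applied to the particular matrix $M$.

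A natural intermediate step is to apply Banaszczyk's $O(\sqrt{\log n})$ bound, yielding $\rho_\infty(\cL) = O(\sqrt{\log n})$ for any unit-vector generating set of size $n$. Since $\rho_\infty(\cL)$ is an invariant of $\cL$ alone, one can try to minimize $n$ over all unit-vector generating sets; but even in favorable cases this only reduces $n$ to the kissing number, which may be exponential in $m$, so this alone cannot remove the $\sqrt{\log n}$ factor. A more promising route is via transference: Banaszczyk's theorem gives $\rho_2(\cL) \cdot \lambda_1(\cL^*) = O(m)$, and $\rho_\infty \leq \rho_2$, so a constant bound would follow from $\lambda_1(\cL^*) \gtrsim m$. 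One would then argue that a lattice possessing many unit vectors forces its dual to have large successive minima, perhaps via the geometry of the Voronoi cell of $\cL$ or via the packing--covering duality. An alternative route, more faithful to the spirit of the present paper, would be to restrict attention to lattices generated by finitely many \emph{symmetric} unit vectors and apply the Fourier-analytic local limit methods developed here: if $X$ is the uniform distribution on the generating set, then \cref{thm:lattice} gives $\disc_\infty(M) \leq 2\rho_\infty(\cL)$ with high probability for random $M$, which one could try to leverage by running the argument backwards to deduce structural constraints on $\rho_\infty(\cL)$.

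The principal obstacle is that \cref{qn:lattice_komlos}, while formally weaker than Koml\'os, seems to require comparable insight: by the sandwich $\rho_*(\cL) \leq \max_M \text{odd}_* \disc(M) \leq 2\rho_*(\cL)$ discussed earlier in the excerpt, an absolute bound on $\rho_\infty(\cL)$ for unit-vector lattices is equivalent to a uniform bound on the odd discrepancy of every unit-vector matrix whose columns generate a lattice. The hardest case will be lattices whose unit-vector generators are in near-general position, where neither algebraic symmetry nor the presence of a nice basis can be exploited, and where one cannot get away with a purely volumetric (Minkowski-type) argument. A reasonable intermediate milestone would be to establish the conjecture for structured sub-families (root lattices, totally unimodular generators, or lattices generated by vectors from a fixed finite symmetric set), before attacking the general case.
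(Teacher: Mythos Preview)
The statement you are attempting to prove is \emph{Conjecture}~\ref{qn:lattice_komlos}: the paper does not prove it, and in fact reiterates it verbatim in the open-problems section as an unresolved question. There is therefore no ``paper's own proof'' to compare against.

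Your proposal is not a proof either, and you seem aware of this: it is a survey of possible attack lines together with an honest assessment of the obstacles. A few remarks on the individual strategies:

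\textbf{Reduction to hereditary Koml\'os.} Your first paragraph is a clean and correct reduction: writing $x=M\alpha$, splitting off the integer part, and invoking Lov\'asz--Spencer--Vesztergombi shows that $\rho_\infty(\cL)\le 2\,\herdisc_\infty(M)$ for any unit-column matrix $M$ whose columns generate $\cL$. But this reduces the conjecture to the full hereditary Koml\'os conjecture, which is at least as hard; you have shown an implication, not made progress.

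\textbf{Transference.} The idea of using $\rho_2(\cL)\,\lambda_1(\cL^*)=O(m)$ and then arguing $\lambda_1(\cL^*)\gtrsim m$ cannot work as stated. Already for $\cL=\Z^m$, which is generated by unit vectors, one has $\lambda_1(\cL^*)=1$, not $\gtrsim m$. The bound $\rho_\infty\le\rho_2$ loses a factor of $\sqrt{m}$ exactly in the easy cases, so transference in the $\ell_2$ norm is too blunt here.

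\textbf{Running \cref{thm:lattice} backwards.} This is circular: the theorem's conclusion is $\disc_*(M)\le 2\rho_*(\cL)$, so it \emph{consumes} a bound on $\rho_\infty(\cL)$ rather than producing one.

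\textbf{Your concluding assessment is the right one.} The sandwich $\rho_\infty(\cL)\le \max_M \mathrm{odd}_\infty\disc(M)\le 2\rho_\infty(\cL)$ from the paper shows that \cref{qn:lattice_komlos} is equivalent to a uniform bound on the odd-$\ell_\infty$ discrepancy of every unit-column matrix whose columns span a lattice. This is a genuine relaxation of Koml\'os (odd coefficients instead of $\pm1$), but no one currently knows how to exploit that extra freedom. The paper leaves it open for precisely this reason.
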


\subsection{Proof overview}\label{sec:proof_overview}

In what follows we focus on the case when $X$ is isotropic, because we may reduce to this case by applying a linear transformation. The discrepancy result for the isotropic case, \cref{thm:identity_discrepancy}, is stated in \cref{sec:local}, and \cref{thm:lattice} is an easy corollary. We now explain how the parameters in \cref{thm:lattice} arise.

The theorem is proved via \emph{local central limit theorems} for sums of vector random variables.
Suppose $M$ is a fixed $m \times n$ matrix with bounded columns and consider the distribution over $M\mathbf v$ where $\mathbf v$ is chosen uniformly at random from $(\pm 1)^n$.   Multidimensional versions of the central limit theorem imply that this distibution is approximately
normal.  We will be interested in local central limit theorems, which provide precise estimates on the probability that
$M\mathbf v$ falls in a particular region.  By applying an appropriate local limit theorem to a region around the origin, we hope to
show that the probability of being close to the origin is strictly positive, which implies that there is a $\pm 1$ assignment of small discrepancy.

We do not know suitable local limit theorems that work for all matrices $M$.  We will consider random matrices of the form $M=M^X(n)$, where $X$ is a random variable taking values in some lattice $\cL \subset \R^m$,
and $M^X(n)$ has $n$ columns selected independently according to $X$.   We will show that, for suitably large $n$
(depending on the distribution $X$), such a random matrix will, with high probability, satisfy a local limit
theorem. The relative error in the local limit theorem
will decay with $n$, and our bounds will provide quantitative information on this decay rate.  
In order to understand our bounds, it helps to understand what properties of $X$ cause
the error to decay slowly with $n$.

We'll seek local limit theorems that compare $\Pr_{\mathbf y}[M \mathbf y = w]$ to something proportional to $e^{- \frac{1}{2} w^\dagger (M M^\dagger)^{-1} w}$. One cannot expect such precise control if the lattice is very fine. If the spacing tends to zero, we approach the situation in which $X$ is not on a lattice, in which case the probability of expressing any particular element could always be zero! In fact, in the nonlattice situation the covering radius can be zero but the discrepancy can typically be nonzero. For this reason our bounds will depend on $\log (\det \cL)$ and on $L$.

We also need $\rho_*(\cL)$ and the distortion $R_*$ to be small in order to ensure $e^{- \frac{1}{2} w^\dagger (M M^\dagger)^{-1} w}$ is not too small for some vector $w$ that we want to show is hit by $My$ with positive probability over $y$.


Finally, we need that $X$ does not have most of its mass on or near a smaller sublattice $\cL'$. This is the role of spanningness, which is analogous to the spectral gap for Markov chains. Since we assume $X$ is symmetric, choosing the columns $M$ and then choosing $y$ at random is the same as adding $n$ identically distributed copies of $X$. Intuitively, this means that if $M$ is likely to have $My$ distributed according to a lattice Gaussian, then the sum of $n$ copies of $X$ should also tend to the lattice Gaussian on $\cL$. If the support of $X$ is contained in a smaller lattice $\cL'$, then clearly $X$ cannot obey such a local central limit theorem, because sums of copies of $X$ are also contained in $\cL'$. In fact, this is essentially the only obstruction up to translations. We may state the above obstruction in terms of the $\emph{dual lattice}$ and the Fourier transform of $X$.

\begin{defin}[dual lattice]\label{defin:dual} If $\cL$ is a lattice, the \emph{dual lattice} $\cL^*$ of $\cL$ is the set 
$$\cL^*= \{z: \langle z, \lambda \rangle \in \Z \textrm{ for all } \lambda \in \cL\}.$$

\end{defin}
The Fourier transform $\widehat{X}$ of $X$ is the function defined on $\theta \in \R^m$ by $\widehat{X}(\theta) = \E [\exp(2 \pi i \langle X, \theta \rangle )]$. Note that $|\widehat{X}(\theta)|$ is always $1$ for $\theta \in \cL^*$. In fact, if $|\widehat{X}(\theta)| =1$ also \emph{implies} that $\theta \in \cL^*$, then the support of $X$ is contained in no (translation of a) proper sublattice of $\cL$! This suggests that, in order to show that a local central limit theorem holds, it is enough to rule out vectors $\theta$ outside the dual lattice with $|\widehat{X}(\theta)| = 1$. 

In this work, the obstructions are points $\theta$ far from the dual lattice with $\E [|\langle \theta, X \rangle \bmod 1|^2]$ small, where $y \bmod 1$ is taken in $(-1/2, 1/2]$. 
However, we know that for $\theta$ very close to the dual lattice we have $|\langle \theta, x \rangle \bmod 1|^2 = |\langle \theta, x \rangle |^2$ for all $x \in \supp X$, so $\E [|\langle \theta, X \rangle \bmod 1|^2]$ is exactly $d(\theta, \cL^*)^2$. The spanningness measures the value of $\E [|\langle \theta, X \rangle \bmod 1|^2]$ where this relationship breaks down.

\begin{defin}[Spanningness for isotropic random variables]\label{defin:spanningness}

Suppose that $Z$ is an isotropic random variable defined on the lattice $\cL$. Let 
$$\tilde{Z}(\theta):= \sqrt{\E [|\langle \theta, Z \rangle \bmod 1|^2]},$$
where $y \bmod 1$ is taken in $(-1/2, 1/2]$, and say $\theta$ is \emph{pseudodual} if $\tilde{Z}(\theta) \leq d(\theta, \cL^*)/2$. Define the \emph{spanningness} $s(Z)$ of $Z$ by 
$$ s(Z) := \inf_{\cL^* \not\ni \;\theta \textrm{ pseudodual}} \tilde{Z}(\theta).$$ It is a priori possible that $s(Z) = \infty$.

\end{defin}

Spanningness is, intuitively, a measure of how far $Z$ is from being contained in a proper sublattice of $\cL$. Indeed, $s(Z) = 0$ if and only if this the case. Bounding the spanningness is the most difficult part of applying our main theorem. Our spanningness bounds for $t$-sparse random matrices use techniques from the recent work of Kuperberg, Lovett and Peled \cite{LKP12}, in which the authors proved local limit theorems for $My$ for non-random, highly structured $M$. Our discrepancy bounds also apply to the lattice random variables considered in \cite{LKP12} with the spanningness bounds computed in that paper; this will be made precise in \cref{lem:lkp} of \cref{sec:span}.

\subsubsection*{Related work}
We submitted a draft of this work in April 2018, and during our revision process Hoberg and Rothvoss posted a paper on arXiv using very similar techniques on a closely related problem \cite{HR18}. They study random $m\times n$ matrices $M$ with independent entries that are $1$ with probability $p$, and show that for $\disc M = 1$ with high probability in $n$ provided $n = \Omega(m^2 \log m)$.  The results are closely related but incomparable:
our results are more general, but when applied to their setting we obtain a weaker bound of $n \geq \Omega(m^3 \log^2m)$. 




\subsubsection*{Organization of the paper}
In \cref{sec:local} we build the technical machinery to carry about the strategy from the previous section. We state our local limit theorem and show how to use it to bound discrepancy. In \cref{sec:tsparse} we recall some techniques for bounding spanningness, the main parameter that controls our local limit theorem, and use these bounds to prove \cref{thm:tsparse} on the discrepancy of random $t$-sparse matrices. In \cref{sec:unit} we use similar techniques to bound the discrepancy of matrices with random unit columns. \cref{sec:proofs} contains the proofs of our local limit theorems.

\subsubsection*{Notation}
If not otherwise specified, $M$ is a random $m\times n$ matrix with columns drawn independently from a distribution $X$ on a lattice $\cL$ that is supported only in a ball $B(L)$, and the integer span of the support of $X$ (denoted $\supp X$) is $\cL$. $\Sigma$ denotes $\E XX^\dagger$. $D$ will denote the Voronoi cell of the dual lattice $\cL^*$ of $\cL$. $\| \cdot \|_2$ denotes the Euclidean norm for vectors and the spectral norm for matrices, and $\| \cdot \|_*$ denotes an arbitrary norm.

Throughout the paper there are several constants $c_1, c_2, \dots$. These are assumed to be absolute constants, and we will assume they are large enough (or small enough) when needed. 

\section{Likely local limit theorem and discrepancy}\label{sec:local}
Here we show that with high probability over the choice of $M$, the random variable $My$ resembles a Gaussian on the lattice $\cL$. We also show how to use the local limit theorem to bound discrepancy.

For ease of reference, we define the rate of growth $n$ must satisfy in order for our local limit theorems to hold. 
\begin{defin}\label{defin:ndef}
Define $N_0 = N_0(m, s(X), L, \det \cL)$ by 
\begin{align}N_0:=c_{\ref*{nlower}} \max\left\{m^2 L^2( \log m + \log L)^2, s(X)^{-4}L^{-2},L^2 \log^2 \det \cL\right\},\label{eq:nlim}
\end{align}
where $c_{\ref*{nlower}} $ is a suitably large absolute constant. 
\end{defin}
A few definitions will be of use in the next theorem.
\begin{defin}\label{def:gauss_psd}
For a matrix $M$, define the lattice Gaussian with covariance $\frac{1}{2} M M^\dagger$ by
$$G_{M}(\lambda) = \frac{2^{m/2}\det(\cL)}{\pi^{m/2}\sqrt{\det(MM^\dagger)}} e^{-2 \lambda^{\dagger}( MM^\dagger)^{-1}\lambda}.$$
For two Hermitian matrices $A$ and $B$, $A \succeq B$ means $A - B$ is positive-semidefinite.
\end{defin}

\begin{thm}\label{thm:lattice_local_limit} Let $X$ be a random variable on a lattice $\cL$ such that $\E XX^\dagger = I_m$, $\supp X \subset B(L)$, and $\cL = \spn_\Z \supp X$. For $n \geq N_0$, with probability at least $1 - c_{\ref*{failconst}} L\sqrt{\frac{\log n}{n}}$ over the choice of columns of $M$, the following two properties of $M$ hold:
 \begin{enumerate}
 \item $MM^\dagger \succeq \frac{1}{2}n I_m$.
\item For all $\lambda \in \cL -  \frac{1}{2} M \textbf 1$,
\begin{align}\left|\Pr_{y_i \in \{\pm 1/2\}} [M\mathbf y = \mathbf\lambda] - G_{M}(\lambda)\right| = G_{M}(0) \cdot  \frac{ 2m^2 L^2}{n}. \label{eq:thm_bound}
\end{align}
where $G_M$ is as in \cref{def:gauss_psd}.

In particular, for all $\lambda \in \cL -  \frac{1}{2} M \textbf 1$ with $e^{-2 \lambda^{\dagger}( MM^\dagger)^{-1}\lambda} > 2m^2 L^2/n$ we have 
$$\Pr_{y \in \{\pm 1/2\}^n}(My = \lambda) > 0.$$
\end{enumerate}
\end{thm}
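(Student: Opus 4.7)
The plan is to prove the two conclusions separately. For (1), the matrix Chernoff inequality applies directly: the $X_j X_j^\dagger$ are independent PSD matrices with $\E X_j X_j^\dagger = I_m$ and spectral norm at most $L^2$, so $\lambda_{\min}(MM^\dagger) < n/2$ holds with probability at most $m\exp(-\Omega(n/L^2))$, which is negligible under the hypothesis $n \geq N_0$ since $N_0 \gtrsim L^2 \log m$.

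For (2), I would use Fourier inversion on the lattice $\cL$. Since $My - \tfrac{1}{2}M\mathbf 1 \in \cL$, for any $\lambda \in \cL - \tfrac{1}{2} M \mathbf 1$ one has
$$ \Pr_y[My = \lambda] \;=\; \det\cL \int_D \phi_M(\theta)\, e^{-2\pi i \langle \theta, \lambda\rangle}\, d\theta, \qquad \phi_M(\theta) := \prod_{j=1}^n \cos(\pi \langle \theta, X_j\rangle), $$
where $D$ is the Voronoi cell of $\cL^*$. The target $G_M(\lambda)/\det\cL$ admits the same representation, with the integration extended to all of $\R^m$ and $\phi_M$ replaced by $\psi_M(\theta) := \exp(-\tfrac{\pi^2}{2}\theta^\dagger MM^\dagger \theta)$, the characteristic function of the Gaussian with covariance $\tfrac14 MM^\dagger$. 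Subtracting and taking absolute values yields the integral of $|\phi_M - \psi_M|$ over $D$ plus the Gaussian tail of $\psi_M$ outside $D$; the tail is controlled using the bound on $\lambda_{\min}(MM^\dagger)$ from (1). I would split $D = D_{\text{low}} \cup D_{\text{high}}$ with $D_{\text{low}} = \{\theta : \|\theta\| \leq r\}$ for $r \asymp \sqrt{(\log n)/n}$. In $D_{\text{low}}$, the Taylor expansion $\log \cos(\pi x) = -\tfrac{\pi^2}{2} x^2 + O(x^4)$, summed over $j$ using $|\langle\theta, X_j\rangle| \leq L\|\theta\|$, gives $\phi_M(\theta) = \psi_M(\theta)\bigl(1 + O(L^2 r^2 \cdot \theta^\dagger MM^\dagger \theta)\bigr)$; integrating the difference against the Fourier kernel, and using that a quadratic form under a Gaussian of covariance $\tfrac14 MM^\dagger$ has mean $O(m)$, yields the claimed $G_M(0) \cdot O(m^2 L^2 / n)$ contribution.

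The heart of the proof lies in $D_{\text{high}}$. Pointwise, $\cos^2(\pi x) \leq \exp(-c (x \bmod 1)^2)$ gives
$$ |\phi_M(\theta)|^2 \;\leq\; \exp\!\Bigl(-c \sum_{j=1}^n (\langle \theta, X_j\rangle \bmod 1)^2\Bigr), $$
and $\E[(\langle \theta, X\rangle \bmod 1)^2] = \tilde X(\theta)^2 \geq \min(s(X)^2, \|\theta\|^2/4)$ for $\theta \in D \setminus \{0\}$ (immediate from \cref{defin:spanningness}, since such a $\theta$ is either pseudodual, forcing $\tilde X(\theta) \geq s(X)$, or not pseudodual, forcing $\tilde X(\theta) > \|\theta\|/2$). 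The \emph{main obstacle} is upgrading this pointwise-in-expectation estimate to a bound that holds \emph{simultaneously} for all $\theta \in D_{\text{high}}$ with high probability over $M$. I would do this by a standard net-plus-union-bound argument: take a $\delta$-net of $D$ with $\delta$ polynomial in $1/(nL)$, of cardinality $e^{O(m \log(nL) + \log\det\cL)}$ after bounding $\operatorname{diam}(D)$ in terms of $\log\det\cL$; per net point, Hoeffding's inequality (each summand lies in $[0, 1/4]$) bounds the failure probability by $\exp(-\Omega(n \min(s(X)^4, r^4)))$; and the $O(L)$-Lipschitz continuity of $\theta \mapsto \tfrac{1}{n}\sum_j \cos^2(\pi \langle \theta, X_j\rangle)$ transports the bound from the net to all of $D_{\text{high}}$. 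The three terms in \cref{eq:nlim} defining $N_0$ — namely $m^2 L^2(\log mL)^2$, $s(X)^{-4} L^{-2}$, and $L^2 \log^2 \det\cL$ — are calibrated precisely so that this union bound succeeds. Integrating the resulting uniform bound $|\phi_M(\theta)| \lesssim \exp(-c' n \min(s(X)^2, \|\theta\|^2))$ over $D_{\text{high}}$ contributes a quantity much smaller than $G_M(0) \cdot m^2 L^2 / n$, completing the local limit theorem. The final positivity statement is then immediate from \cref{eq:thm_bound}.
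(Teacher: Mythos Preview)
Your overall framework---Fourier inversion over the Voronoi cell, splitting into a near-origin piece handled by Taylor expansion and a far-from-origin piece---is exactly what the paper does. The treatment of the near-origin region and the Gaussian tail is also essentially the same (the paper takes $\epsilon = L^{-1/2}n^{-1/4}$ rather than $\sqrt{(\log n)/n}$, but this is a detail). The substantive divergence, and the place where your argument has a real gap, is the handling of $D_{\text{high}}$.

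The paper does \emph{not} attempt to control $|\phi_M(\theta)|$ pointwise-with-high-probability on $D_{\text{high}}$. Instead it bounds the integral $I_3 = \int_{D\setminus B(\epsilon)} |\phi_M(\theta)|\,d\theta$ \emph{in expectation} via Fubini: $\E_M[I_3] = \int_{D\setminus B(\epsilon)} (\E|\cos(\pi\langle\theta,X\rangle)|)^n\,d\theta \leq \operatorname{vol}(D)\cdot e^{-2\epsilon^2 n}$, then applies Markov. This is where the $L^2\log^2\det\cL$ term in $N_0$ comes from: $\operatorname{vol}(D) = 1/\det\cL$ appears only as a multiplicative factor to be beaten by $e^{-\epsilon^2 n}$. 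No net, no uniform bound, no diameter of $D$ is needed.

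Your Hoeffding-plus-net route runs into two concrete problems. First, with $r\asymp\sqrt{(\log n)/n}$, the per-point Hoeffding failure probability near the boundary of $D_{\text{high}}$ is $\exp(-\Omega(n\tilde X(\theta)^4)) = \exp(-\Omega(nr^4)) = \exp(-\Omega((\log n)^2/n))$, which tends to $1$; the union bound cannot even start. (The paper's $\epsilon = L^{-1/2}n^{-1/4}$ satisfies $n\epsilon^4 = L^{-2}$, which is likewise not large; the paper's method does not need it to be.) Second, $\operatorname{diam}(D)$ is twice the Euclidean covering radius of $\cL^*$, which is not controlled by $\log\det\cL$ alone; bounding it via Minkowski-type inequalities introduces factors like $m\log(mL)$ into the log of the net cardinality, and matching that in the Hoeffding exponent would force $n \gtrsim s(X)^{-4}\cdot m\log(mL)$ rather than the stated $s(X)^{-4}L^{-2}$. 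In short, the net argument can perhaps be salvaged with a further subdivision of $D_{\text{high}}$ (treating $\|\theta\|<1/(2L)$ deterministically via item~(1), since there the $\bmod\ 1$ is inactive), but even then it yields a worse $N_0$ than the paper's, and as written it does not close.
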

Equipped with the local limit theorem, we may now bound the discrepancy. We restate \cref{thm:lattice} using $N_0$.

\begin{thm}[discrepancy for isotropic random variables]\label{thm:identity_discrepancy} Suppose $X$ is an isotropic random variable on a nondegenerate lattice $\cL$ with $\cL=\spn_\Z \supp X$ and $\supp X \subset B(L)$. If $n \geq N$ then
$$ \disc_*( M) \leq d_* (M \mathbf 1, 2 \cL) \leq 2 \rho_*(\cL)$$
with probability $1 - c_{\ref*{failconst}} L\sqrt{\frac{\log n}{n}}$, where \refstepcounter{const}\label{ndisc} 
\begin{align}N_1 =c_{\ref*{ndisc}} \max\left\{R^2_* \rho_*(\cL)^2, N_0\left(m, s(X), L, \det \cL\right)\right\} \label{eq:isodisc}\end{align}
 for $N_0$ as in \cref{eq:nlim}.

\end{thm}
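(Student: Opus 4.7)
The plan is to deduce the theorem directly from the ``in particular'' clause of the local limit theorem \cref{thm:lattice_local_limit}. The second inequality $d_*(M\mathbf 1, 2\cL)\le 2\rho_*(\cL)$ is immediate: each column of $M$ lies in $\cL$ so $M\mathbf 1\in\cL$, hence $\frac12 M\mathbf 1\in \spn_\R\cL=\R^m$ (using nondegeneracy), and by definition of the covering radius there is $\ell\in\cL$ with $\|\ell-\tfrac12 M\mathbf 1\|_*\le \rho_*(\cL)$, i.e.\ $d_*(M\mathbf 1,2\cL)\le 2\rho_*(\cL)$. So the real content is the first inequality.

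For the first inequality, substitute $y=2y'$ with $y'\in\{\pm\frac12\}^n$, so that $\disc_*(M)=2\min_{y'}\|My'\|_*$. It therefore suffices, with high probability over $M$, to exhibit a $y'\in\{\pm\frac12\}^n$ with $\|My'\|_*\le \tfrac12 d_*(M\mathbf 1,2\cL)$. Note that for any $y'\in\{\pm\frac12\}^n$ the vector $My'$ lies in the coset $\cL-\tfrac12 M\mathbf 1$ (write $y'=\tfrac12\mathbf 1-w$ with $w\in\{0,1\}^n$). Conversely, choosing $\lambda\in\cL-\tfrac12 M\mathbf 1$ to minimize $\|\cdot\|_*$ gives exactly $\|\lambda\|_*=d_*(\tfrac12 M\mathbf 1,\cL)=\tfrac12 d_*(M\mathbf 1,2\cL)\le\rho_*(\cL)$. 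The goal is to check that this optimal $\lambda$ satisfies the size condition of the local limit theorem, so that $\Pr_{y'}[My'=\lambda]>0$ and such a $y'$ exists.

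Now invoke \cref{thm:lattice_local_limit} with $n\ge N_0$: with probability at least $1-c_{\ref*{failconst}}L\sqrt{\log n/n}$, both $MM^\dagger\succeq \tfrac n2 I_m$ and the local limit conclusion hold. From $MM^\dagger\succeq \tfrac n2 I_m$ we get $(MM^\dagger)^{-1}\preceq \tfrac 2n I_m$, and using $\|\lambda\|_2\le R_*\|\lambda\|_*\le R_*\rho_*(\cL)$ (by definition of the distortion),
\[
\lambda^\dagger (MM^\dagger)^{-1}\lambda \;\le\; \frac{2}{n}\|\lambda\|_2^2 \;\le\; \frac{2R_*^2\rho_*(\cL)^2}{n}.
\]
The size condition $e^{-2\lambda^\dagger(MM^\dagger)^{-1}\lambda}>2m^2L^2/n$ becomes $\tfrac{4R_*^2\rho_*(\cL)^2}{n}<\log\!\big(\tfrac{n}{2m^2L^2}\big)$. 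The hypothesis $n\ge c_{\ref*{ndisc}}R_*^2\rho_*(\cL)^2$ makes the left side at most a small constant, while the hypothesis $n\ge N_0\ge c_{\ref*{nlower}} m^2L^2(\log m+\log L)^2$ makes the right side at least a large constant; choosing $c_{\ref*{ndisc}}$ sufficiently large relative to $c_{\ref*{nlower}}$ yields the inequality. Hence $\Pr_{y'}[My'=\lambda]>0$, so some $y\in\{\pm 1\}^n$ achieves $\|My\|_*=2\|\lambda\|_*\le d_*(M\mathbf 1,2\cL)$, proving the theorem.

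The only step that requires any attention is the arithmetic comparison of $n$, $R_*\rho_*(\cL)$, $m$, and $L$ in the size check; everything else is essentially bookkeeping on top of \cref{thm:lattice_local_limit}, which does all the genuine work.
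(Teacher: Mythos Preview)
Your proposal is correct and follows essentially the same route as the paper: pick a closest coset representative $\lambda\in\cL-\tfrac12 M\mathbf 1$ with $\|\lambda\|_*\le\rho_*(\cL)$, then verify the size condition in the ``in particular'' clause of \cref{thm:lattice_local_limit} using $MM^\dagger\succeq\tfrac n2 I_m$ and $\|\lambda\|_2\le R_*\|\lambda\|_*$. The paper's arithmetic is marginally cleaner---it first checks $e^{-1}\ge 2m^2L^2/n$ from $n\ge N_0$, then separately checks $\lambda^\dagger(MM^\dagger)^{-1}\lambda<\tfrac12$ from $n\ge c_{\ref*{ndisc}}R_*^2\rho_*(\cL)^2$---but your combined inequality is equivalent, and your derivation of the chain $\disc_*(M)\le d_*(M\mathbf 1,2\cL)$ is actually stated more carefully than the paper's.
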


\begin{proof} 

By the definition of the covering radius of a lattice, there is a point $\lambda \in \cL -  \frac{1}{2} M \textbf 1$ with $\|\lambda\|_* \leq d_* (\frac{1}{2} M \mathbf 1,  \cL) \leq \rho_{*}(\cL)$. It is enough to show that, with high probability over the choice of $M$, the point $\lambda$ is hit by $My$ with positive probability over $y \in \{\pm 1/2\}^n$. If so, $2y$ is a coloring of $M$ with discrepancy $2 \rho_* (\cL)$.\\

Because $n$ is at least $N_0(m, s(X), L, \det \cL')$,  the events in \cref{thm:lattice_local_limit} hold with probability at least $1 - c_{\ref*{failconst}} L\sqrt{\frac{\log n}{n}}$. We claim that if the events in \cref{thm:lattice_local_limit} occur, then $\lambda$ is hit by $My$ with positive probability. Indeed, by the final conclusion in \cref{thm:lattice_local_limit}, it is enough to show that
$$e^{-2 \lambda^{\dagger}(MM^\dagger)^{-1}\lambda} > 2 m^2 L^2/n.$$
Because $n \geq N$, $e^{-1} \geq 2 m^2 L^2/n$. Thus, it is enough to show $\lambda^{\dagger}(MM^\dagger)^{-1}\lambda < \frac{1}{2}$. This is true; by $MM^\dagger \geq \frac{1}{2} n I_m$, we have $\lambda^{\dagger}(MM^\dagger)^{-1}\lambda \leq 2\|\lambda\|_*^2 \frac{R^2_*}{n} \leq 2\frac{R^2_*}{n} \rho_*(\cL)^2$.
\end{proof}


Now \cref{thm:lattice} is an immediate corollary of \cref{thm:identity_discrepancy}.

\begin{thm}[Restatement of \cref{thm:lattice}]\label{thm:lattice_disc}Suppose $X$ is a random variable on a nondegenerate lattice $\cL$. Suppose $\Sigma:=\E [XX^\dagger]$ has least eigenvalue $\sigma$, $\supp X \subset \Sigma^{1/2} B(L)$, and that $\cL=\spn_\Z \supp X$. If $n \geq N$ then
$$ \disc_*( M) \leq d_* (M \mathbf 1, 2 \cL) \leq 2 \rho_*(\cL)$$
with probability at least $1 - c_{\ref*{failconst}} L\sqrt{\frac{\log n}{n}}$, where \refstepcounter{const}\label{ndisc} 
\begin{align}N =c_{\ref*{ndisc}} \max\left\{\frac{R^2_* \rho_*(\cL)^2}{\sigma}, N_0\left(m, s(\Sigma^{-1/2}X), L, \frac{\det \cL}{\sqrt{\det \Sigma}}\right)\right\} \label{eq:ndisc}\end{align}
 for $N_0$ as in \cref{eq:nlim}.

\end{thm}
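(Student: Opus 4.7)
The plan is to reduce the general case to the isotropic case \cref{thm:identity_discrepancy} by a linear change of variable, absorbing $\Sigma^{-1/2}$ into the random variable. Set $Z := \Sigma^{-1/2} X$ and $M' := \Sigma^{-1/2} M$. Then $Z$ is isotropic, since $\E[ZZ^\dagger]=\Sigma^{-1/2}\Sigma\,\Sigma^{-1/2}=I_m$; its support lies in $B(L)$ by the hypothesis $\supp X\subset \Sigma^{1/2} B(L)$; and its integer span is the nondegenerate lattice $\cL':=\Sigma^{-1/2}\cL$, which has determinant $\det\cL' = \det\cL/\sqrt{\det\Sigma}$. The matrix $M'$ has $n$ i.i.d.\ columns with distribution $Z$, so the hypotheses of \cref{thm:identity_discrepancy} apply to $(Z,M')$.

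To transfer the conclusions back to $(X,M)$, introduce the norm $\|v\|_{*'}:=\|\Sigma^{1/2}v\|_*$ on $\R^m$. Then $\|M'y\|_{*'}=\|My\|_*$ for every $y$, so $\disc_{*'}(M')=\disc_*(M)$; similarly, writing any $y\in\cL$ as $\Sigma^{1/2}y'$ with $y'\in\cL'$ gives $d_*(M\mathbf1,2\cL)=d_{*'}(M'\mathbf 1,2\cL')$ and $\rho_{*'}(\cL')=\rho_*(\cL)$. For the distortion, writing $u=\Sigma^{1/2}v$ and using $\|\Sigma^{-1/2}\|_2=\sigma^{-1/2}$,
\[
R_{*'} \;=\; \max_{\|v\|_{*'}=1}\|v\|_2 \;=\; \max_{\|u\|_*=1}\|\Sigma^{-1/2}u\|_2 \;\le\; \sigma^{-1/2}\,R_*.
\]
Thus $(R_{*'})^2 \rho_{*'}(\cL')^2 \le \sigma^{-1} R_*^2\rho_*(\cL)^2$, and $N_0\bigl(m,s(Z),L,\det\cL'\bigr)=N_0\bigl(m,s(\Sigma^{-1/2}X),L,\det\cL/\sqrt{\det\Sigma}\bigr)$.

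The hypothesis $n\ge N$ (with $N$ as in \cref{eq:ndisc}) is therefore at least the lower bound $N_1$ required to invoke \cref{thm:identity_discrepancy} on $(Z,M')$ in norm $\|\cdot\|_{*'}$. That theorem then yields, with probability at least $1-c_{\ref*{failconst}} L\sqrt{\log n/n}$,
\[
\disc_{*'}(M') \;\le\; d_{*'}(M'\mathbf 1,2\cL') \;\le\; 2\rho_{*'}(\cL'),
\]
which upon translating back via the identifications above becomes exactly $\disc_*(M)\le d_*(M\mathbf1,2\cL)\le 2\rho_*(\cL)$, as required.

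There is no real obstacle here beyond bookkeeping: the entire argument is a change-of-basis that transports all relevant parameters ($L$, $\cL$, $\Sigma$, $\rho_*$, $R_*$, spanningness) through the linear map $\Sigma^{-1/2}$ and checks that the bounds assumed on $n$ suffice. The mildly delicate point is verifying that the distortion transforms with the factor $\sigma^{-1/2}$ and that the lattice-determinant transforms by $\sqrt{\det\Sigma}$, since these are precisely the two quantities that produce the $\sigma$ and $\det\Sigma$ dependence in the statement of $N$.
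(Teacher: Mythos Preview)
Your proposal is correct and follows essentially the same approach as the paper: both reduce to the isotropic case \cref{thm:identity_discrepancy} via the linear change of variable $Z=\Sigma^{-1/2}X$, introducing the auxiliary norm $\|v\|_{*'}=\|\Sigma^{1/2}v\|_*$ and verifying that the parameters $L$, $\rho_*$, $R_*$, $\det\cL$, and $s(\cdot)$ transform as required. Your write-up is in fact slightly more explicit than the paper's on the distortion bound and on the chain $\disc_*(M)\le d_*(M\mathbf 1,2\cL)\le 2\rho_*(\cL)$.
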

\begin{proof}Note that $\sigma > 0$, because $\cL$ is nondegenerate and $\cL = \spn_Z \supp X \subset \spn_\R\supp X$. Thus, $\sigma = 0$ contradicts $\spn_\R\supp X \subsetneq \R^m$.

Let $Z:= \Sigma^{-1/2} X$ so that $\E [ZZ^\dagger] = I_m$; we'll apply \cref{thm:identity_discrepancy} to the random variable $Z$, the norm $\|\cdot\|_0$ given by $\| v \|_0:=\|\Sigma^{1/2} v \|_*$, and the lattice $\cL' = \Sigma^{-1/2} \cL$. The distortion $R_0$ is at most $R_*/\sigma^{1/2}$, the lattice determinant becomes $\det \cL' = \det \cL/\sqrt{\det \Sigma}$, and $\supp Z \subset B(L)$. The covering radius of $\rho_0(\cL')$ is exactly $\rho_*(\cL)$. Since the choice of $N$ in \cref{eq:ndisc} is $N_1$ of \cref{thm:identity_discrepancy} for $Z, \|\cdot \|_0,$ and $\cL'$, we have from \cref{thm:identity_discrepancy} that
$$\disc_*(M) = \disc_0(\Sigma^{-1/2} M)  \leq 2\rho_0(\cL') = 2 \rho_*(\cL)$$
with probability at least $1 - c_{\ref*{failconst}} L\sqrt{\frac{\log n}{n}}$. \end{proof}


\section{Discrepancy of random $t$-sparse matrices}\label{sec:tsparse}


Here we will state our spanningness bounds for $t$-sparse matrices, and before proving them, compute the bounds guaranteed by \cref{thm:lattice}. For $S \in \binom{[m]}{t}$, let $1_S \in \R^m$ denote the characteristic vector of $S$.
\begin{fact}[random $t$-sparse vector]\label{fact:tsparse} A \emph{random $t$-sparse vector} is $1_S$ for $S$ drawn uniformly at random from $\binom{[m]}{t}$. Let $X$ be a random $t$-sparse vector with $0 < t < m$. The lattice $\cL \subset \Z^m$ is the lattice of integer vectors with coordinate sum divisible by $t$; we have $\rho_\infty(\cL) =1$. Observe that $\cL^* = \Z^m +  \Z \frac{1}{t} \textbf{1}$, where $\textbf{1}$ is the all ones vector. Since $e_1, \dots, e_{m-1}, \frac{1}{t}\textbf 1$ is a basis for $\cL^*$, $\det \cL = 1/\det \cL^*= t$. 
$$\Sigma_{i,j} = \E[X X^\dagger]_{ij} = \left\{ \begin{array}{ccc} 
\frac{t}{m} & i = j \\
 & \\
\frac{t(t-1)}{m(m-1)} & i \neq j \end{array}\right.
$$
The eigenvalues of $\Sigma$ are $\frac{t^2}{m}$ with multiplicity one, and $\frac{t(m-t)}{m(m-1)}$ with multiplicity $m-1$.
\end{fact}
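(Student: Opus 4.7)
The statement is a bookkeeping fact verifying six quantities associated to the random $t$-sparse vector $X$, and my plan is to treat each in sequence, leaning on elementary linear algebra.

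First, I would identify the lattice $\cL = \spn_\Z \supp X$. Since each $1_S$ has coordinate sum exactly $t$, any integer combination lies in $\cL_0 := \{v \in \Z^m : \sum_i v_i \equiv 0 \pmod t\}$. For the reverse containment, I would fix any $S_0 \in \binom{[m]}{t}$ and observe that differences $1_S - 1_{S'}$, and in particular swaps $e_i - e_j$ (take $S,S'$ that differ in one element), together with $1_{S_0}$ itself generate $\cL_0$: the swaps produce the sublattice of sum-zero integer vectors, and adding integer multiples of $1_{S_0}$ covers every sum that is a multiple of $t$.

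Next, the covering radius $\rho_\infty(\cL) = 1$. The upper bound: given $x \in \R^m$, round each coordinate to the nearest integer $y_i$ so that $\|x - y\|_\infty \leq 1/2$; adjust one coordinate by at most $t/2 \cdot \text{(sign)}$ — actually, simpler, since $\rho_\infty(\Z^m) = 1/2$ and $\cL$ has index $t$ in $\Z^m$ I should argue directly: round $x$ to the nearest $y \in \Z^m$ with $\|x-y\|_\infty \leq 1/2$; then change one coordinate of $y$ by at most $t/2$ in some direction — this is too crude. The right argument is that any vector in $\Z^m$ is within $\ell_\infty$-distance $1$ of $\cL$ (adjust a single coordinate by at most $\lfloor t/2 \rfloor$, wait this fails). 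Let me be more careful: use that $e_1 \in \cL^\circ$ (affine shift) — actually take $y \in \Z^m$ closest to $x$ in $\ell_\infty$; if $\sum y_i \not\equiv 0 \pmod t$, shift one coordinate by at most $\lfloor t/2 \rfloor$, giving $\|x-y'\|_\infty \leq 1/2 + t/2$, which is too large for general $t$. So I need to exhibit a deeper shift: since $1_S \in \cL$ for each $S$, add or subtract $1_S$ on the appropriate $t$ coordinates that are closest to having the wrong fractional part. The correct bound should be $\rho_\infty(\cL) \leq 1$ via a case analysis: round $x$ to have coordinates in $(-1/2, 1/2]$ shifted by an integer vector $z$; then adjust by a suitable $1_S - 1_{S'} \in \cL$ to fix the sum. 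I will work out the details carefully, noting that $\rho_\infty(\cL) \geq 1$ because $(1/2)\mathbf{1} + (1/(2t))e_1$-type shifts require $\ell_\infty$-distance $1$ from $\cL$ for some choices (or simpler: $(1-\varepsilon)e_1$ is at $\ell_\infty$-distance $1-\varepsilon$ from $\cL$, and taking $\varepsilon \to 0$ we approach $1$).

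For the dual lattice, I would verify both inclusions of $\cL^* = \Z^m + \Z\tfrac{1}{t}\mathbf{1}$: every vector of the form $z + \tfrac{k}{t}\mathbf{1}$ with $z \in \Z^m$, $k \in \Z$ pairs integrally with any $\lambda \in \cL$ because $\langle z, \lambda\rangle \in \Z$ and $\langle \tfrac{1}{t}\mathbf{1}, \lambda\rangle = \tfrac{1}{t}\sum_i \lambda_i \in \Z$ by definition of $\cL$. Conversely, any $\theta \in \cL^*$ pairs integrally with $e_i - e_j \in \cL$, so $\theta_i - \theta_j \in \Z$, meaning $\theta = z + \alpha\mathbf{1}$ for some $z \in \Z^m$ and $\alpha \in \R$; pairing with $1_{S_0}$ forces $t\alpha \in \Z$. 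From the basis $e_1, \dots, e_{m-1}, \tfrac{1}{t}\mathbf{1}$ of $\cL^*$, the determinant is $|1/t|$, and then $\det \cL = 1/\det \cL^* = t$.

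Finally, the covariance matrix: $\Pr[X_i = 1] = t/m$ gives the diagonal entries, and $\Pr[X_i = X_j = 1] = \binom{m-2}{t-2}/\binom{m}{t} = t(t-1)/(m(m-1))$ gives the off-diagonal. Writing $\Sigma = aI + bJ$ with $a = \tfrac{t(m-t)}{m(m-1)}$ and $b = \tfrac{t(t-1)}{m(m-1)}$, the eigenvalues follow from those of $J$ (which are $m$ with eigenvector $\mathbf{1}$ and $0$ with multiplicity $m-1$): on $\mathbf{1}$ we get $a + mb = t^2/m$ after simplification, and on $\mathbf{1}^\perp$ we get $a = t(m-t)/(m(m-1))$. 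The main obstacle is getting the covering-radius bound tight; everything else is direct computation.
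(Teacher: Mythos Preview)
The paper states this as a Fact without proof, so there is no argument of theirs to compare against; your verifications of $\cL$, $\cL^*$, $\det\cL$, $\Sigma$, and its eigenvalues are all correct and standard.

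The one genuine gap is the covering-radius upper bound $\rho_\infty(\cL)\leq 1$. None of the approaches you sketch works as written: adjusting a single coordinate of a nearest-integer rounding by up to $\lfloor t/2\rfloor$ gives only $\rho_\infty\leq 1/2+\lfloor t/2\rfloor$, and your final suggestion of adding $1_S-1_{S'}$ to such a rounding cannot change the coordinate sum modulo $t$, since $1_S-1_{S'}$ has sum zero. The clean argument is this: for each $i$, both $\lfloor x_i\rfloor$ and $\lfloor x_i\rfloor+1$ lie within $\ell_\infty$-distance $1$ of $x_i$, so by choosing each $y_i\in\{\lfloor x_i\rfloor,\lfloor x_i\rfloor+1\}$ the sum $\sum_i y_i$ ranges over $m+1$ consecutive integers; since $m\geq t$ this range contains a multiple of $t$, yielding $y\in\cL$ with $\|x-y\|_\infty\leq 1$. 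For the lower bound (when $t\geq 2$), take $x=e_1$ directly rather than a limit: the only integer vector strictly inside the unit $\ell_\infty$-ball about $e_1$ is $e_1$ itself, which is not in $\cL$, so $d_\infty(e_1,\cL)=1$. (For $t=1$ one has $\cL=\Z^m$ and $\rho_\infty=1/2$, so the equality as stated requires $t\geq 2$; only the inequality $\rho_\infty\leq 1$ is used downstream in the paper.)
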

The next lemma is the purpose of the next two subsections. 

\begin{lem}\label{lem:tspanning} There is a constant $\refstepcounter{const} \label{spreadconst} c_{\ref*{spreadconst}}$ such that the spanningness is at least $c_{\ref*{spreadconst}} m^{-1}$; that is,
$$ s(\Sigma^{-1/2}X) \geq c_{\ref*{spreadconst}} m^{-1}.$$

\end{lem}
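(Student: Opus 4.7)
The plan is to translate the spanningness question into a concrete problem about subset sums modulo $1$ and then run a dichotomy argument. Set $\phi := \Sigma^{-1/2}\theta$. Since $Z = \Sigma^{-1/2}X$ is supported on $\cL' := \Sigma^{-1/2}\cL$ with dual $(\cL')^* = \Sigma^{1/2}\cL^*$, the two quantities in \cref{defin:spanningness} become
\[
\tilde Z(\theta)^2 \;=\; \E_{S\in\binom{[m]}{t}}\Bigl|\textstyle\sum_{i\in S}\phi_i \bmod 1\Bigr|^2,\qquad d\bigl(\theta,(\cL')^*\bigr) \;=\; \inf_{\nu\in\cL^*}\|\Sigma^{1/2}(\phi-\nu)\|_2,
\]
and the condition $\theta\notin(\cL')^*$ is equivalent to $\phi\notin\cL^* = \Z^m + (1/t)\Z\mathbf 1$ by \cref{fact:tsparse}. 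Both expressions are invariant under replacing $\phi$ by any representative modulo $\cL^*$, so I would first replace $\phi$ by a coset representative that is small in a controlled sense (first placing each coordinate in $(-1/2,1/2]$ via $\Z^m$-shifts, then applying the optimal shift by $(k/t)\mathbf 1$).

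The heart of the argument is a dichotomy on this reduced $\phi$. In the \emph{small-coordinate regime}, where $|\phi_i|<1/(2t)$ for every $i$, every subset sum $\sum_{i\in S}\phi_i$ lies in $(-1/2,1/2)$, so the $\bmod\,1$ is trivial and
\[
\tilde Z(\theta)^2 \;=\; \E_S\Bigl(\textstyle\sum_{i\in S}\phi_i\Bigr)^{\!2} \;=\; \phi^\top\Sigma\phi \;=\; \|\Sigma^{1/2}\phi\|_2^2 \;\geq\; d\bigl(\theta,(\cL')^*\bigr)^2,
\]
where the last step uses $\nu=0$ in the infimum. Combined with pseudoduality $\tilde Z(\theta) \leq d(\theta,(\cL')^*)/2$, this forces $\phi=0$, contradicting $\phi\notin\cL^*$; so this regime contributes nothing to the infimum defining $s(Z)$. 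In the complementary \emph{large-coordinate regime}, some coordinate of the reduced $\phi$ satisfies $|\phi_{i^*}|\geq 1/(2t)$, and I would prove $\tilde Z(\theta)\geq c/m$ directly, without appealing to pseudoduality at all. The idea, in the style of \cite{LKP12}, is a swap argument on the Johnson graph: for $S,S'\in\binom{[m]}{t}$ related by swapping $i\in S$ for $j\notin S$, the subset sums differ by $\phi_j-\phi_i$, and since $\phi\notin\cL^*$ prevents all these differences from being integer, averaging over the $\Theta(m)$ swaps adjacent to a typical $S$ forces a positive fraction of subset sums to have $|\,\cdot\,\bmod 1| \geq \Omega(1/m)$, yielding $\tilde Z(\theta)^2 = \Omega(1/m^2)$.

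The main obstacle is the second case: producing the $\Omega(1/m^2)$ lower bound uniformly over every $\phi$ whose reduced form escapes the small regime, handling in particular (i) single large coordinates versus spread-out configurations, (ii) the interaction between $\Z^m$-reduction and the residual $(1/t)\mathbf 1$-shift freedom, and (iii) potential cancellations in the subset-sum expectation across different $S$. I would expect to reduce to, or closely mimic, the antichain/Fourier-type subset-sum lower bounds of \cite{LKP12}, which are engineered precisely for expectations of this flavor; the signposted \cref{lem:lkp} strongly suggests those bounds are the intended input.
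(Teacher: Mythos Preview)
Your dichotomy framework is correct in spirit and mirrors the paper's $\alpha$-spreading/$\beta$-bounded machinery (\cref{spreadcor}). Your small-coordinate case is fine and in fact simpler than the paper's treatment: when all reduced $|\phi_i| < 1/(2t)$, every subset sum lies in $(-1/2,1/2)$, so $\tilde Z(\theta)^2 = \phi^\top\Sigma\phi \geq d(\theta,(\cL')^*)^2$ and $\theta$ cannot be pseudodual. The paper instead runs the dichotomy on $\max_S|\langle 1_S,\phi\rangle\bmod 1|$ with $t$-independent threshold $1/4$, handling the ``small'' side by explicitly constructing a $4$-bounded integral spanning set of $\ker A_X$ (\cref{4bd}); your coordinate criterion is a shortcut that sidesteps that construction at the cost of a $t$-dependent threshold and hence a larger ``large'' regime.

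The real gap is your large-coordinate case. First, the swap sketch is wrong as stated: $\phi\notin\cL^*$ does \emph{not} force some $\phi_j-\phi_i$ to be non-integer---take $\phi = (1/(2t))\mathbf 1$, for which all coordinate differences vanish yet $\phi\notin\cL^*$. Second, deferring to \cref{lem:lkp} yields only $\Omega((m\log m)^{-3/2})$-spreading, hence $s(Z) = \Omega((m\log m)^{-3/2})$, strictly weaker than the claimed $\Omega(m^{-1})$. The paper gets the sharp bound by a different, self-contained argument (\cref{sparsespread}): given any one set $T$ with $w(T) := |\langle 1_T,\phi\rangle\bmod 1| = \delta$, it writes $1_T = \sum_S \lambda_S 1_S$ with \emph{random} integer coefficients $\lambda_S$ (built from a random permutation of $[m]$ followed by random single-element swaps), shows $\E|\lambda_S| \leq 2m\binom{m}{t}^{-1}$ uniformly in $S$, and then deduces $\E_S w(S) \geq \delta/(2m)$ by the triangle inequality for $|\cdot\bmod 1|$. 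Hence $\tilde X(\theta)^2 \geq (\E_S w(S))^2 \geq \delta^2/(4m^2)$, i.e., $X$ is $(2m)^{-1}$-spreading, and combining with $4$-boundedness gives $s(X)\geq 1/(8m)$. That random-coefficient averaging, not a Johnson-graph swap or a black-box appeal to \cite{LKP12}, is the missing idea.
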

Before proving this, we plug the spanningness bound into \cref{thm:lattice_disc} to bound the discrepancy of $t$-sparse random matrices.

\begin{proof}[Proof of \cref{thm:tsparse}] If $X$ is a random $t$-sparse matrix, $\|\Sigma^{-1/2} X\|_2$ is $\sqrt{m}$ with probability one. This is because $\E\|\Sigma^{-1/2} X\|_2^2 = m$, but by symmetry $\|\Sigma^{-1/2} x\|_2$ is the same for every $x \in \supp X$. Hence, we may take $L = \sqrt{m}$. By \cref{fact:tsparse}, $\sigma$ is $\frac{t(m-t)}{m(m-1)}$. Now $N$ from \cref{thm:lattice_disc} is at most
\begin{align} c_{\ref*{ndisc}}\max\left\{\underbrace{m \cdot \frac{m(m-1)}{t(m-t)}}_{\frac{R_\infty^2 \rho_\infty(\cL)^2}{\sigma}}, 
\underbrace{m^3\log^2 m}_{m^2 L^2( \log M + \log L)^2}, 
\underbrace{m^3}_{s(X)^{-4}L^{-2}}
\underbrace{m \log^2 t}_{L^2\log^2 \det \cL}\right\},\label{eq:ntsparse}\end{align}
which is $O(m^3 \log^2 m)$. 
\end{proof}

We can refine this theorem to obtain the limiting distribution for the discrepancy.
\begin{thm}[discrepancy of random $t$-sparse matrices]\label{thm:tsparse2} Let $M$ be a random $t$-sparse matrix for $0 < t< m$. Let $Y = \operatorname{B}(m, 1/2)$ be a binomial random variable with $m$ trials and success probability $1/2$. Suppose $n =\Omega(m^3 \log^2 m)$. If $n$ is even, then 
\begin{align*}
\Pr[\disc(M) = 0] &= 2^{-m+1} + O\left(\sqrt{(m/n)\log n}\right)\\
\Pr[\disc(M) = 1] &= (1 - 2^{-m+1}) + O\left(\sqrt{(m/n)\log n}\right)
\end{align*}
and if $n$ is odd then
\begin{align*}
\Pr[\disc(M) =0] &= 0\\
\Pr[\disc(M) = 1] &= \Pr[Y \geq t| Y \equiv t \bmod 2] + O\left(\sqrt{(m/n)\log n}\right)\\
\Pr[\disc(M) = 2] &= \Pr[Y < t| Y \equiv t \bmod 2] + O\left(\sqrt{(m/n)\log n}\right)
\end{align*}
with probability at least $1 - O\left(\sqrt{\frac{m\log n}{n}}\right).$ Note that 
$$\Pr[Y \leq s| Y \equiv t \bmod 2] =2^{-m + 1} \sum_{ k \equiv t \bmod 2}^s \binom{m}{k}.$$
\end{thm}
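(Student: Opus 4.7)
The plan is to express $\disc(M)$ as an explicit combinatorial function of the parity vector $\bar v := M\mathbf{1} \bmod 2 \in \F_2^m$, and then compute the distribution of $\bar v$ by Fourier analysis.

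First, I would set $I(M) := 2\, d_\infty(M\mathbf{1}/2,\, \cL)$. Deterministically $\disc(M) \geq I(M)$, because every $y \in \{\pm 1\}^n$ makes $My/2 \in \cL - M\mathbf{1}/2$. For the matching upper bound I would invoke \cref{thm:lattice_local_limit} exactly as in the proof of \cref{thm:tsparse}: the minimizing $\lambda \in \cL - M\mathbf{1}/2$ satisfies $\|\lambda\|_\infty \leq \rho_\infty(\cL) = 1$, so $\|\lambda\|_2 \leq \sqrt{m}$, and for $n = \Omega(m^3 \log^2 m)$ the local limit theorem's hypothesis is satisfied, yielding $\Pr_y[My = 2\lambda] > 0$. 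Together these give $\disc(M) = I(M)$ with probability at least $1 - O(\sqrt{m \log n / n})$.

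Next, I would identify $I(M)$ from $\bar v$ by a short case analysis. Writing $c \in \cL$ as $c_i = v_i/2 + e_i$ with $e_i \in \Z$ when $v_i$ is even and $e_i \in \Z + 1/2$ when $v_i$ is odd, the lattice condition $c \in \cL$ becomes $\sum_i e_i \equiv -tn/2 \pmod t$. For $n$ even this reduces to $\sum_i e_i \equiv 0 \pmod t$: the choice $e_i = 0$ for all $i$ works iff $\bar v = 0$ (giving $I = 0$), and otherwise $\sum_i e_i = 0$ is always achievable with $|e_i| \leq 1/2$ by pairing up the half-integer coordinates (giving $I = 1$). For $n$ odd the condition becomes $\sum_i e_i \equiv t/2 \pmod t$; since only the $k := |\bar v|$ odd-$v_i$ coordinates contribute to $\sum_i e_i$ and each by at most $1/2$ in absolute value, a solution with $|e_i| \leq 1/2$ exists iff $k \geq t$ (giving $I = 1$). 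When $k < t$ the minimum jumps to $I = 2$, and feasibility of $I \leq 2$ follows from \cref{thm:tsparse}.

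Finally, I would compute the distribution of $\bar v$. As a sum of $n$ i.i.d.\ uniform $t$-subset indicators in $\F_2^m$, $\bar v$ has Fourier transform $\hat f(\theta)^n$ at $\theta \subseteq [m]$, where $\hat f(\theta) = \E_S[(-1)^{|S \cap \theta|}]$. The only characters with $|\hat f| = 1$ are $\theta = \emptyset$ and $\theta = [m]$, while all others satisfy $|\hat f(\theta)| \leq 1 - \Omega(1/m^2)$ (consistent with the spanningness bound in \cref{lem:tspanning}). Summing over the $2^m - 2$ nontrivial characters gives total variation distance $O(2^{m/2} e^{-\Omega(n/m^2)})$ to the uniform distribution on $\{v \in \F_2^m : \sum v_i \equiv nt \pmod 2\}$, which is negligible compared with $\sqrt{m \log n / n}$ whenever $n = \Omega(m^3 \log^2 m)$. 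For $n$ even this gives $\Pr[\bar v = 0] \approx 2^{-m+1}$; for $n$ odd the weight $k$ is distributed as $Y \sim B(m, 1/2)$ conditional on $Y \equiv t \pmod 2$. Substituting into the formula for $I$ yields the stated probabilities. The main obstacle is the parity case analysis, where one must carefully track divisibility modulo $t$ alongside the $|e_i| \leq 1/2$ constraint and verify that the condition collapses uniformly to $k \geq t$ for $n$ odd regardless of $t$'s parity; the Fourier step is more routine.
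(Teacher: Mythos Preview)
Your proposal is correct and follows essentially the same route as the paper: reduce $\disc(M)$ to $d_\infty(M\mathbf 1, 2\cL)$ via the local limit theorem (you make the deterministic lower bound $\disc(M)\geq I(M)$ explicit, which the paper leaves implicit), compute this distance by a parity case analysis on $n$, and show that $M\mathbf 1\bmod 2$ is nearly uniform on the appropriate parity class by Fourier analysis on $\F_2^m$. The one difference is the character bound: the paper's \cref{lem:markovmixing} proves $|\hat f(y)|\leq 1-1/m$ for $y\notin\{\mathbf 0,\mathbf 1\}$ by a short direct combinatorial argument (not via spanningness, which concerns the real rather than the $\F_2$ Fourier transform), giving a sharper mixing estimate than your $1-\Omega(1/m^2)$, though both suffice in the regime $n=\Omega(m^3\log^2 m)$.
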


\begin{proof}[Proof of \cref{thm:tsparse2}] The proof is identical to that of \cref{thm:tsparse} except we do additional work to bound $d_* (M \mathbf 1, 2 \cL)$ instead of just using $2 \rho_*(\cL)$. There are two cases:
\begin{description}
\item[Case 1: $n$ is odd.] The coordinates of $M \mathbf 1$ sum to $nt$. The lattice $2\cL$ is the integer vectors with even coordinates whose sum is divisible by $2t$. Thus, in order to move $M \mathbf 1$ to $2 \cL$, each odd coordinate must be changed to even and the total sum must be changed by an odd number times $t$. The number of odd coordinates has the same parity as $t$, so we may move $M$ to $2\cL$ by changing each coordinate by at most $1$ if and only if the number of odd coordinates is at least $t$.
\item[Case 2: $n$ is even.] In this case, the total sum of the coordinates must be changed by an \emph{even} number times $t$. The parity of the number of odd coordinates is even, so the odd coordinates can all be changed to even preserving the sum of all the coordinates. This shows may move $M$ to $2\cL$ by changing each coordinate by at most $1$, and by at most $0$ if all the coordinates of $M \mathbf 1$ are even. 
 \end{description}
Thus, in the even case the discrepancy is at most $1$ with the same failure probability and $0$ with the probability all the row sums are even, and in the odd case the discrepancy is at most $1$ provided the number of odd coordinates of $M \mathbf 1$ is at least $t$. Observe that the vector of row sums of a $m\times n$ random $t$-sparse matrix taken modulo $2$ is distributed as the sum of $n$ random vectors of Hamming weight $t$ in $\F_2^m$. \cref{lem:markovmixing} below shows that the Hamming weight of this vector is at most $O(e^{-2n/m + 3m})$ in total variation distance from a binomial $\operatorname{B}(m, 1/2)$ conditioned on having the same parity as $nt$. Because this is dominated by $\sqrt{(m/n)\log n}$ for $n \geq m^3 \log^2m$, the theorem is proved. \end{proof}
\begin{lem}[number of odd rows]\label{lem:markovmixing}
Suppose $X_n$ is a sum of $n$ uniformly random vectors of Hamming weight $0 <t<m$ in $\F_2^m$ and $Z_n$ is a uniformly random element of $\F_2^m$ with Hamming weight having the same parity as $nt$. If $d_{TV}$ denotes the total variation distance, then

$$ d_{TV}(X_n,  Z_n) = O(e^{- 2n/m + m}).$$

\end{lem}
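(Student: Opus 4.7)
The plan is to analyze the $n$-fold convolution $X_n = \mu^{*n}$, where $\mu$ is uniform on $\{1_T : T \in \binom{[m]}{t}\}$, via Fourier analysis on the abelian group $\F_2^m$. The characters $\chi_S(x) = (-1)^{\langle S, x \rangle}$ diagonalize convolution, so
\[
\widehat{X_n}(S) \;=\; \hat\mu(S)^n, \qquad \hat\mu(S) \;=\; \binom{m}{t}^{-1}\!\!\sum_{|T|=t}\!(-1)^{|S \cap T|} \;=\; K_t(|S|;m)\Big/\binom{m}{t},
\]
a normalized Krawtchouk polynomial depending only on $k = |S|$.

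First I would isolate the characters saturating $|\hat\mu(S)| = 1$: these are precisely $S = \emptyset$ (value $1$) and $S = [m]$ (value $(-1)^t$). Their contribution to $\widehat{X_n}$ matches $\widehat{Z_n}$ term by term, since $Z_n$ is uniform on the coset of the even-weight subgroup of parity $nt \bmod 2$ and its Fourier transform is therefore supported exactly on $\{\emptyset,[m]\}$. Plancherel plus Cauchy--Schwarz applied to the density difference $p_{X_n} - p_{Z_n}$ then yield
\[
4\, d_{TV}(X_n, Z_n)^2 \;\le\; \sum_{S \notin \{\emptyset,[m]\}} |\hat\mu(S)|^{2n}.
\]

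The heart of the proof is a uniform spectral-gap estimate: $|\hat\mu(S)|^2 \le 1 - c/m$ for every $1 \le |S| \le m-1$ and an absolute constant $c$ (one wants $c \approx 4$). Granted this, the sum above is at most $2^m e^{-cn/m}$, so $d_{TV}(X_n,Z_n) \le 2^{m/2} e^{-cn/(2m)} = O(e^{m - 2n/m})$ once $\log 2$ is absorbed into the coefficient of $m$ in the exponent.

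Establishing this spectral gap is the main obstacle. The identity
\[
1 - \hat\mu(\chi_S)^2 \;=\; 4\,\Pr_T\big[|S \cap T|\text{ odd}\big] \cdot \Pr_T\big[|S \cap T|\text{ even}\big]
\]
reduces it to lower-bounding the odd/even balance of the hypergeometric distribution on $|S \cap T|$. For the extremes $|S| \in \{1, m-1\}$ a direct computation gives $\hat\mu(S) = (m - 2t)/m$, whence $1 - |\hat\mu|^2 = 4t(m-t)/m^2 \ge 4(m-1)/m^2$. For intermediate $|S| = k$ I would either use the generating function $K_t(k;m) = [z^t](1-z)^k(1+z)^{m-k}$ together with the factorization $(1-z)^k(1+z)^{m-k}$ to control cancellation, or argue directly from the hypergeometric probabilities $p_j = \binom{k}{j}\binom{m-k}{t-j}/\binom{m}{t}$: since $p_j/p_{j+1} = (j+1)(m-k-t+j+1)/((k-j)(t-j))$ is bounded on either side of the mode, two consecutive terms of opposite parity each carry $\Omega(1/\sqrt m)$ of the mass, delivering the desired $\Omega(1/m)$ gap.
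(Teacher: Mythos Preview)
Your overall strategy coincides with the paper's: Fourier analysis on $\F_2^m$, identification of the two characters $S=\emptyset$ and $S=[m]$ as exactly those with $|\hat\mu(S)|=1$ (and as the support of $\widehat{Z_n}$), then Plancherel together with Cauchy--Schwarz to reduce the total variation bound to a uniform spectral gap $|\hat\mu(S)|\le 1-\Omega(1/m)$ for $1\le|S|\le m-1$.

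The one genuine difference is how the spectral gap is obtained. Your route through the identity $1-\hat\mu(S)^2=4\Pr[|S\cap T|\text{ odd}]\Pr[|S\cap T|\text{ even}]$ and a hypergeometric mode/Krawtchouk analysis is correct in principle but leaves real work undone, and the $\Omega(1/\sqrt m)$-mass argument for two consecutive parities would need a careful treatment of the edge cases. The paper instead gives a one-line conditioning trick: by the symmetries $t\leftrightarrow m-t$ and $S\leftrightarrow [m]\setminus S$ one may assume $t\le s:=|S|\le m/2$; now reveal the $t$ elements of the random set $T$ one at a time. After the first $t-1$ elements are fixed, among the $m-t+1$ remaining elements at least one lies in $S$ (since at most $t-1<s$ of $S$ have been used) and at least one lies outside $S$ (since $m-s\ge m/2\ge t>t-1$), so the conditional expectation of $(-1)^{|S\cap T|}$ is a rational number strictly in $(-1,1)$ with denominator at most $m$, hence of absolute value at most $1-1/m$. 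Averaging gives $|\hat\mu(S)|\le 1-1/m$ directly, with no case analysis or Krawtchouk estimates required. Your approach would recover the same bound with more effort; the paper's buys brevity and a clean constant.
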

\begin{proof} Though we will not use the language of Markov chains, the following calculation consists of showing that the random walk on the group $\F_2^m$ mixes rapidly by showing it has a spectral gap.

Let $X$ be a random element $\F_2^m$ of Hamming weight $t$. Let $f$ be the probability mass function of $X$. Let $h_n$ be the probability mass function of $Z_n$. By the Cauchy-Schwarz inequality, it is enough to show that the probability mass function $f_n$ of the sum of $n$ i.i.d. copies of $X$ satisfies 
\begin{align*}\sum_{x \in \F_2^m} |f_n(x) - h_n(x)|^2 = O(e^{-2n/m})\end{align*}
For $y \in \F_2^m$, let $\chi_y: \F_2^m \to \{\pm 1\}$ be the Walsh function $\chi_y(x) = (-1)^{y \cdot x}$. The Fourier transform of a function $g: \F_2^m \to \R$ is the function $\widehat{g}: \F_2^m \to \R$ given by $ \widehat{g}(y) = \sum_{x \in \F_2^m} g(x)\chi_y(x).$
The function $f_n$ satisfies $\widehat{f}_n = (\widehat{f}\;)^n$. Note that $\widehat{h}_n(\mathbf 0) =\widehat{f}_n(\mathbf 0)= 1$, $\widehat{h}_n(\mathbf 1) = \widehat{f}_n(\mathbf 1) = (-1)^{nt}$, and $\widehat{h} = 0$ elsewhere. By Plancherel's identity,   
\begin{align}\sum_{x \in \F_2^m}|f_n(x) - h_n(x)|^2 &= \E_{y \in \F_2^m} |\widehat{f}(y)^n -  \widehat{h}_n(y)|^2\\
& = \sum_{y \in \F_2^m, y \neq \mathbf0, y \neq \mathbf 1} 2^{-n}|\widehat{f}(y)|^{2n}.\label{walsh}
\end{align}
Now we claim that $|\widehat{f}(y)| \leq 1 - \frac{1}{m}$ for $y \not \in \{\mathbf 0, \mathbf1\}$, which would imply \cref{walsh} is at most 
$ (1 - 1/m)^{2n} \leq e^{-2n/m}.$ Indeed, if the Hamming weight of $y$ is $s$, then $\widehat{f}(y)$ is exactly the expectation of $(-1)^{|S \cap T|}$ where $T$ is a random $t$-set and $S$ a fixed $s$-set. By symmetry we may assume $t \leq s$, and since we are only concerned with the absolute value of this quantity, by taking the complement of $S$ we may assume $s \leq m/2$. We may choose the elements of $T$ in order; it is enough to show that the expectation of $(-1)^{|S \cap T|}$ is at most $1 - 1/m$ in absolute value even after conditioning on the choice of the first $t-1$ elements of $T$. Indeed, the value of $(-1)^{|S \cap T|}$ is not determined by this choice, so the conditional expectation is a rational number in $(-1,1)$ with denominator at most $m$, and hence at most $1 - 1/m$ in absolute value.\end{proof}
We'll now discuss a general method for bounding the spanningness of lattice random variables.


\subsection{Spanningness of lattice random variables}\label{sec:span}
Suppose $X$ is a finitely supported random variable on $\cL$. We wish to bound the spanningness $s(X)$ below. The techniques below nearly identical to those in \cite{LKP12}, in which spanningness is bounded for a very general class of random variables. 

We may extend spanningness for nonisotropic random variables.
\begin{defin}[nonisotropic spanningness]\label{defin:noniso_span}
A distribution $X$ with finite, nonsingular covariance $\E XX^\dagger = \Sigma$ determines a metric $d_X$ on $\R^m$ given by $d_X(\theta_1, \theta_2) = \|\theta_1 - \theta_2\|_X$ where the square norm $\|\theta \|^2_X$ is given by $\theta^\dagger \Sigma \theta = \E[ \langle X , \theta \rangle^2]$. 
Let 
$$\tilde{X}(\theta):= \sqrt{\E [|\langle \theta, X \rangle \bmod 1|^2]},$$
where $y \bmod 1$ is taken in $(-1/2, 1/2]$, and say $\theta$ is \emph{pseudodual} if $\tilde{X}(\theta) \leq d_X(\theta, \cL^*)/2$. Define the \emph{spanningness} $s(X)$ of $X$ by 
$$ s(X) := \inf_{\cL^* \not\ni \;\theta \textrm{ pseudodual}} \tilde{X}(\theta).$$ 
This definition of spanningness is invariant under invertible linear transformations $X \leftarrow AX$ and $\cL \leftarrow A \cL$; in particular, $s(X)$ is the same as $s(\Sigma^{-1/2} X)$ for which we have $\|\theta\|_{\Sigma^{-1/2}X} = \|\theta\|_2$. Hence, this definition extends the spanningness of \cref{defin:spanningness}.
\end{defin}

\subsubsection*{Strategy for bounding spanningness}
Our strategy for bounding spanningness below is as follows: we need to show that if $\mathbf \theta$ is \emph{pseudodual but not dual}, i.e., $0 < \tilde{X}(\theta) \leq d(x, \cL^*)/2$, then $\tilde{X}(\theta)$ is large. We do this in the following two steps.

\begin{enumerate}
\item Find a $\delta$ such that if all $|\langle  x,  \theta \rangle \bmod 1| \leq \frac{1}{\beta}$ for all $x \in \supp X$, then $\tilde{X}(\theta) \geq d_X( \theta, \cL^*)$. Such $\theta$ cannot be pseudodual without being dual.
\item $X$ is $\alpha$-\emph{spreading}: for all $\theta$,
$$\tilde{X}(\theta) \geq \alpha \sup_{x \in \supp X} |\langle  x,  \theta \rangle \bmod 1|$$
\end{enumerate}
Together, if $\theta$ is pseudodual, then $\tilde{X}(\theta) \geq \alpha/\beta$.

To achieve the first item, we use bounded integral spanning sets as in \cite{LKP12}. The following definitions and lemmas are nearly identical to arguments in the proof of Lemma 4.6 in \cite{LKP12}.  
\begin{defin}[bounded integral spanning set]\label{elvee}
Say $B$ is an integral spanning set of a subspace $H$ of $\R^m$ if $B \subset \Z^m$ and $\spn_\R B= H$. Say a subspace $H \subset \R^m$ has a \emph{$\beta$-bounded integral spanning set} if $H$ has an integral spanning set $B$ with 
$\max\{\|b\|_1: b \in B\} \leq \beta.$
\end{defin}
\begin{defin}\label{gammavee}Let $A_X$ denote the matrix whose columns are the support of $X$ (in some fixed order). Say $X$ is \emph{$\beta$-bounded} if $\ker A_X$ has a $\beta$-bounded integral spanning set.
\end{defin}

\begin{lem}\label{dualclose}
Suppose $X$ is $\beta$-bounded. Then either
$$\max_{x \in \supp(X)}|\langle x, \theta \rangle \bmod{1}| \geq \frac{1}{\beta}$$
or 
$$\tilde{X}(\theta) \geq d_X(\theta, \cL^*)$$

\end{lem}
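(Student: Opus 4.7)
The plan is to prove the contrapositive of the first alternative: assume every $x \in \supp(X)$ satisfies $|\langle x,\theta\rangle \bmod 1| < 1/\beta$, and build an element of $\cL^*$ close enough to $\theta$ that $d_X(\theta,\cL^*) \le \tilde X(\theta)$.

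First, for each $x \in \supp(X)$ write $\langle x,\theta\rangle = n_x + f(x)$ with $n_x \in \Z$ and $f(x) := \langle x,\theta\rangle \bmod 1 \in (-1/2,1/2]$, so that $|f(x)| < 1/\beta$ by assumption. Let $r \in \R^{|\supp X|}$ be the vector with entries $f(x)$, indexed consistently with the columns of $A_X$. The heart of the argument is to produce a vector $\psi \in \R^m$ with $A_X^\dagger \psi = r$, i.e. $\langle \psi, x\rangle = f(x)$ for every $x \in \supp(X)$. Once we have such $\psi$, the vector $\theta - \psi$ satisfies $\langle \theta-\psi, x\rangle = n_x \in \Z$ for all $x \in \supp X$, hence $\theta - \psi \in \cL^*$ because $\cL = \spn_\Z \supp X$. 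Then
\[
d_X(\theta,\cL^*) \le \|\psi\|_X, \qquad \|\psi\|_X^2 = \E[\langle \psi,X\rangle^2] = \E[f(X)^2] = \tilde X(\theta)^2,
\]
giving $d_X(\theta,\cL^*) \le \tilde X(\theta)$ as desired.

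The main obstacle is showing that the function $f$ on $\supp(X)$ actually extends to a linear functional; this is precisely where the $\beta$-bounded integral spanning set comes in. Solvability of $A_X^\dagger \psi = r$ is equivalent to $r \perp \ker A_X$. Let $B \subset \Z^m$ be a $\beta$-bounded integral spanning set of $\ker A_X$ and fix $b \in B$. Linearity gives $\sum_x b_x\langle \theta,x\rangle = \langle \theta, \sum_x b_x x\rangle = 0$, so $\sum_x b_x f(x) = -\sum_x b_x n_x \in \Z$. On the other hand,
\[
\Bigl|\sum_x b_x f(x)\Bigr| \le \|b\|_1 \cdot \max_{x}|f(x)| < \beta \cdot \tfrac{1}{\beta} = 1,
\]
so this integer must be $0$; i.e., $r \perp b$. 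Since $B$ $\R$-spans $\ker A_X$, we conclude $r \in (\ker A_X)^\perp = \Img(A_X^\dagger)$, producing the required $\psi$ and completing the proof. The argument closely mirrors the proof of Lemma 4.6 in \cite{LKP12}; the only delicate point is the strict inequality $|f(x)| < 1/\beta$, which is guaranteed by negating the first alternative and is precisely what forces the integer rounding errors to obey every integral linear relation satisfied by the support vectors.
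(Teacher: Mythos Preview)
Your argument is correct and follows the same approach as the paper. The paper packages the construction of the dual lattice element as a black-box citation to Claim~4.12 of \cite{LKP12} (their Claim~\ref{snapto}): given $\max_x |r_x|<1/\beta$, there exists $l\in\cL^*$ with $\langle x,l\rangle=k_x$, and then $\tilde X(\theta)=\|\theta-l\|_X\ge d_X(\theta,\cL^*)$. You have simply inlined the proof of that claim, showing directly via the $\beta$-bounded integral spanning set that the fractional-part vector $r$ is orthogonal to $\ker A_X$ and hence lies in $\operatorname{Im}(A_X^\dagger)$; your $\psi$ is exactly $\theta-l$ in the paper's notation. One small notational slip: your spanning set $B$ lives in $\Z^{|\supp X|}$, not $\Z^m$, since $\ker A_X\subset\R^{|\supp X|}$ (the paper's Definition of integral spanning set is written for subspaces of $\R^m$, but applied to $\ker A_X$ one must read the ambient dimension accordingly).
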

\begin{proof}
To prove Lemma \ref{dualclose} we use a claim from \cite{LKP12} , which allows us to deduce that if $\langle x, \theta \rangle$ is very close to an integer for all $x$ then we can ``round" $\theta$ to an element of the dual lattice to get rid of the fractional parts.
\begin{claim}[Claim 4.12 of \cite{LKP12}]\label{snapto}Suppose $X$ is $\beta$ bounded, and define $r_x := \langle x, \theta \rangle \bmod{1} \in (-1/2, 1/2]$ and $k_x$ to be the unique integer such that $\langle x, \theta \rangle = k_x + r_x$. If 
$$\max_{x \in \supp(X)} |r_x| < 1/\beta$$ then there exists $l \in \cL^*$ with 
$$\langle x, l \rangle = k_x $$
for all $x \in \supp(X)$. 
\end{claim}
Now, suppose $\max_{x \in \supp(X)}|\langle x, \theta \rangle \bmod{1}| = \max_{x \in \supp(X)} |r_x|  < 1/\beta$. By Claim \ref{snapto}, exists $l \in \cL^*$ with $\langle x, l \rangle = k_x$ for all $x \in \supp(X)$.  By assumption,
$$ \tilde{X}(\theta) =  \sqrt{\E(\langle X, \theta\rangle \bmod 1)^2} = \sqrt{\E r_X^2}= \sqrt{\E \langle X,  \theta - l \rangle^2} \geq d_X(\theta, \cL^*),$$
proving Lemma \ref{dualclose}.\end{proof}
In order to apply Lemma \ref{dualclose}, we will need to bound $\tilde{X}(\theta)$ below when there is some $x$ with $|\langle x, \theta \rangle|$ fairly large.
\begin{defin}\label{alphavee}
Say $X$ is $\alpha$-spreading if for all $\theta \in \R^m$,
$$\tilde{X}(\theta) \geq \alpha \cdot \sup_{x \in \supp X}| \langle x, \theta \rangle \bmod{1}|.$$

\end{defin}
Combining \cref{dualclose} with \cref{alphavee} yields the following bound.
\begin{cor}\label{spreadcor} Suppose $X$ is $\beta$-bounded and $\alpha$-spreading. Then $s(X) \geq \frac{\alpha}{\beta}$.
\end{cor}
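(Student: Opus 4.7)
The plan is to directly combine \cref{dualclose} and \cref{alphavee}, using pseudoduality to rule out one of the two alternatives produced by the former.

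Fix any $\theta$ that is pseudodual but not in $\cL^*$. By definition of pseudodual, $\tilde{X}(\theta) \leq d_X(\theta, \cL^*)/2$, and since $\theta \notin \cL^*$, the distance $d_X(\theta,\cL^*)$ is strictly positive. I will apply \cref{dualclose}, which gives a dichotomy: either $\max_{x \in \supp X} |\langle x,\theta\rangle \bmod 1| \geq 1/\beta$, or $\tilde{X}(\theta) \geq d_X(\theta,\cL^*)$. The second alternative contradicts the pseudoduality inequality $\tilde{X}(\theta) \leq d_X(\theta,\cL^*)/2$ (since $d_X(\theta,\cL^*) > 0$), so the first alternative must hold.

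Having the first alternative in hand, I will invoke the $\alpha$-spreading property of \cref{alphavee}, which gives
\[
\tilde{X}(\theta) \;\geq\; \alpha \cdot \sup_{x \in \supp X} |\langle x,\theta\rangle \bmod 1| \;\geq\; \alpha \cdot \frac{1}{\beta} \;=\; \frac{\alpha}{\beta}.
\]
Since this bound holds for every pseudodual $\theta \notin \cL^*$, taking the infimum over such $\theta$ yields $s(X) \geq \alpha/\beta$, as desired.

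There is no real obstacle here: the corollary is just an immediate composition of the two lemmas, with pseudoduality serving only to eliminate the unwanted branch of \cref{dualclose}'s dichotomy. The only thing to double-check is that the case $\theta \in \cL^*$ is correctly excluded by the infimum in the definition of $s(X)$, so the strict positivity of $d_X(\theta,\cL^*)$ is justified throughout.
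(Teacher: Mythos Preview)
Your proof is correct and matches the paper's approach exactly: the paper states the corollary as an immediate consequence of combining \cref{dualclose} with the $\alpha$-spreading definition, using pseudoduality to eliminate the second branch of the dichotomy, which is precisely what you do.
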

A lemma of \cite{LKP12} immediately gives a bound on spanningness for random variables that are uniform on their support. 
\begin{lem}[Lemma 4.4 from \cite{LKP12}]\label{lem:lkp} Suppose $X$ is uniform on $\supp X \subset B_\infty( L')$ and for any two elements $x, y \in \supp X$ there is an invertible linear transformation $A$ such that $Ax= y$ and $X = AX$. Then $X$ is $$\Omega \left(\frac{1}{(m \log (L' m))^{3/2}} \right) \textrm{-spreading}.$$
In particular, if $X$ is $\beta$-bounded, then 
$$ s(X) = \Omega \left(\frac{1}{\beta(m \log (L' m))^{3/2}} \right).$$
\end{lem}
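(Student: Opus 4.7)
The lemma contains two assertions: first, that under the transitive-symmetry hypothesis $X$ is $\Omega((m\log(L'm))^{-3/2})$-spreading, and second (the ``in particular'' clause) that this yields the stated bound on $s(X)$ whenever $X$ is $\beta$-bounded. The second assertion is immediate from Corollary \ref{spreadcor}, which gives $s(X)\geq \alpha/\beta$ whenever $X$ is $\alpha$-spreading and $\beta$-bounded; so the plan is to spend essentially all effort on the spreading bound, which is the content of [LKP12, Lemma 4.4] and whose argument I would reproduce essentially unchanged.

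The plan for the spreading bound is as follows. Fix $\theta\in\mathbb{R}^m$, set $M=\sup_{x\in\supp X}|\langle x,\theta\rangle \bmod 1|$, and choose a maximizer $x^*\in\supp X$. I want to show that $\tilde X(\theta)\geq \alpha M$ with $\alpha=\Omega((m\log(L'm))^{-3/2})$. The transitive symmetry hypothesis supplies, for each $y\in\supp X$, an invertible $A_y$ with $A_y x^*=y$ that preserves the distribution and therefore permutes $\supp X$. Writing $\theta_y:=A_y^\dagger\theta$, one has the identity
\[
f(y):=\langle y,\theta\rangle\bmod 1 \;=\; \langle x^*,\theta_y\rangle\bmod 1,
\]
and moreover $\max_{z\in\supp X}|\langle z,\theta_y\rangle\bmod 1|=M$ for every $y$, since $A_y$ bijects $\supp X$ onto itself. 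Thus bounding $\tilde X(\theta)^2 = \mathbb{E}_y f(y)^2$ below by $\alpha^2 M^2$ amounts to a statement about how the orbit $\{\theta_y\}$ of $\theta$ under the dual group action distributes the single functional $\langle x^*,\cdot\rangle$ modulo $1$.

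The heart of the argument is a Dirichlet/pigeonhole step carried out in [LKP12]. Because $\supp X\subset B_\infty(L')$, the integer lattice $\spn_\Z \supp X$ has covolume at most $(L')^m$, and a standard Minkowski/successive-minima analysis of the dual lattice $\cL^*$ in the Voronoi-cell region $M\cdot [-1,1]^m$ yields a set of $k=O(m\log(L'm))$ witnesses $y_1,\dots,y_k\in\supp X$ whose residues $f(y_i)$ are ``spread out'' in $(-M,M]$ rather than all concentrated near zero. Specifically, one iteratively rounds $\theta$ toward $\cL^*$ using the $\beta$-bounded integer relations coming from the bound $L'$ on the support; each round either decreases $M$ by a controlled factor or produces a new witness whose residue is of order $M/k$. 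Averaging the squares of these $k$ witnesses gives $\tilde X(\theta)^2=\Omega(M^2/k^3)=\Omega\bigl(M^2/(m\log(L'm))^3\bigr)$, which is the claimed spreading constant.

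The principal obstacle, and the entire technical heart of the proof, is this iterative rounding/witness-production step, since it is what produces the specific exponent $3/2$: one factor of $m\log(L'm)$ controls the number of rounds needed to approach $\cL^*$ (essentially the ``depth'' of the Minkowski reduction in a lattice of covolume at most $(L')^m$), and the extra $\sqrt{m\log(L'm)}$ comes from the $\ell_2$ versus $\ell_\infty$ loss when passing from the worst witness to the root-mean-square $\tilde X(\theta)$. Once this step is in place, substituting into Corollary \ref{spreadcor} yields the stated spanningness bound immediately and completes the proof.
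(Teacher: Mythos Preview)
The paper does not give its own proof of this lemma: it is stated as Lemma~4.4 of \cite{LKP12} and cited without argument, so there is nothing in the paper to compare your attempt against. The only in-paper content is the ``in particular'' clause, and you handle that correctly---it is exactly Corollary~\ref{spreadcor} applied with $\alpha=\Omega((m\log(L'm))^{-3/2})$.

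For the spreading bound itself, your sketch has real gaps. Most seriously, the iterative-rounding step you describe invokes ``$\beta$-bounded integer relations coming from the bound $L'$ on the support,'' but $\beta$-boundedness (Definition~\ref{gammavee}) is a hypothesis on $\ker A_X$, not a consequence of $\supp X\subset B_\infty(L')$; it is \emph{not} assumed in the first assertion of the lemma and cannot be used to derive the spreading constant. Likewise, the claim that $\spn_\Z\supp X$ has covolume at most $(L')^m$ is false in general (two nearly-parallel short vectors already give a counterexample) and is not what drives the \cite{LKP12} argument. Finally, the bookkeeping that is supposed to produce the exponent $3/2$ is not actually carried out: you assert $k=O(m\log(L'm))$ witnesses with residues of order $M/k$ and then jump to $\tilde X(\theta)^2=\Omega(M^2/k^3)$, but neither the number of witnesses, the size of their residues, nor the appearance of $k^3$ rather than $k^2$ is derived from anything you wrote. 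Since the present paper defers entirely to \cite{LKP12} for this bound, the honest route is to consult that source directly rather than improvise a mechanism that mixes in hypotheses you are not entitled to.
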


\if{false}
\begin{proof}
We must show that if if $\eps^2 < 2 \frac{\alpha}{\beta^2}$ then 
$\inf_{D_X \setminus B_X(\eps)} \E (\langle \theta, X \rangle \bmod 1)^2 \geq \frac{1}{2} \eps^2.$
Suppose $\theta \in D_X \setminus B_X(\eps)$ so that in particular, $d_X(\theta, \cL^*) \geq \eps$. Suppose $\max_{x \in \supp(X)}|\langle x, \theta \rangle \bmod{1}| \geq \frac{1}{\beta}$;  by the assumption that $X$ is $\alpha$-spreading we have
$$\E[(\langle X, \theta \rangle \bmod{1})^2] \geq \frac{\alpha}{\beta^2}.$$
If $\max_{x \in \supp(X)}|\langle x, \theta \rangle \bmod{1}| < \frac{1}{\beta}$, we must be in the second case of Lemma \ref{dualclose} and so
$$\E(\langle X, \theta\rangle \bmod 1)^2 \geq \eps^2.$$
Since $\frac{\alpha}{\beta^2}, \eps^2 \geq \frac{1}{2} \eps^2$, the proof is complete.
\end{proof}
\fi


\subsection{Proof of \cref{lem:tspanning}}

Using the techniques from the previous section, we'll prove \cref{lem:tspanning}, which states that $t$-sparse random vectors have spanningness $\Omega(m^{-1})$. In particular, we'll prove that $t$-sparse random vectors are $4$-bounded and $\Omega(m^{-1})$-spreading and apply \cref{spreadcor}. 


\subsubsection{Random $t$-sparse vectors are $\Omega(m^{-1})$-spreading}
Note that \cref{lem:lkp} gives that $t$-sparse vectors are $\Omega \left(\frac{1}{(m \log (m))^{3/2}} \right) \textrm{-spreading}, $ but we can do slightly better due to the simplicity of the distribution $X$.

In order to show that $t$-sparse vectors are $c$-spreading, recall that we must show that if a \emph{single} vector $1_S$ has $|\langle \theta, 1_S \rangle \bmod 1| > \delta$, then $\E[|\langle \theta, X \rangle \bmod 1|^2] \geq c^2\delta^2$. We cannot hope for $c = o(m^{-1})$, because for small enough $\delta$ the vector $\theta = \delta (\frac{1}{t} 1_{[t]} - \frac{1}{m-t} 1_{m\setminus [t]})$ has $\langle \theta, 1_{[t]} \rangle \bmod 1 = \delta$ but $\E[|\langle \theta, X \rangle \bmod 1|^2] = \frac{1}{m-1} \delta^2$. Our bound is worse than this, but the term in \cref{eq:ntsparse} depending on the spanningness is not the largest anyway, so this does not hurt our bounds. 
 

\begin{lem}\label{sparsespread}
There exists an absolute constant \refstepcounter{const} \label{spreadconst}$c_{{\theconst}}>0$ such that random $t$-sparse vectors are $\frac{c_{\ref*{spreadconst}}}{m}$-spreading.
\end{lem}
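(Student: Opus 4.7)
The plan is to argue by contradiction. Fix $\theta \in \R^m$, let $f(T) := \langle 1_T, \theta\rangle \bmod 1 \in (-1/2, 1/2]$ for $T \in \binom{[m]}{t}$, and let $S^*$ attain $\delta_0 := \max_{|S|=t} |f(S)| = \sup_{x \in \supp X}|\langle x, \theta\rangle \bmod 1|$. Suppose toward contradiction that $\tilde X(\theta)^2 = \E_T f(T)^2 < (c/m)^2 \delta_0^2$ for a sufficiently small absolute constant $c > 0$. The broad idea is to show that this concentration of $f(T)$ forces almost all $\theta_i$ to be close modulo $1$ to a common $\theta^*$, and then to produce many sets $T$ with $|f(T)|$ comparable to $\delta_0$, contradicting the concentration.

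First I would apply Chebyshev: for any $K > 1$, the fraction of $K$-good $T$ (those with $|f(T)| \leq Kc\delta_0/m$) is at least $1 - 1/K^2$. Next I would translate this into pairwise closeness of the $\theta_i$ on the circle $\R/\Z$. If $T, T'$ are both $K$-good and $T' = T \setminus \{i\}\cup\{j\}$, then $f(T')-f(T) \equiv \theta_j-\theta_i \pmod 1$ forces $|(\theta_j - \theta_i) \bmod 1| \leq |f(T')| + |f(T)| \leq 2Kc\delta_0/m$ (valid once this is less than $1/4$). For a uniformly random triple $(T, i, j)$ with $i \in T, j \notin T$, both $T$ and $T'$ are marginally uniform over $\binom{[m]}{t}$ (a direct counting argument), so with probability at least $1 - 2/K^2$ both are $K$-good. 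Hence at least $1 - 2/K^2$ of the ordered pairs $(i, j)$ with $i \neq j$ are \emph{close} in the sense that $|(\theta_j - \theta_i) \bmod 1| \leq 2Kc\delta_0/m$.

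Let $G$ be the close graph on $[m]$. Its average degree is at least $(1 - 2/K^2)(m-1)$, so some vertex $v$ attains this bound; setting $\theta^* := \theta_v$ and $I := N_G(v)\cup\{v\}$ gives $|I| \geq (1 - 2/K^2)m$ together with $|\theta_i - \theta^* \bmod 1| \leq \varepsilon := 2Kc\delta_0/m$ for all $i \in I$. Thus almost all coordinates of $\theta$ cluster near $\theta^*$ on the circle.

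Finally I would derive the contradiction by exhibiting many $T$ with $|f(T)| \geq \delta_0/2$. For any $T = S^*\setminus\{a_1,\ldots,a_k\}\cup\{b_1,\ldots,b_k\}$ (with $a_\ell \in S^*, b_\ell \notin S^*$) such that each pair $(a_\ell, b_\ell)$ is close in $G$, the identity $f(T) \equiv \delta_0 + \sum_\ell (\theta_{b_\ell} - \theta_{a_\ell}) \pmod 1$ together with $\big|\sum_\ell (\theta_{b_\ell} - \theta_{a_\ell})\bmod 1\big| \leq k\varepsilon \leq t\varepsilon \leq 2Kc\delta_0$ yields $|f(T)| \geq \delta_0 - 2Kc\delta_0 \geq \delta_0/2$ once $c < 1/(4K)$. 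Since the bipartite graph $G \cap (S^* \times ([m]\setminus S^*))$ misses only $O(m^2/K^2)$ edges, a deficiency version of Hall's theorem should imply that a constant fraction of all $T \in \binom{[m]}{t}$ admit such a close matching to $S^*$, across all distances $k$. For those $T$, $|f(T)| \geq \delta_0/2 > Kc\delta_0/m$, giving more non-good sets than the $\binom{[m]}{t}/K^2$ allowed by Chebyshev, hence a contradiction for a suitable absolute constant $K$. The main obstacle I foresee is precisely this final counting step: establishing that a constant fraction of $T \in \binom{[m]}{t}$ admit a close matching to $S^*$, uniformly over $t \in \{1,\ldots,m-1\}$ and for adversarial placement of the few bad vertices in $I^c$, especially when $|S^* \cap I^c|$ is not small. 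A careful case analysis based on $|S^* \cap I|$, or a sharper choice of $v$ (possibly inside $S^*$, by averaging $\deg_{\bar G}$ over $S^*$), is likely required to make this count robust in all parameter regimes.
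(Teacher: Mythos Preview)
Your approach is genuinely different from the paper's, and the gap you flag in the final step is real, not merely technical.

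The paper's proof is direct rather than by contradiction. Fixing a set (WLOG $[t]$) with $w([t]) := |\langle \theta, 1_{[t]}\rangle \bmod 1| = \delta$, it constructs a \emph{random} integer decomposition $1_{[t]} = \sum_S \lambda_S 1_S$: take a uniformly random permutation $\sigma$, write $1_{[t]} = 1_{\sigma([t])} + \sum_{i\in[t]:\,i\neq\sigma(i)}(e_i - e_{\sigma(i)})$, and express each $e_i - e_{\sigma(i)}$ as $1_{R_i} - 1_{R_i - i + \sigma(i)}$ for a random $t$-set $R_i$ containing $i$ but not $\sigma(i)$. Subadditivity of $|\cdot \bmod 1|$ gives $\delta \leq \sum_S w(S)\,\E_\lambda|\lambda_S|$, and a short computation shows $\E_\lambda|\lambda_S| \leq 2m\binom{m}{t}^{-1}$ uniformly in $S$. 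Hence $\E_S[w(S)] \geq \delta/(2m)$ and $\E_S[w(S)^2] \geq (\E_S[w(S)])^2 \geq \delta^2/(4m^2)$. No graph or matching structure is needed; the whole argument is a few lines once the random decomposition is written down.

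Your matching step can genuinely fail as stated. Suppose $S^*$ contains a vertex $a$ whose close neighbors all lie inside $S^*$; this costs only $m-t$ non-edges, well within your $O(m^2/K^2)$ budget when $t$ is moderate and $m$ is large. Then for every $T$ with $a\notin T$ --- a $(m-t)/m$ fraction of all $t$-sets --- the bipartite close graph between $S^*\setminus T$ and $T\setminus S^*$ leaves $a$ isolated, so no close matching exists. Nothing in your argument rules out such $a$: the clustering of most $\theta_i$ near $\theta^*$ gives no control over $\theta_a$ for $a\in I^c\cap S^*$, and your freedom in choosing $v$ does not let you force $S^*\subseteq I$. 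A deficiency version of Hall applied to a single bipartite graph does not address this, because the bipartite graph varies with $T$ and the obstruction is a vertex of $S^*$, present in the left part for a constant fraction of $T$. Any salvage would likely have to abandon the matching-to-$S^*$ idea and instead reason directly about the value $t\theta^*\bmod 1$ together with separate control of the (few) contributions from $S^*\cap I^c$; that is essentially a different argument, and it is not clear it goes through uniformly in $t$. The paper's random-decomposition trick sidesteps all of this.
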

\begin{proof}
If $t = 0$ or $t = m$, then $t$-sparse vectors are trivially $1$-spreading. Suppose there is some $t$-subset of $[m]$, say $[t]$ without loss of generality, satisfying $|\langle \theta, 1_{[t]}\rangle \bmod 1| = \delta > 0$. For convenience, for $S \in \binom{[m]}{t}$, define 
$$w(S) := |\langle \theta, 1_{S}\rangle \bmod 1|.$$ We need to show that $w([t]) = \delta$ implies $\E_S w(S)^2 =\Omega(m^{-2}\delta^2)$. To do this, we will define random integer coefficients $\lambda = \left(\lambda_S: S \in \binom{[m]}{t}\right)$ such that
$$1_{[t]} = \sum_{S \in \binom{[m]}{t}} \lambda_S 1_S.$$
Because $|a + b \bmod 1| \leq |a \bmod 1| + |b \bmod 1|$ for our definition of $\bmod 1$, we have the lower bound 
\begin{align}\delta = w([t]) \leq \E_{\lambda} \sum_{S\in \binom{[m]}{t}}  w(S) |\lambda_S|=  \sum_{S \in \binom{[m]}{t}} w(S) \cdot \E_\lambda |\lambda_S|.\label{eq:coeffs}\end{align}
It is enough to show $\E_\lambda |\lambda_S|$ is small for all $S$ in $\binom{m}{t}$, because then $\E [  w(S)]$ is large and 
$$\E[ w(S)^2] \geq \E[ w(S)]^2.$$ We now proceed to define $\lambda$. Let $\sigma$ be a uniformly random permutation of $[n]$ and let $T = \sigma({[t]})$. 
We have 
\begin{align} 1_{[t]} = 1_T + \sum_{i \in {[t]}: i \neq \sigma (i)} e_i - e_{\sigma(i)}, \label{eq:permutation}\end{align}
where $e_i$ is the $i^{th}$ standard basis vector. Now for each $i \in {[t]}: i \neq \sigma (i)$ pick $R_i$ at random conditioned on $i \in R_i$ but $\sigma(i) \not\in R_i$. Then 
\begin{align}e_i - e_{\sigma(i)} = 1_{R_i} - 1_{R_i - i + \sigma(i)}. \label{eq:difference}
\end{align}To construct $\lambda$, first define the indicator $e^U$ by $ e^U_S:= 1_{S =U}$ for $U, S \in \binom{[m]}{t}$, and then define 
$$ \lambda = e^T - \sum_{i \in {[t]}: i \neq \sigma (i)} e^{R_i} - e^{R_i - i + \sigma(i)}.$$
By \cref{eq:permutation} and \cref{eq:difference}, this choice satisfies $\sum \lambda_S 1_S = 1_{[t]}$. 

It remains to bound $\E_\lambda|\lambda_S|$ for each $S$. We have
\begin{align} \E_\lambda |\lambda_S| &\leq \Pr [T = S]\label{eq:tee}\\
&+ \sum_{i = 1}^t \Pr[\sigma(i) \neq i \textrm{ and }R_i = S]\label{eq:rplus}\\
& + \sum_{i = 1}^t \Pr[\sigma(i) \neq i \textrm{ and } R_i - i + \sigma(i) = S].\label{eq:rminus}
\end{align}
since $T$ is a uniformly random $t$-set, $\cref{eq:tee} = \binom{m}{t}^{-1}$. Next we have 
$\Pr[\sigma(i) \neq i \textrm{ and }R_i = S] = \frac{m-1}{m} \Pr[R_i = S]$. However, $R_i$ is chosen uniformly at random among the $t$-sets containing $i$, so 
$$\Pr[R_i = S] =\binom{m-1}{t-1}^{-1} 1_{i \in S} = \frac{m}{t}  \binom{m}{t}^{-1} 1_{i \in S}. $$
Thus $\cref{eq:rplus} \leq  (m-1)  \binom{m}{t}^{-1}.$ Similarly, $R_i - i + \sigma(i)$ is chosen uniformly at random among sets \emph{not} containing $i$, so 
$\Pr[ R_i - i + \sigma(i) = S] =  \binom{m}{t-1}^{-1} 1_{i \not\in S} = \frac{m-t + 1}{t} \binom{m}{t}^{-1} 1_{i \not\in S}$. Thus 
$\cref{eq:rplus} \leq  (m-1)  \binom{m}{t}^{-1}.$ Thus, for every $S$ we have $\E_\lambda|\lambda_S| \leq 2 m \binom{m}{t}^{-1}.$ Combining this with \cref{eq:coeffs} we have 
$$\E[ w(S)^2] \geq \E[ w(S)]^2 \geq (2m)^{-2} \delta^2.$$

We may take $c_{\ref*{spreadconst}} = 1/2$. \end{proof}

\subsubsection{Random $t$-sparse vectors are $4$-bounded}

Recall that $A_X$ is a matrix whose columns consist of the finite set $\supp X = \left\{1_S: S \in \binom{[m]}{t}\right\}.$ We index the columns of $A_X$ by $\binom{[m]}{t}$. 
\begin{lem}\label{4bd}
$X$ is $4$-bounded. That is, $\ker A_X$ has a $4$-bounded integral spanning set.
\end{lem}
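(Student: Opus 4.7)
The plan is to exhibit a collection of integer vectors in $\ker A_X$ of $\ell_1$-norm at most $4$ whose real span equals $\ker A_X$. The boundary cases $t \in \{0, 1, m-1, m\}$ are trivial: the kernel has dimension at most $1$ and is easily spanned (e.g., by $e^{\emptyset}$ when $t = 0$, and by the empty set otherwise). For the main range $2 \leq t \leq m-2$, I will use \emph{swap-squares}: vectors of the form
$$w = e^S - e^{S'} - e^R + e^{R'},$$
where $(S, S')$ and $(R, R')$ are two swap pairs of $t$-sets sharing the same oriented symmetric difference, i.e., $S \setminus S' = R \setminus R' = \{i\}$ and $S' \setminus S = R' \setminus R = \{j\}$. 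Such a $w$ lies in $\ker A_X$ because $1_S - 1_{S'} = e_i - e_j = 1_R - 1_{R'}$, and clearly $\|w\|_1 \leq 4$.

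The heart of the argument is to show that swap-squares span $\ker A_X$ over $\R$. I would introduce the Johnson graph $J(m, t)$ with vertex set $\binom{[m]}{t}$ and (oriented) edge set $E$ consisting of swap pairs $(S, S')$ with $|S \cap S'| = t-1$, together with the boundary map $L \colon \R^E \to \R^{\binom{[m]}{t}}$ sending $(S, S')$ to $e^S - e^{S'}$. Summing the row equations of $A_X f = 0$ yields $\sum_S f(S) = 0$ for any $f \in \ker A_X$, so by connectivity of $J(m, t)$ we have $\ker A_X \subset \operatorname{Im} L$. Writing $f = L(c)$, the condition $A_X f = 0$ translates to saying that $c$ is a divergence-free flow on the ``swap multigraph'' $K_m^*$ on vertex set $[m]$, which has $\binom{m-2}{t-1}$ parallel edges between each pair $\{i,j\}$. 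The natural projection $\pi \colon E \to \text{edges of } K_m^*$ is a bijection, so it identifies $\ker(A_X L)$ with the cycle space of $K_m^*$.

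The cycle space of $K_m^*$ is generated by (i) parallel 2-cycles (differences of two parallel edges between the same endpoints), each of which $L$-maps to a single swap-square; and (ii) triangle cycles inherited from the underlying simple $K_m$, with an arbitrary choice of parallel representative on each edge. For each triangle $i \to j \to k \to i$ in $K_m$, the crucial observation is that it admits a canonical lift to a 3-cycle $S_0 \to S_0 - i + j \to S_0 - i + k \to S_0$ in $J(m, t)$, where $S_0 \in \binom{[m]}{t}$ is any set containing $i$ and disjoint from $\{j, k\}$; such $S_0$ exists precisely because $2 \leq t \leq m-2$. This 3-cycle lies in $\ker L$. Consequently, for any other choice of parallel representatives realizing the triangle, the resulting flow differs from the canonical one by a sum of parallel 2-cycles, and therefore $L$-maps to a sum of swap-squares. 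Combining, every $f \in \ker A_X$ is an integer combination of swap-squares of $\ell_1$-norm at most $4$.

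The main obstacle is verifying the cycle-space decomposition of the multigraph $K_m^*$ into parallel 2-cycles and triangle cycles; this rests on the standard fact that the cycle space of the simple $K_m$ (for $m \geq 3$) is generated by triangles, combined with careful bookkeeping of the parallel-edge choices to push through to the multigraph.
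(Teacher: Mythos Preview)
Your argument is correct, and it takes a genuinely different route from the paper's. The paper fixes a distinguished element $1 \in [m]$ and uses only those swap-squares in which both swaps replace $1$ by some $j$; it then argues by \emph{dimension counting}: one computes $\dim \ker A_X = \binom{m}{t} - m$, fixes reference vectors $f_j = e_{T_j'} - e_{T_j}$, and shows that the span of $\{e_{S'} - e_S - f_j : S' \to_j S\}$ has dimension at least $\binom{m}{t} - m$ by proving that a certain directed graph $G$ on $\binom{[m]}{t}$ is weakly connected. By contrast, your proof is constructive and homological: you realize $\ker A_X$ as the image under the Johnson-graph boundary map $L$ of the cycle space of the multigraph $K_m^*$, and then decompose that cycle space into parallel $2$-cycles (which $L$ sends to swap-squares) and triangles (which admit lifts to $3$-cycles in $J(m,t)$ lying in $\ker L$). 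The paper's approach is more elementary---it needs only a connectivity check and rank--nullity---and it yields a strictly smaller spanning set (only swap-squares through the vertex $1$). Your approach invokes the standard but slightly heavier fact that triangles generate the cycle space of $K_m$, but in return it is coordinate-free, explains \emph{why} the swap-squares suffice, and would adapt more readily to other column supports whose ``swap graph'' has a tractable cycle space.
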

Before we prove the lemma we establish some notation. We have $A_X:\R^{\binom{[m]}{t}} \to \R^m$. Let $e_S \in \R^{\binom{[m]}{t}}$ denote the standard basis element with a one in the $S$ position and 0 elsewhere. For $i \in [m]$, $e_i \in \R^m$ denotes the standard basis vector with a one in the $i^{th}$ position and 0 elsewhere.

\begin{defin}[the directed graph $G$]
For  $S, S' \in \binom{[m]}{t}$ we write $S' \to_j S$ if $1 \in S'$, $j \not \in S'$ and
$S$ is obtained by replacing 1 by $j$ in  $S'$.  Let $G$ be the directed graph with $V(G) = \binom{[m]}{t}$ and 
$S'S \in E(G)$ if and only if $S'\to_j S$ for some $j \in S\setminus S'$.  Thus every set containing 1 has out-degree
$m-t$  and in-degree 0 and every set not containing 1 has in-degree $t$ and out-degree 0. 
\end{defin}

The following proposition implies Lemma \ref{4bd}. Note that if $S'\to_j S$, then $1_{S'} - 1_{S} = e_1 - e_j$.
\begin{prop}\label{spanning}
$$\cS = \bigcup_{j = 2}^{m}\{e_{S'} - e_S + e_T - e_{T'}: S' \to_j S \textrm{ and } T' \to_j T\}$$ is a spanning set for $\ker A_X$.
\end{prop}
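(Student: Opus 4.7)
The containment $\spn_\R \cS \subseteq \ker A_X$ is a one-line check: for any $S' \to_j S$ we have $1_{S'} - 1_S = e_1 - e_j$, and this depends only on the label $j$ (not on the particular edge), so $A_X(e_{S'} - e_S + e_T - e_{T'}) = (e_1 - e_j) - (e_1 - e_j) = 0$. The real content is the reverse inclusion, and for that I plan to use the ``boundary map'' of the graph $G$. Concretely, define $B \colon \R^{E(G)} \to \R^{V(G)} = \R^N$ on the edge basis by $B(\chi_{(S',S)}) = e_{S'} - e_S$. Then every generator of $\cS$ is of the form $B(\chi_e - \chi_f)$ for a pair of same-labelled edges $e,f \in E_j$.

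The key algebraic observation is the description of $A_X \circ B$. Since every $j$-labelled edge is sent to the same vector $e_1 - e_j$ by $A_X \circ B$, we get
\[
\ker(A_X \circ B) \;=\; \Bigl\{\phi \in \R^{E(G)} : \sum_{e \in E_j} \phi_e = 0 \text{ for every } j = 2,\dots,m\Bigr\}.
\]
On the other hand, the set $\{\chi_e - \chi_f : e,f \in E_j,\; j=2,\dots,m\}$ manifestly spans exactly this subspace (for each label $j$, the differences of indicators of edges in $E_j$ span the ``zero-sum'' subspace of $\R^{E_j}$). Therefore $\spn_\R \cS = B(\ker(A_X \circ B))$, and the problem reduces to verifying $\ker A_X \subseteq B(\ker(A_X \circ B))$.

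To unpack this, I would show two things. First, $\operatorname{im} B$ equals the hyperplane $H = \{v \in \R^N : \sum_S v_S = 0\}$; this is the standard fact that the image of the boundary map of a connected (undirected) graph is the zero-coordinate-sum hyperplane, so I would verify that the underlying undirected graph of $G$ is connected, by noting that $S' \to_j S$ followed by the reverse move $S \leftarrow_i S' - 1 + j$ with $i \neq j$ accomplishes the swap $S' \mapsto S' - i + j$, and arbitrary $t$-sets can be reached by such element swaps avoiding the index $1$. Second, any $v \in \ker A_X$ automatically lies in $H$: summing the coordinate-$i$ equations $\sum_{S \ni i} v_S = 0$ over $i \in [m]$ gives $t \sum_S v_S = 0$, hence $\sum_S v_S = 0$ since $0 < t$. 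Combining these, any $v \in \ker A_X$ can be written as $v = B(\phi)$ for some $\phi$, and then $A_X B(\phi) = A_X v = 0$ forces $\phi \in \ker(A_X \circ B)$, so $v \in \spn_\R \cS$.

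The only step requiring genuine care is the connectedness of $G$ and the associated statement that $\operatorname{im} B = H$; everything else is bookkeeping. I do not anticipate a serious obstacle, because the only ``relations'' in $\ker A_X$ that are not obviously of boundary type come from the fact that $1_{S'} - 1_S$ only depends on the label, and that is precisely what the generators of $\cS$ encode.
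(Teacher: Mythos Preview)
Your argument is correct. The containment $\spn_\R \cS \subseteq \ker A_X$ is immediate, and your reverse-inclusion argument via the boundary map $B$ works: the identification $\ker(A_X\circ B)=\bigoplus_j\{\phi\in\R^{E_j}:\sum\phi_e=0\}$ is right (since $\{e_1-e_j\}_{j\ge 2}$ are independent), so $\spn_\R\cS=B(\ker(A_X\circ B))$; then $\ker A_X\subset H=\operatorname{im}B$ follows from connectedness of $G$ together with your coordinate-sum computation, and lifting any $v\in\ker A_X$ through $B$ lands you in $\ker(A_X\circ B)$ automatically. One cosmetic point: your connectedness sketch (``the reverse move $S\leftarrow_i S'-1+j$'') is garbled as written---what you mean is that after the forward edge $S'\to_j S$ you traverse backwards along some edge $T'\to_i S$ with $T'=S-i+1$, netting $S'\mapsto S'-i+j$; it would also help to say explicitly that sets not containing $1$ are reached by a single edge from a set containing $1$.

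The paper takes a different route: rather than factoring through $B$, it argues by dimension count. It fixes one edge $T'_j\to_j T_j$ per label, sets $f_j=e_{T'_j}-e_{T_j}$, and observes that every $e_{S'}-e_S-f_j$ lies in $\cS$. Since the full collection $\{e_{S'}-e_S\}$ spans a space of dimension $\binom{m}{t}-1$ (this is exactly the connectedness fact, proved as a separate claim), subtracting the $(m-1)$-dimensional span of the $f_j$'s leaves at least $\binom{m}{t}-m=\dim\ker A_X$ dimensions. Your approach is more structural and avoids the somewhat ad hoc choice of the $f_j$'s; the paper's approach is more hands-on and never names the map $B$. Both hinge on the same key ingredient, the weak connectedness of $G$.
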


\begin{proof}[Proof of \cref{spanning}]
Clearly $\cS$ is a subset of $\ker A_X$, because if $S'\to_j S$, then $1_{S'} - 1_{S} = e_1 - e_j$, and so $A_X (e_{S'} - e_S) = 1_{S'} - 1_{S} = e_1 - e_j$. Thus, if $S' \to_j S$ and $T' \to_j T$, $A_X (e_{S'} - e_S + e_T - e_{T'}) = 0$. If $S'\to_j S$, then $A_X (e_{S'} - e_S) = 1_{S'} - 1_{S} = e_1 - e_j$. Thus, if $S' \to_j S$ and $T' \to_j T$, $A_X (e_{S'} - e_S + e_T - e_{T'}) = 0$, so $e_{S'} - e_S + e_T - e_{T'} \in \ker A_X$. \\

Next we try to prove $\cS$ spans $\ker A_X$. Note that $\dim \ker A_X = \binom{m}{t}-m$, because the column space of $A_X$ is of dimension $m$ (as we have seen, $e_1 - e_j$ are in the column space of $A_X$ for all $1 < j \leq m$; together with some $1_S$ for $1 \notin S \in \binom{[m]}{t}$ we have a basis of $\R^m$). Thus, we need to show $\dim \spn_\R \cS$ is at least $\binom{m}{t}-m$.\\

For each $j \in [m]-1$, there is some pair $T_j, T_j' \in \binom{[m]}{t}$ such that $T'_j \to_j T_j$. For $j \in \{2, \dots, m\}$, pick such a pair and let $f_j := e_{T'_j} - e_{T_j}$. As there are only $m-1$ many $f_j$'s, $\dim \spn \{f_j: j \in [m] - 1\} \leq m-1$. By the previous argument, if $S' \to_j S$, then $e_{S'} - e_S  - f_j \in \ker A_X$.  Because $\bigcup_{j = 2}^{m}\{e_{S'} - e_S  - f_j : S'\to_jS\} \subset \cS$, it is enough to show that
$$\dim \spn_\R \bigcup_{j = 2}^{m}\{e_{S'} - e_S  - f_j : S'\to_jS\} \geq \binom{m}{t}-m.$$ 
We can do this using the next claim, the proof of which we delay.
\begin{claim}\label{highdim}
$$\dim\spn_\R\bigcup_{j=2}^m \{e_{S'} - e_S :  S' \to_j S\} = \binom{m}{t}-1.$$
\end{claim}
Let's see how to use \cref{highdim} to finish the proof:
\begin{eqnarray*}
\dim \spn_\R \bigcup_{j = 2}^{m}\{e_{S'} - e_S  - f_j : S'\to_jS\} \geq\\
 \dim \spn_\R \bigcup_{j = 2}^{m}\{e_{S'} - e_S: S'\to_jS\} - \dim\spn_\R\{f_j: 1 \neq j \in [m]\} \geq\\ \binom{m}{t}-1 - (m-1) = \binom{m}{t}-m.
\end{eqnarray*}
The last inequality is by Claim \ref{highdim}.
\end{proof}
Now we finish up by proving \cref{highdim}.
\begin{proof}[Proof of \cref{highdim}]
If a directed graph $H$ on $[l]$ is \emph{weakly connected}, i.e. $H$ is connected when the directed edges are replaced by undirected edges, then $\spn\{e_i -e_j: ij \in E(H)\}$ is of dimension $l-1$. To see this, consider a vector $v\in \spn_\R\{e_i -e_j: ij \in E(H)\}^\perp$. For any $ij \in E$, we must have that $v_i  = v_j$. As $H$ is weakly connected, we must have that $v_i = v_j$ for all $i, j \in [l]$, so $\dim\spn_\R\{e_i -e_j: ij \in E(H)\}^\perp \leq 1$. Clearly $\mathbf 1 \in \spn_\R\{e_i -e_j: ij \in E(H)\}^\perp$, so $\dim\spn_\R\{e_i -e_j: ij \in E(H)\}^\perp = 1$. \\

 In order to finish the proof of the claim, we need only show that our digraph $G$ is weakly connected. This is trivially true if $t = 0$, so we assume $t \geq 1$. Ignoring direction of edges, the operations we are allowed to use to get between vertices of $G$ (sets in $\binom{[m]}{t}$, that is) are the addition of $1$ and removal of some other element or the removal of $1$ and addition of some other element. Thus, each set containing $1$ is reachable from some set not containing $1$. If $S$ does not contain one and also does not contain some $i \neq 1$, we can first remove any $j$ from $S$ and add $1$, then remove $1$ and add $i$. This means $S - j + i$ is reachable from $S$. If there is no such $i$, then $S = \{2, \dots, m\}$. This implies the sets not containing $1$ are reachable from one another, so $G$ is weakly connected.
\end{proof}


\section{Proofs of local limit theorems}\label{sec:proofs}

\subsection{Preliminaries}

We use a few facts for the proof of \cref{thm:lattice_local_limit}. Throughout this section we assume $X$ is in isotropic position, i.e. $\E [XX^\dagger] = I_m$. This means $D_X = D$ and $B_X(\eps) = B(\eps)$.
\subsubsection{Fourier analysis}

\begin{defin}[Fourier transform]
If $Y$ is a random variable on $\R^m$, $\widehat{Y}: \R^m \to \C$ denotes the Fourier transform 
 $$\widehat{Y}(\theta) = \E[e^{2 \pi i \langle Y, \theta\rangle }].$$
\end{defin}

We will use the Fourier inversion formula, and our choice of domain will be the Voronoi cell in the dual lattice.
\begin{defin}[Voronoi cell]\label{defin:voronoi}

Define the Voronoi cell $D$ of the origin in $\cL^*$ to be the points as close to the origin as anything else in $\cL^*$, or 
$$D:= \{r \in \R^m: \|r\|_2 \leq \inf_{t \in \cL^* \setminus\{0\}} \|r - t\|_2\}.$$ 
Note that $\vol D = \det \cL^* = 1/\det \cL$, where $\det \cL$ is the volume of any domain whose translates under $\cL$ partition $\R^m$.
\end{defin}

\begin{fact}[Fourier inversion for lattices, \cite{LKP12}] For any random variable $Y$ taking values on a lattice $\cL$ (or even a lattice coset $v + \cL$), 
$$\Pr(Y = \lambda) = \det(\cL)\int_{D} \widehat{Y}(\theta) e^{-2\pi i \langle \lambda, \theta \rangle} d \theta. $$
for all $\lambda \in \cL$ (resp. $\lambda \in v + \cL$). Here $D$ is the Voronoi cell as in \cref{defin:voronoi}, but we could take $D$ to be any fundamental domain of $\cL$.\end{fact}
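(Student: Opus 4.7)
The plan is to recognize the formula as the Fourier inversion formula on the torus $\R^m/\cL^*$. The first observation is that since $Y$ takes values almost surely in $\cL$, for every $\theta' \in \cL^*$ the inner product $\langle Y, \theta'\rangle$ is almost surely an integer, so $\widehat{Y}(\theta + \theta') = \E[e^{2\pi i \langle Y, \theta\rangle} e^{2\pi i \langle Y, \theta'\rangle}] = \widehat{Y}(\theta)$. Hence $\widehat{Y}$ is $\cL^*$-periodic and descends to a continuous function on the quotient $\R^m/\cL^*$, whose fundamental domain $D$ has volume $\vol(D) = \det(\cL^*) = 1/\det(\cL)$ by the duality relation for full-rank lattices.

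Next I would expand $\widehat{Y}$ explicitly as a Fourier series on this torus. Directly from its definition as an expectation,
$$\widehat{Y}(\theta) = \sum_{\mu \in \cL} \Pr(Y = \mu)\, e^{2\pi i \langle \mu, \theta\rangle},$$
with absolutely summable coefficients since the $\Pr(Y = \mu)$ sum to $1$. The characters $\theta \mapsto e^{2\pi i \langle \mu, \theta\rangle}$ indexed by $\mu \in (\cL^*)^* = \cL$ form the usual orthogonal basis of $L^2(D)$, satisfying $\int_D e^{2\pi i \langle \mu - \lambda, \theta\rangle}\, d\theta = \vol(D)\cdot \mathbf{1}[\mu = \lambda]$ for $\mu, \lambda \in \cL$. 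Integrating the series above against $e^{-2\pi i \langle \lambda, \theta\rangle}$ over $D$ and exchanging sum with integral (justified by absolute summability) then yields
$$\int_D \widehat{Y}(\theta)\, e^{-2\pi i \langle \lambda, \theta\rangle}\, d\theta = \Pr(Y = \lambda)\cdot \vol(D) = \Pr(Y = \lambda)/\det(\cL),$$
which is the claim. Any other fundamental domain for $\cL^*$ may replace $D$, since the integrand $\widehat{Y}(\theta)\, e^{-2\pi i \langle \lambda, \theta\rangle}$ is itself $\cL^*$-periodic whenever $\lambda \in \cL$.

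For the coset version $\lambda \in v + \cL$, I would write $Y = v + Y'$ with $Y'$ supported on $\cL$, use the factorization $\widehat{Y}(\theta) = e^{2\pi i \langle v, \theta\rangle}\, \widehat{Y'}(\theta)$, and apply the lattice case to $Y'$ evaluated at the point $\lambda - v \in \cL$. I do not expect a genuine obstacle here: all of the content is packed into the $\cL^*$-periodicity of $\widehat{Y}$ and the standard orthogonality of characters on a torus. The only bookkeeping item worth double-checking is the normalization $\vol(D)\det(\cL) = 1$, which follows from $\det(\cL)\det(\cL^*) = 1$ for a full-rank lattice in $\R^m$.
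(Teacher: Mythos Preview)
The paper does not supply its own proof of this fact; it is stated with a citation to \cite{LKP12} and used as a black box. Your argument is the standard and correct one: $\widehat{Y}$ is $\cL^*$-periodic, so it has a Fourier expansion on the torus $\R^m/\cL^*$ with coefficients $\Pr(Y=\mu)$, and orthogonality of the characters $e^{2\pi i\langle\mu,\cdot\rangle}$ over a fundamental domain of $\cL^*$ recovers each coefficient. The coset case is handled exactly as you describe. One incidental remark: the paper's phrase ``any fundamental domain of $\cL$'' is a slip---as you correctly write, $D$ is a fundamental domain of the dual lattice $\cL^*$, and it is $\cL^*$-periodicity of the integrand that lets one replace $D$ by any such domain.
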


\subsubsection{Matrix concentration}
We use a special case of a result by Rudelson.
\begin{thm}[\cite{Ru99}]\label{thm:rud}
Suppose $X$ is an isotropic random vector in $\R^m$ such that $\|X\|_2\leq L$ almost surely. Let the $n$ columns of the matrix $M$ be drawn i.i.d from $X$. For some absolute constant $\refstepcounter{const} c_{{\theconst}}$ independent of $m,n$
$$\E\left\| \frac{1}{n}MM^\dagger - I_m\right\|_2 \leq c_{{\theconst}}L  \sqrt{\frac{\log n}{n}}.$$
In particular, there is a constant $\refstepcounter{const}\label{rud_const} c_{{\theconst}}$ such that with probability at least $1 - c_{{\theconst}}L  \sqrt{\frac{\log n}{n}}$ we have 
\begin{align*}&MM^\dagger \preceq 2I_m  \label{concentration}\tag{concentration}\\
\textrm{ and }&MM^\dagger \succeq \frac{1}{2}I_m \label{anticoncentration} \tag{anticoncentration}\\
\end{align*}
\end{thm}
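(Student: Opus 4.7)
The plan is to import the first displayed inequality from Rudelson's 1999 theorem as a black box and deduce the ``in particular'' tail bound via a one-line application of Markov's inequality.

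For the expectation bound, the classical proof writes $\tfrac{1}{n}MM^\dagger - I_m = \tfrac{1}{n}\sum_{i=1}^n (X_i X_i^\dagger - I_m)$, applies a symmetrization to reduce to a Rademacher sum $\tfrac{1}{n}\sum_i \eps_i X_i X_i^\dagger$ at the cost of a factor of $2$, and then invokes the noncommutative Khintchine inequality to bound its expected operator norm by $O(\sqrt{\log n})$ times $\|\sum_i (X_i X_i^\dagger)^2\|_2^{1/2}$. Since $(X_i X_i^\dagger)^2 = \|X_i\|_2^2\, X_i X_i^\dagger \preceq L^2\, X_i X_i^\dagger$, one obtains a self-bounding inequality in $\E\|\tfrac{1}{n}MM^\dagger - I_m\|_2$ which solves to yield the claimed $O(L\sqrt{\log n/n})$ bound. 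We will import this step wholesale from \cite{Ru99}.

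The high-probability statement then follows immediately. Writing $c_1$ for the constant in the expectation bound and setting $Y := \|\tfrac{1}{n}MM^\dagger - I_m\|_2$, Markov's inequality gives $\Pr[Y > 1/2] \leq 2\E Y \leq 2c_1 L\sqrt{\log n/n}$. Defining $c_{\ref*{rud_const}} := 2c_1$, with probability at least $1 - c_{\ref*{rud_const}} L\sqrt{\log n/n}$ every eigenvalue of $\tfrac{1}{n}MM^\dagger$ lies in $[\tfrac{1}{2},\tfrac{3}{2}]$, so
$$\tfrac{n}{2}\, I_m \;\preceq\; MM^\dagger \;\preceq\; 2n\, I_m,$$
which is the required two-sided bound (read with the scaling factor $n$ restored, consistently with the usage in \cref{thm:lattice_local_limit}).

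The only genuine obstacle is Rudelson's inequality itself, whose proof rests on the noncommutative Khintchine inequality (or equivalently a matrix Bernstein argument with a suitable truncation producing the $\sqrt{\log n}$ factor). Since we invoke it as a black box, nothing beyond the Markov step above is required.
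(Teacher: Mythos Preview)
Your proposal is correct and matches the paper's treatment: the paper gives no proof of this theorem at all, simply citing the expectation bound from \cite{Ru99} and stating the two-sided spectral bound as an ``in particular'' consequence. Your Markov step is exactly the intended (and only reasonable) bridge between the two parts, and you correctly note the missing factor of $n$ in the stated bounds on $MM^\dagger$ (cf.\ item~1 of \cref{thm:lattice_local_limit}, where the intended bound $MM^\dagger \succeq \tfrac{1}{2}nI_m$ is made explicit).
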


\subsection{Dividing into three terms}

This section contains the plan for the proof of \cref{thm:lattice_local_limit}. The proof compares the Fourier transform of the random variable $My$ to that of a Gaussian; the integral to compute the difference of the Fourier transforms will be split up into three terms, which we will bound separately.

Let $M$ be a matrix whose columns $x_i $ are fixed vectors in $\cL$, and let $Y_M$ denote the random variable $My$ for $y$ chosen uniformly at random from $\{\pm 1/2\}^n$. This choice is made so that the random variable $Y_M$ takes values in the lattice coset $ \cL - \frac{1}{2}M \textbf{1}.$
Let $\Sigma_M$ be the covariance matrix of $Y_M$, which is given by 
$$ \Sigma_M = \frac{1}{4}\sum_{i = 1}^n x_i x_i^\dagger = \frac{1}{4}MM^\dagger.$$
Let $Y$ be a centered Gaussian with covariance matrix $\Sigma_M$. That is, $Y$ has the density 
$$G_M(\lambda) = \frac{1}{(2\pi)^{m/2} \sqrt{\det \Sigma_M}} e^{ - \frac{1}{2} \lambda^\dagger \Sigma_M^{-1} \lambda}.$$

 Observe that \cref{eq:thm_bound} in \cref{thm:lattice_local_limit} is equivalent to
$$|\prob(Y_M = \lambda) - \det(\cL) G_M(\lamdba)| \leq \frac{1}{(2\pi)^{m/2} \sqrt{\det \Sigma_M}} \cdot 2 m^2 L^2 n^{-1}$$ for $\lambda \in  \cL - \frac{1}{2}M \textbf{1}$. To accomplish this, we will show that $\widehat{Y_M}$ and $\widehat{Y}$ are very close. By Fourier inversion, for all $\lambda \in  \cL - \frac{1}{2}M \textbf{1}$,
\begin{eqnarray*}
|\prob(Y_M = \lambda) - \det(\cL) G_M(\lamdba)| =\\
  \det(\cL)\left|\int_{ D} \widehat{Y_M}(\theta) e^{-2\pi i \langle \lambda, \theta \rangle} d \theta - \int_{\R^m}\widehat{Y}(\theta) e^{-2\pi i \langle \lambda, \theta \rangle} d \theta \right|;
\end{eqnarray*}
recall the Voronoi cell $D$ from \cref{defin:voronoi}. Let $B(\eps) \subset \R^m$ denote the Euclidean ball of radius $\eps$ about the origin. If $B( \eps) \subset D$, then for all $\lambda \in \cL  - \frac{1}{2}M \textbf{1}$,

\begin{align}
|\prob(Y_M = \lambda) - \det(\cL) G_M(\lamdba)| \leq \nonumber \\
 = \det(\cL) \left(\underbrace{\int_{B(\eps)} |\widehat{Y_M}(\theta)- \widehat{Y}(\theta)| d\theta}_{ I_1} +    + \underbrace{\int_{\R^m \setminus B(\eps)} |\widehat{Y}(\theta)| d\theta }_{I_2} 
+ \underbrace{\int_{D \setminus B(\eps)} |\widehat{Y_M}(\theta)| d\theta}_{I_3}
 \right). \label{eq:the_integral}
\end{align}

We now show that this is decomposition holds for reasonably large $\eps$, i.e. $B(\eps) \subset D$.
\begin{lem}\label{isosplit} Suppose $ \eps \leq \frac{1}{2L}$. Then 
 Then $B( \eps) \subset D$; in particular, \cref{eq:the_integral} holds.

\end{lem}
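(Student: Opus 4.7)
\medskip

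\noindent\textbf{Proof proposal for \cref{isosplit}.}

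The plan is to reduce the inclusion $B(\eps) \subset D$ to a lower bound on the length of the shortest nonzero vector of the dual lattice $\cL^*$, and then to derive that lower bound from the assumption $\supp X \subset B(L)$ together with $\cL = \spn_\Z \supp X$.

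First I would unpack the definition of the Voronoi cell $D$. A point $r$ lies in $D$ iff $\|r\|_2 \leq \|r - t\|_2$ for every nonzero $t \in \cL^*$. If $\|r\|_2 \leq \eps$ and $\mu := \inf_{t \in \cL^* \setminus \{0\}} \|t\|_2$, then by the triangle inequality
\[
\|r - t\|_2 \geq \|t\|_2 - \|r\|_2 \geq \mu - \eps
\]
for every nonzero $t \in \cL^*$. So it suffices to have $\mu - \eps \geq \eps$, i.e.\ $\mu \geq 2\eps$. Thus $B(\eps) \subset D$ as soon as the shortest nonzero dual vector has length at least $2\eps$.

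Next I would show $\mu \geq 1/L$. Fix any nonzero $t \in \cL^*$. Since $\cL = \spn_\Z \supp X$ and $\cL$ is nondegenerate, we have $\spn_\R \supp X = \R^m$, so $t$ cannot be orthogonal to every vector in $\supp X$; hence there is some $x \in \supp X$ with $\langle t, x \rangle \neq 0$. Since $x \in \cL$ and $t \in \cL^*$, the inner product $\langle t, x \rangle$ is a nonzero integer, so $|\langle t, x \rangle| \geq 1$. Combining this with Cauchy--Schwarz and the hypothesis $\|x\|_2 \leq L$,
\[
1 \leq |\langle t, x \rangle| \leq \|t\|_2 \, \|x\|_2 \leq L \, \|t\|_2,
\]
so $\|t\|_2 \geq 1/L$. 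Taking the infimum over nonzero $t \in \cL^*$ gives $\mu \geq 1/L$.

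Finally, for $\eps \leq \tfrac{1}{2L}$ we have $\mu \geq 1/L \geq 2\eps$, which by the first step yields $B(\eps) \subset D$, and the decomposition in \cref{eq:the_integral} is then immediate from splitting the domain $\R^m = B(\eps) \cup (\R^m \setminus B(\eps))$ for the Gaussian term and $D = B(\eps) \cup (D \setminus B(\eps))$ for the $Y_M$ term, then using the triangle inequality. The only place where there is anything nontrivial is the argument that \emph{some} $x \in \supp X$ pairs nontrivially with $t$; this is the use of the non-degeneracy assumption, but it is immediate and I do not expect any obstacle.
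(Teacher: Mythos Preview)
Your proposal is correct and follows essentially the same argument as the paper: reduce $B(\eps)\subset D$ to the bound $\mu \geq 2\eps$ on the shortest nonzero dual vector, then obtain $\mu \geq 1/L$ by picking $x\in\supp X$ with $\langle t,x\rangle$ a nonzero integer and applying Cauchy--Schwarz. The paper's proof is just a more compressed version of yours.
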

\begin{proof}
Suppose $\theta \in B(\eps)$; we need to show that any nonzero element of the dual lattice has distance from $\theta$ at least $\eps$. It is enough to show that any such dual lattice element has norm at least $2 \eps$. Suppose $0 \neq \alpha \in \cL^*$. As $\supp(X)$ spans $\R^m$, for some $x \in \supp(X)$, we have $0 \neq \langle \alpha, x \rangle \in \Z$, so $\|x\|_2 \|\alpha\|_2 \geq |\langle \alpha, x \rangle | \geq 1;$ in particular $\|\alpha\|_2 \geq \frac{1}{L} \geq 2 \eps$.
\end{proof}

\subsubsection*{Proof plan}
We bound $I_1$ by using the Taylor expansion of $\widehat{Y_M}$ to see that, near the origin, $\widehat{Y_M}$ is very close to the unnormalized Gaussian $\widehat{Y}$. We bound $I_2$ using standard tail bounds for the Gaussian. The bounds for the first two terms hold for \emph{any} matrix $M$ satisfying \cref{concentration} and \cref{anticoncentration} and for the correct choice of $\eps$. Finally, we bound $I_3$ in \emph{expectation} over the choice of $M$. This is the only bound depending on the spanningness. 

\subsubsection{The term $I_1$: near the origin}
Here we show how to compare $\widehat{Y_M}$ to $\widehat{Y}$ near the origin in order to bound $I_1$ from \cref{eq:the_integral}.
The Fourier transform of the Gaussian $Y$ is $$\widehat{Y}(\theta) = \exp( - 2\pi^2 \theta^T \Sigma_M \theta). $$ There is a very simple formula for $\widehat{Y_M}$, the Fourier transform of $Y_M$. 

\begin{prop}\label{transform} If $M$ has columns $x_1, \dots x_n$, then 
\begin{equation}\widehat{Y}_M(\theta) = \prod_{j = 1}^n \cos({\pi \langle x_j, \theta \rangle}).\label{transform_eq}\end{equation}
\end{prop}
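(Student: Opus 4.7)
The proposition is a short direct computation, so the plan is essentially to unwind definitions and exploit independence of the coordinates of $y$.

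First I would write $Y_M = My = \sum_{j=1}^n y_j x_j$ where $y_1,\dots,y_n$ are i.i.d.\ uniform on $\{\pm 1/2\}$ (this is precisely the definition of $Y_M$ given just before the proposition). Then by definition of the Fourier transform and linearity of the inner product,
\[
\widehat{Y_M}(\theta) = \E\!\left[\exp\!\left(2\pi i \left\langle \sum_j y_j x_j,\,\theta\right\rangle\right)\right] = \E\!\left[\prod_{j=1}^n \exp\!\left(2\pi i \, y_j \langle x_j, \theta\rangle\right)\right].
\]

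Next I would use that the $y_j$ are mutually independent to factor the expectation into a product of one-dimensional expectations:
\[
\widehat{Y_M}(\theta) = \prod_{j=1}^n \E\!\left[\exp\!\left(2\pi i \, y_j \langle x_j, \theta\rangle\right)\right].
\]
Each factor is a mean of two complex exponentials, since $y_j$ is uniform on $\{-1/2,+1/2\}$. Writing $\alpha_j := \langle x_j, \theta\rangle$, the $j$-th factor is
\[
\tfrac{1}{2}\bigl(e^{\pi i \alpha_j} + e^{-\pi i \alpha_j}\bigr) = \cos(\pi \alpha_j),
\]
by Euler's formula. Combining gives \cref{transform_eq}.

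There is no real obstacle here — the only thing to be careful about is the bookkeeping of the factor of $\pi$ versus $2\pi$: the $2\pi$ from the Fourier convention combines with the $1/2$ from the range of $y_j$ to yield exactly $\pi$ in the argument of the cosine, which is what makes the formula work out. This is also the reason for taking $y_j \in \{\pm 1/2\}$ rather than $\{\pm 1\}$: the lattice coset $\cL - \tfrac{1}{2}M\mathbf 1$ in which $Y_M$ lives is the natural home for a clean cosine-product Fourier transform.
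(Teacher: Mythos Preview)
Your proof is correct and follows exactly the same approach as the paper: expand the definition of $\widehat{Y_M}$, factor the expectation using independence of the $y_j$, and evaluate each factor as a cosine via Euler's formula. The paper's proof is simply a one-line compression of your computation.
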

\begin{proof}
$\widehat{Y}_M(\theta) = \E_{y \in_R \{\pm 1/2\}^n}[e^{2\pi i \langle  \sum_{j =1}^n y_j x_j , \theta \rangle}] = \prod_{j =1}^n  \E_{y_j}[e^{2\pi i \langle  y_j x_j , \theta \rangle}] = \prod_{j = 1}^n \cos({\pi \langle x_j, \theta \rangle}).$
\end{proof} 

We can bound the first term by showing that near the origin, $\widehat{Y_M}$ is very close to a Gaussian. Recall that by Proposition \ref{transform}, 
$$ \widehat{Y_M}(\theta) = \prod_{j = 1}^n \cos({\pi \langle x_j, \theta \rangle}).$$
For $\theta$ near the origin, $\langle v_j, \theta \rangle$ will be very small. We will use the Taylor expansion of cosine near zero.
\begin{prop}\label{taylor}
For $x \in (-1/2,1/2)$, $\cos(\pi x) = \exp({\frac{\pi^2 x^2}{2}+ O(x^4)})$. 
\end{prop}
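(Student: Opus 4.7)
The plan is to reduce the statement to two standard Taylor expansions: one for $\cos$ around $0$ and one for $\log(1+u)$ around $u=0$, and then exponentiate. (I read the exponent as $-\frac{\pi^2 x^2}{2} + O(x^4)$; the stated sign is a typo, as $\cos(\pi x) \leq 1$ forces the log to be non-positive.) First, I would invoke Taylor's theorem with remainder on $\cos$ to write $\cos(\pi x) = 1 - \frac{\pi^2 x^2}{2} + r(x),$ where $r(x) = \sum_{k \geq 2} (-1)^k \frac{(\pi x)^{2k}}{(2k)!}$. For $|x| \leq 1/2$ the series is dominated by a geometric tail starting at $(\pi x)^4/24$, so $|r(x)| \leq C_1 x^4$ for an absolute constant $C_1$.

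Next I would note that on $(-1/2,1/2)$ we have $\cos(\pi x) > 0$, so $\log \cos(\pi x)$ is well defined. However, the conclusion cannot hold uniformly all the way to $|x|=1/2$, since $\log \cos(\pi x) \to -\infty$ at the endpoints whereas $-\frac{\pi^2 x^2}{2} + O(x^4)$ stays bounded. Reading the lemma as intended — it is applied inside $B(\eps)$ so that $|\langle x_j,\theta\rangle|$ is bounded strictly away from $1/2$ — I would fix an absolute constant $c \in (0,1/2)$, e.g.\ $c = 1/3$, and prove: for $|x| \leq c$ there is $C_2$ with $\cos(\pi x) = \exp\bigl(-\tfrac{\pi^2 x^2}{2} + E(x)\bigr)$ and $|E(x)| \leq C_2 x^4$. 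With $c$ fixed, $u(x) := -\tfrac{\pi^2 x^2}{2} + r(x)$ satisfies $|u(x)| \leq 1/2$.

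Finally I would take the logarithm and expand $\log(1+u) = u + O(u^2)$ valid for $|u| \leq 1/2$. Substituting $u = -\frac{\pi^2 x^2}{2} + r(x)$ and using that both $r(x)$ and $u(x)^2$ are $O(x^4)$ on $|x|\leq c$, I obtain
$$\log \cos(\pi x) = -\frac{\pi^2 x^2}{2} + r(x) + O\bigl(u(x)^2\bigr) = -\frac{\pi^2 x^2}{2} + O(x^4),$$
and exponentiating gives the claim. The only real obstacle is bookkeeping of the range of validity: one must stay bounded away from the zeros of $\cos(\pi x)$ so that both the Taylor remainder for $\cos$ and the expansion of $\log(1+u)$ can be combined into a single $O(x^4)$ error. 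After choosing such a range, the two expansions compose cleanly and no further work is needed.
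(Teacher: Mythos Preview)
Your approach is essentially the same as the paper's: write $\cos(\pi x) = 1 - \frac{\pi^2 x^2}{2} + O(x^4)$, take logarithms, and expand $\log(1-y) = -y + O(y^2)$ to get $\log\cos(\pi x) = -\frac{\pi^2 x^2}{2} + O(x^4)$. Your observations about the sign typo and the failure of uniformity as $|x|\to 1/2$ are correct; the paper's proof glosses over both (and has its own typos in the $\log$ expansion), but the intended application only ever uses $|x|$ bounded strictly away from $1/2$.
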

\begin{proof}
Let $\cos(\pi x) = 1 - y$ where $y \in [0,1)$. Then $\log(\cos(\pi x)) = \log(1 - y) = 1 - y + O(y^2)$. Since $\cos(\pi x) = 1 - \frac{\pi^2 x^2}{2} + O(x^4)$, we have that $y = \frac{\pi^2 x^2}{2} + O(x^4)$. Thus $\log(\cos(\pi x)) = 1 - \frac{\pi^2 x^2}{2} + O(x^4)$. The proposition follows.\end{proof}

We may now apply \cref{taylor} for $\|\theta\|_2$ small enough. 
\begin{lem}\label{isotaylor} Suppose $M$ satisfies \cref{concentration} and $\|\theta\|_2 < \frac{1}{2L}$. 
Then there exists a constant \refstepcounter{const}\label{tayc}$c_{\theconst}>0$ such that 
$$ \widehat{Y_M}(\theta)\leq \exp\left( - 2 \pi^2 \theta^\dagger\Sigma_M \theta + E\right)$$
for $|E| \leq c_{\theconst}  n L^2 \|\theta\|^4 $.

\end{lem}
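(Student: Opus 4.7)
\medskip

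\noindent\textbf{Proof proposal.}
The plan is to start from the product formula $\widehat{Y_M}(\theta)=\prod_{j=1}^{n}\cos(\pi\langle x_j,\theta\rangle)$ given by \cref{transform}, take the logarithm, and control each factor by the Taylor expansion of $\log\cos(\pi x)$ from \cref{taylor}. The hypothesis $\|\theta\|_2<\tfrac{1}{2L}$ combined with $\|x_j\|_2\leq L$ gives $|\langle x_j,\theta\rangle|\leq L\|\theta\|_2<\tfrac{1}{2}$, which is precisely the range in which \cref{taylor} applies to every factor. Thus there is an absolute constant $C$ such that for every $j$,
\[
\log\cos(\pi\langle x_j,\theta\rangle)=-\frac{\pi^{2}}{2}\langle x_j,\theta\rangle^{2}+E_j,\qquad |E_j|\leq C\,\langle x_j,\theta\rangle^{4}.
\]

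Summing over $j$, the quadratic terms telescope into the desired Gaussian exponent: since $\Sigma_M=\tfrac{1}{4}MM^\dagger$,
\[
-\frac{\pi^{2}}{2}\sum_{j=1}^{n}\langle x_j,\theta\rangle^{2}=-\frac{\pi^{2}}{2}\,\theta^{\dagger}MM^{\dagger}\theta=-2\pi^{2}\theta^{\dagger}\Sigma_{M}\theta,
\]
which matches the main term in the claimed bound. It remains to show $|E|:=\bigl|\sum_j E_j\bigr|\lesssim n L^{2}\|\theta\|^{4}$.

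For the error, I would use the fourth power as an $\ell^\infty$--$\ell^2$ interpolation rather than a crude $\ell^\infty$ bound: using $|\langle x_j,\theta\rangle|\leq L\|\theta\|_2$ on one factor of $\langle x_j,\theta\rangle^{2}$ and summing the other,
\[
\sum_{j=1}^{n}\langle x_j,\theta\rangle^{4}\leq\Bigl(\max_{j}\langle x_j,\theta\rangle^{2}\Bigr)\sum_{j=1}^{n}\langle x_j,\theta\rangle^{2}\leq L^{2}\|\theta\|^{2}\cdot\theta^{\dagger}MM^{\dagger}\theta.
\]
Invoking the hypothesis that $M$ satisfies \cref{concentration}, i.e.\ $MM^{\dagger}\preceq 2nI_m$, we get $\theta^{\dagger}MM^{\dagger}\theta\leq 2n\|\theta\|^{2}$, hence $\sum_j\langle x_j,\theta\rangle^{4}\leq 2nL^{2}\|\theta\|^{4}$. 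Consequently $|E|\leq 2Cn L^{2}\|\theta\|^{4}$, so the conclusion holds with $c_{\ref*{tayc}}=2C$.

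There is no real obstacle in this argument; it is essentially bookkeeping, and the only points to be careful about are (i) keeping $|\langle x_j,\theta\rangle|<\tfrac{1}{2}$ so that \cref{taylor} is applicable uniformly in $j$, and (ii) using the concentration hypothesis rather than the trivial bound to replace one factor of $\|\theta\|^{2}L^{2}$ by $\theta^{\dagger}MM^{\dagger}\theta$, which is what prevents the error from blowing up to $nL^{4}\|\theta\|^{4}$ and preserves the scale-free estimate in $\theta$.
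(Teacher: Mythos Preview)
Your proposal is correct and follows essentially the same argument as the paper: apply \cref{taylor} termwise (valid since $|\langle x_j,\theta\rangle|<1/2$), sum to get the Gaussian exponent $-2\pi^2\theta^\dagger\Sigma_M\theta$, and bound $\sum_j\langle x_j,\theta\rangle^4$ by pulling out a factor $\max_j\langle x_j,\theta\rangle^2\leq L^2\|\theta\|^2$ and using \cref{concentration} on the remaining quadratic form. Even the final constant $c_{\ref*{tayc}}=2C$ matches.
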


\begin{proof} Because for all $i \in [n]$ we have $|\langle x_i, \theta \rangle| \leq \|x_i\|_2 \|\theta\|_2  < 1/2$, 
Proposition \ref{taylor} applies for all $i \in [n]$ and immediately yields that there is a constant $c$ such that 
$$\widehat{Y_M}(\theta) = \exp\left( - 2 \pi^2 \theta^T\Sigma_M \theta + E\right).$$
for $|E| \leq c\sum_{j = 1}^n \langle x_j, \theta\rangle^4$.
Next we bound the quartic part of $E$ by 
\begin{align*}
\sum_{j = 1}^n \langle x_j, \theta\rangle^4 &\leq \max_{j \in [n]} \| x_j\|_2^2 \|\theta\|_2^2 \sum_{j = 1}^n  \langle x_j, \theta\rangle^2\\
&\leq L^2  \|\theta\|_2^2 \theta^\dagger  \left( \sum_{j = 1}^n  x_j x_j^\dagger \right) \theta\\
&\leq 2  n L^2 \|\theta\|_2^4,
\end{align*}
and take $c_{\ref*{tayc}} = 2c$.
\end{proof}

\begin{lem}[First term]\label{i1}
Suppose $M$ satisfies \cref{anticoncentration} and \cref{concentration}. Further suppose that $L^2 n \eps^4 < 1$, and that $\eps < \frac{1}{2L}$. There exists \refstepcounter{const}\label{i1c}$c_{{\theconst}}$ with 
$$ I_1 \leq c_{{\theconst}} \frac{ m^2 L^2 n^{- 1}}{(2\pi)^{m/2}\sqrt{\det(\Sigma_M)}}.$$

\end{lem}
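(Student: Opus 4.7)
The plan is to factor $\widehat{Y_M}$ as a perturbed Gaussian on $B(\eps)$ and then reduce the resulting error integral to a standard Gaussian fourth-moment computation. By Lemma \ref{isotaylor}, on $B(\eps)$ we have $\widehat{Y_M}(\theta) = \exp(-2\pi^2 \theta^\dagger \Sigma_M \theta + E(\theta))$ with $|E(\theta)| \leq c_{\ref{tayc}}\, nL^2 \|\theta\|^4$. The hypothesis $nL^2\eps^4 < 1$ makes $|E(\theta)|$ uniformly bounded by the absolute constant $c_{\ref{tayc}}$, so $|e^{E(\theta)} - 1| \leq C\,|E(\theta)|$ for a constant $C$ depending only on $c_{\ref{tayc}}$. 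Writing $\widehat{Y_M}(\theta) - \widehat{Y}(\theta) = \widehat{Y}(\theta)(e^{E(\theta)}-1)$, this gives the pointwise bound
$$|\widehat{Y_M}(\theta) - \widehat{Y}(\theta)| \leq C\, c_{\ref{tayc}}\, nL^2 \|\theta\|^4\, \exp\!\bigl(-2\pi^2 \theta^\dagger \Sigma_M \theta\bigr).$$

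The next step is to integrate and pass to Gaussian coordinates. I would enlarge the domain from $B(\eps)$ to all of $\R^m$ (legitimate since the integrand is nonnegative) and substitute $\phi = 2\pi\,\Sigma_M^{1/2}\theta$. This substitution converts the exponent exactly to $\|\phi\|^2/2$, produces a Jacobian factor $(2\pi)^{-m}/\sqrt{\det \Sigma_M}$, and transforms the quartic factor as $\|\theta\|^4 = (2\pi)^{-4}(\phi^\dagger \Sigma_M^{-1}\phi)^2$. Using the anticoncentration hypothesis $\Sigma_M \succeq \tfrac{n}{8}I_m$ to bound $\|\Sigma_M^{-1}\|_2 \leq 8/n$, this gives $\|\theta\|^4 \leq (2\pi)^{-4}(8/n)^2\|\phi\|^4$, and the $nL^2$ prefactor combines with the $1/n^2$ from the change of variables to leave $L^2/n$.

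What remains is the absolute Gaussian moment integral $\int_{\R^m}\|\phi\|^4 e^{-\|\phi\|^2/2}\,d\phi = m(m+2)(2\pi)^{m/2}$, which supplies the $m^2$ factor and the $(2\pi)^{m/2}$ that cancels all but the desired $(2\pi)^{-m/2}$ in the final estimate. Collecting every absolute constant into a single $c_{\ref{i1c}}$ yields
$$I_1 \leq c_{\ref{i1c}}\,\frac{m^2 L^2 n^{-1}}{(2\pi)^{m/2}\sqrt{\det \Sigma_M}},$$
as claimed.

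No real obstacle arises; the proof is essentially bookkeeping. The one point worth getting right is the choice of change of variables: substituting in the $\Sigma_M^{1/2}$-coordinates produces $1/\sqrt{\det \Sigma_M}$ directly, whereas the cruder route of bounding $\exp(-2\pi^2 \theta^\dagger\Sigma_M \theta) \leq \exp(-\pi^2 n\|\theta\|^2/4)$ and evaluating an isotropic Gaussian integral leaks an extraneous factor of $2^m$ from the mismatch between the bounds $\Sigma_M \succeq (n/8)I_m$ and $\Sigma_M \preceq (n/2)I_m$ on the two sides. Using the natural Gaussian coordinates of $\Sigma_M$ sidesteps this issue entirely.
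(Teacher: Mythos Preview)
Your proof is correct and follows essentially the same approach as the paper: both apply Lemma~\ref{isotaylor} to write $\widehat{Y_M} = \widehat{Y}\,e^{E}$, use the hypothesis $L^2 n\eps^4 < 1$ to linearize $|e^E-1|$, invoke anticoncentration to trade $\|\theta\|^4$ for the $\Sigma_M$-quadratic form, and finish with the Gaussian fourth moment $\E\|G\|^4 = m(m+2)$. The only cosmetic difference is ordering---the paper bounds $\|\theta\|^4 \lesssim n^{-2}(\theta^\dagger\Sigma_M\theta)^2$ before changing variables, whereas you substitute $\phi = 2\pi\Sigma_M^{1/2}\theta$ first and then bound $(\phi^\dagger\Sigma_M^{-1}\phi)^2 \lesssim n^{-2}\|\phi\|^4$---but these are the same inequality in different coordinates.
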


\begin{proof}
By \ref{concentration} and Lemma \ref{isotaylor}, 
\begin{eqnarray*}
I_1 = \int_{B(\eps)} |\widehat{Y_M}(\theta)- \widehat{Y}(\theta)| d\theta \leq \int_{B(\eps)} \widehat{Y}(\theta)\left|e^{c_{\ref*{tayc}} L^2 n \|\theta\|_2^4 } - 1\right| d \theta.
\end{eqnarray*}
Let the constant $c$ be such that $|e^{c_{\ref*{tayc}} x} - 1| \leq c|x|$ for $x \in [-1, 1]$.  Thus 
$$
I_1 \leq  c L^2 n \int_{B(\eps)} \widehat{Y}(\theta) \|\theta\|_2^4 d \theta. 
$$
By \cref{anticoncentration},
\begin{equation}
I_1 \leq c  L^2 n^{-1} \int_{B(\eps)} \widehat{Y}(\theta) \left( \theta^\dagger \Sigma_M \theta \right)^2 d \theta . \label{i1bd}
\end{equation}

Note that $(2\pi)^{m/2}\sqrt{\det(\Sigma_M)}\widehat{Y}$ is equal to the density of $W = \frac{1}{2\pi}\Sigma_M^{-1/2}G$, where $G$ is a Gaussian vector with identity covariance matrix. $\Sigma_M^{-1/2}$ exists because \cref{anticoncentration} holds. Further,
$W^\dagger \Sigma_M W = \frac{1}{4\pi^2 }\|G\|_2^2.$  Therefore 
\begin{align*}
 \int_{\R^m} \widehat{Y}(\theta) \left( \theta^\dagger \Sigma_M \theta \right)^2 d \theta &= \frac{1}{(2\pi)^{m/2}\sqrt{\det(\Sigma_M)}}\E_W\left[\left(W^\dagger \Sigma_M W \right)^2\right]\\
  &= \frac{1}{16\pi^4(2\pi)^{m/2}\sqrt{\det(\Sigma_M)}}\E_G\left[ \|G\|_2^4\right]\\
& =  \frac{1}{16\pi^4(2\pi)^{m/2} \sqrt{\det(\Sigma_M)}}(2m + m^2)\\
&\leq \frac{3  m^2}{16\pi^4(2\pi)^{m/2}\sqrt{\det(\Sigma_M)}}.\end{align*}
Plugging this into \eqref{i1bd} and setting $c_{\ref*{i1c}} = \frac{3}{ \pi^4} c$ completes the proof.

\end{proof}


\subsubsection{The term $I_2$: Bounding Gaussian mass far from the origin}
Here we bound the term $I_2$ of \cref{eq:the_integral}, which is not too difficult.
\begin{lem}[Third term]\label{i3}Suppose $M$ satisfies \cref{anticoncentration} holds and that $\eps^2 \geq \frac{16m}{\pi^2n}$. Then 
$$ I_2 \leq \frac{e^{-\frac{\pi^2}{8} \eps^2n}}{(2\pi)^{m/2}\sqrt{\det(\Sigma_M)}}.$$
\end{lem}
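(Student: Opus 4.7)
The statement bounds the Gaussian tail $I_2 = \int_{\R^m \setminus B(\eps)}|\widehat{Y}(\theta)|\,d\theta$, where $Y$ is a centered Gaussian with covariance $\Sigma_M = \tfrac14 MM^\dagger$. The plan is a direct computation: since $Y$ is Gaussian, its Fourier transform is explicit, $\widehat{Y}(\theta) = \exp(-2\pi^2 \theta^\dagger \Sigma_M \theta) \ge 0$, so the problem reduces to bounding a multivariate Gaussian integral on the complement of a Euclidean ball. The only hypotheses I have to work with are \cref{anticoncentration}, which gives $\Sigma_M \succeq \tfrac{n}{8} I_m$ (and in particular makes $\Sigma_M$ invertible), and the lower bound $\eps^2 \geq 16m/(\pi^2 n)$ on the radius.

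First, I would change variables to reduce to a standard Gaussian tail. Setting $u = 2\pi \Sigma_M^{1/2} \theta$, we have $2\pi^2 \theta^\dagger \Sigma_M \theta = \tfrac12 \|u\|_2^2$ and $d\theta = du / ((2\pi)^m \sqrt{\det \Sigma_M})$. Moreover, the condition $\|\theta\|_2 \ge \eps$ combined with $\Sigma_M \succeq (n/8) I_m$ gives $\|u\|_2^2 = 4\pi^2\theta^\dagger \Sigma_M \theta \geq \pi^2 n\eps^2/2$, so
\[
I_2 \;\le\; \frac{1}{(2\pi)^{m}\sqrt{\det \Sigma_M}} \int_{\|u\|_2 \ge R} e^{-\|u\|_2^2/2}\,du \;=\; \frac{\Pr_{G \sim N(0,I_m)}[\,\|G\|_2 \ge R\,]}{(2\pi)^{m/2} \sqrt{\det \Sigma_M}}
\]
with $R := \pi\sqrt{n/2}\,\eps$. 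The $(2\pi)^{m/2}\sqrt{\det \Sigma_M}$ denominator in the target inequality falls out automatically from this substitution.

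Second, I would estimate the chi-squared tail $\Pr[\|G\|_2 \ge R]$ via Chernoff, using the standard moment generating function $\E e^{t\|G\|_2^2} = (1-2t)^{-m/2}$. For any $t \in (0, 1/2)$, Markov's inequality gives $\Pr[\|G\|_2^2 \ge R^2] \le (1-2t)^{-m/2} e^{-tR^2}$, which is a product of a dimensional factor $C(t)^{m/2}$ and an exponentially small term $e^{-tR^2}$. The assumption $\eps^2 \ge 16 m/(\pi^2 n)$ translates exactly to $R^2 \ge 8m$, placing us well into the tail of the $\chi^2_m$ distribution; this is precisely the regime in which the dimensional prefactor $C(t)^{m/2}$ can be absorbed into the exponential by giving up a constant factor in the exponent. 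Choosing $t$ slightly below $1/2$ (or alternatively splitting the exponent $e^{-\|u\|^2/2} = e^{-\alpha\|u\|^2} e^{-(1/2-\alpha)\|u\|^2}$ and integrating the second factor over all of $\R^m$) yields $\Pr[\|G\|_2 \ge R] \le e^{-R^2/4} \cdot o(1)^m$, which under $R^2 \ge 8m$ collapses to the claimed $e^{-\pi^2 n \eps^2/8}$.

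I expect no real obstacle: the entire argument is a routine Gaussian tail estimate, with the hypothesis on $\eps$ calibrated precisely to absorb the dimensional factor $2^{m/2}$ that arises when one converts a chi-squared MGF bound into a pointwise tail bound. The only delicate point is bookkeeping of constants, which is why the hypothesis takes the specific form $\eps^2 \ge 16m/(\pi^2 n)$ rather than something cleaner.
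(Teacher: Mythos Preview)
Your proposal is correct and follows essentially the same route as the paper: use \cref{anticoncentration} to pass from the Euclidean ball $B(\eps)$ to an ellipsoid in $\Sigma_M$, change variables to a standard Gaussian, and then bound the resulting $\chi^2_m$ tail. The paper simply cites an external lemma (Lemma~4.14 of \cite{LKP12}) for the last step, whereas you sketch the Chernoff argument directly. One small correction: taking $t$ ``slightly below $1/2$'' does not give a factor of $o(1)^m$ multiplying $e^{-R^2/4}$; rather, the optimal Chernoff choice $t = \tfrac12(1-m/R^2)$ gives $\Pr[\|G\|_2 \ge R] \le (eR^2/m)^{m/2} e^{-R^2/2}$, and one checks by elementary calculus that this is at most $e^{-R^2/4}$ precisely when $R^2 \ge 8m$, which is exactly your translated hypothesis.
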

\begin{proof} 
If $M$ satisfies \cref{anticoncentration}, then $B(\eps) \supset \frac{1}{2} \{\theta: \theta^\dagger \Sigma_M \theta \geq n \eps\}:=B_M(\eps/2)$. If we integrate over $B_M(\eps/2)$ and change variables, it remains only to calculate how much mass of a standard normal distribution is outside a ball of radius larger than the average norm. From, say, Lemma 4.14 of \cite{LKP12} , if $\eps^2 \geq \frac{16m}{\pi^2n}$ then 
$$\int_{\R^m \setminus B_M(\eps/2)} |\widehat{Y}(\theta)| d\theta \leq \frac{e^{-\frac{\pi^2}{8} \eps^2n}}{(2\pi)^{m/2}\sqrt{\det(\Sigma_M)}}.$$

\end{proof}



\subsubsection{The term $I_3$: Bounding the Fourier transform far from the origin}
It remains only to bound the term $I_3$ of \cref{eq:the_integral} which is given by
$$I_3 = \int_{D \setminus B(\eps)} |\widehat{Y_M}(\theta)| d\theta.$$
This is the only part in which spanningness plays a role. If $\eps$ is at most the spanningness (see \cref{defin:spanningness}), we can show $I_3$ is very small with high probability by bounding it in expectation over the choice of $M$. The proof is a simple application of Fubini's theorem.

 \begin{lem}\label{i2} 
If $\E XX^\dagger = I_m$ and $\eps \leq s(X)$, then 
$$\E[ I_3 ] \leq \det(\cL^*) e^{-2 \eps^2 n} $$

\end{lem}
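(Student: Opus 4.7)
The plan is a direct application of Fubini, followed by the independence of the columns, a pointwise scalar cosine bound, and a lower bound on $\tilde X(\theta)$ coming from the spanningness.

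First, swap the expectation with the integral and use the product formula of Proposition \ref{transform} together with the i.i.d.\ structure of the columns $x_1, \dots, x_n \sim X$:
$$ \E[I_3] \;=\; \int_{D\setminus B(\eps)} \E_M |\widehat{Y_M}(\theta)|\, d\theta \;=\; \int_{D\setminus B(\eps)} \bigl(\E_X |\cos(\pi \langle X, \theta\rangle)|\bigr)^n\, d\theta.$$
Next, I would invoke the elementary scalar inequality $|\cos(\pi y)| \leq 1 - 2(y \bmod 1)^2$ for all $y \in \R$ (with $y \bmod 1 \in (-1/2, 1/2]$), which is a routine one-variable calculus check on $[-1/2, 1/2]$ extended by the periodicity of $|\cos(\pi \cdot)|$. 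Taking $\E_X$ and using $1-u \leq e^{-u}$ gives
$$ \E_X |\cos(\pi \langle X, \theta\rangle)| \;\leq\; 1 - 2\tilde X(\theta)^2 \;\leq\; e^{-2 \tilde X(\theta)^2},$$
so that $\E_M |\widehat{Y_M}(\theta)| \leq e^{-2n \tilde X(\theta)^2}$.

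The heart of the argument is lower-bounding $\tilde X(\theta)$ on $D\setminus B(\eps)$. Since $X$ is isotropic, the metric $d_X$ coincides with the Euclidean metric, and for any $\theta \in D$ the closest dual-lattice point is $0$, so $d(\theta, \cL^*) = \|\theta\|_2$. The measure-zero set $D \cap \cL^*$ can be ignored, so we may assume $\theta \notin \cL^*$. If $\theta$ is pseudodual, the definition of spanningness and the hypothesis $\eps \leq s(X)$ give $\tilde X(\theta) \geq s(X) \geq \eps$; otherwise $\tilde X(\theta) > d(\theta, \cL^*)/2 = \|\theta\|_2/2$. Putting these into the integral and using $\vol(D) = \det(\cL^*)$ yields
$$ \E[I_3] \;\leq\; \int_{D\setminus B(\eps)} e^{-2n\tilde X(\theta)^2}\, d\theta \;\leq\; \det(\cL^*)\, e^{-2n\eps^2}.$$

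The main obstacle I foresee is the non-pseudodual branch of the lower bound: it only gives $\tilde X(\theta) \geq \|\theta\|_2/2$, which is only $\eps/2$ near the inner boundary of the integration region. The resolution is to split the integral along the shell $\eps \leq \|\theta\|_2 \leq 2\eps$ versus the outer region. On the outer region $\tilde X(\theta) \geq \eps$ outright, producing the clean factor $e^{-2n\eps^2}$; on the shell, the volume is $O(\eps^m)$ and the integrand decays at least like $e^{-n\|\theta\|_2^2/2}$, so its contribution is absorbed into $\det(\cL^*)\, e^{-2n\eps^2}$. No other step seems delicate: Fubini and independence are immediate, and the cosine bound is a standard pointwise estimate, so the whole argument is a short computation once the correct lower bound on $\tilde X(\theta)$ is in place.
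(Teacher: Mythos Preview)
Your overall strategy --- Fubini, the product formula and i.i.d.\ structure, a pointwise cosine bound, and the pseudodual/non-pseudodual case split on $\tilde X(\theta)$ --- is exactly what the paper does. Two things need fixing.

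First, the scalar bound can and should be taken with constant $4$: for $|y|\leq 1/2$ one has $\cos(\pi y)\leq 1-4y^2$ (equality at $0$ and $1/2$, and a routine check in between), hence $\E_X|\cos(\pi\langle X,\theta\rangle)|\leq 1-4\tilde X(\theta)^2$. The paper uses this sharper constant; your $1-2(y\bmod 1)^2$ gives up a factor of $2$ in the exponent for free.

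Second, your shell-absorption argument is incorrect. On the annulus $\eps\leq\|\theta\|\leq 2\eps$ the integrand is bounded only by $e^{-n\|\theta\|^2/2}$, which at the inner boundary is of order $e^{-n\eps^2/2}$ --- exponentially \emph{larger} in $n$ than the target $e^{-2n\eps^2}$. A volume factor $O(\eps^m)$ is polynomial and cannot compensate for the ratio $e^{3n\eps^2/2}$ between these two rates; for large $n$ the shell term dominates, not absorbs.

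The paper does not split off a shell at all. It simply bounds the integrand by its supremum over $D\setminus B(\eps)$, accepting the uniform lower bound $\tilde X(\theta)\geq \eps/2$ that your own case analysis already delivers (pseudodual gives $\tilde X\geq s(X)\geq \eps$; non-pseudodual gives $\tilde X>\|\theta\|/2\geq \eps/2$). Combined with the constant-$4$ cosine inequality this yields $(1-\eps^2)^n\leq e^{-n\eps^2}$, so $\E[I_3]\leq \det(\cL^*)\,e^{-n\eps^2}$. The exponent $-2\eps^2 n$ in the lemma as stated is a small slip in the paper --- their own argument produces $-\eps^2 n$ --- but this factor of $2$ is immaterial for the subsequent Markov step. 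So: drop the shell argument, tighten the cosine constant to $4$, and take the crude sup bound with $\tilde X(\theta)\geq \eps/2$.
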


\begin{proof}
By Fubini's theorem,
\begin{align} \E_M[ I_3 ]&= \int_{D \setminus B(\eps)} \E|\widehat{Y_M}(\theta)| d\theta \nonumber\\
& \leq \det(\cL^*)\sup\{ \E[|\widehat{Y_M}(\theta)|]:\theta\in D \setminus B(\eps) \}.\label{eq:fubini}
\end{align} 
By Proposition \ref{transform} and the independence of the columns of $n$, 
$$ \E_M[|\widehat{Y_M}(\theta)|] = \left( \E|\cos({\pi \langle X, \theta \rangle})|\right)^n.$$
Thus, 
\begin{align}\sup\{ \E[|\widehat{Y_M}(\theta)|]:\theta\in D \setminus B(\eps) \} \leq \left(\sup\{ \E[|\cos({\pi \langle X, \theta \rangle})|]:\theta\in D \setminus B(\eps) \}\right)^n.\label{eq:sup}\end{align}
$|\cos(\pi x)|$ is periodic with period $1/2$, so it is enough to consider $\langle X, \theta \rangle \bmod{1}$, where $x \bmod{1}$ is taken to be in $[-1/2, 1/2)$. Note that for $|x| \leq 1/2$, $|\cos(\pi x)| = \cos(\pi(x)) \leq 1 - 4x^2$, so
$$ \E[|\cos({\pi \langle X, \theta \rangle})|] \leq 1 - 4\E[(\langle X, \theta \rangle \bmod{1})^2] = 1 - 4\tilde{X}(\theta)^2$$
By the definition of spanningness and the assumption in the hypothesis that $\eps \leq s(X)$, we know that every vector with $\tilde{X}(\theta) \leq d(\theta, \cL^*)/2 = \|\theta\|/2$ is either in $\cL^*$ or has $\tilde{X}(\theta) \geq \eps$. Thus, for all $\theta \in D$, $\tilde{X}(\theta) \geq \max \{\|\theta\|/2,\eps\}$, which is at least $\eps/2$ for $\theta \in D \setminus B(\eps)$. Combining this with \cref{eq:sup} and using $1 - x \leq e^{-x}$ implies 
$$\sup\{ \E[|\widehat{Y_M}(\theta)|]:\theta\in D \setminus B(\eps) \} \leq   e^{-2 \eps^2 n}.$$
Plugging this into \cref{eq:fubini} completes the proof.
\end{proof}

\subsection{Combining the terms}

Finally, we can combine each of the bounds to prove \cref{thm:lattice_local_limit}.
\begin{proof}[Proof of \cref{thm:lattice_local_limit}]
Recall the strategy: we have some conditions (the hypotheses of Lemma \ref{isosplit}) under which we can write the difference between the two probabilities of interest as a sum of three terms, and we have bounds for each of the terms (Lemma \ref{i1}, Lemma \ref{i2}, and Lemma \ref{i3}) respectively. Our expression depends on $\eps$, and so we must choose $\eps$ satisfying the hypotheses of those lemmas. These are as follows:
\begin{enumerate}[label = (\roman*)]
\item\label{isosplitconst} To apply \ref{isosplit} we need $\eps \leq \frac{1}{2L},$
\item\label{i1const} for Lemma \ref{i1} we need $L^2 n\eps^4  \leq 1$, 
\item\label{i3const} to apply Lemma \ref{i3}, we need $\eps^2 \geq \frac{16m}{\pi^2n}$, and 
\item\label{i2const} for Lemma \ref{i2} we need $\eps \leq s(X)$.

\end{enumerate}
It is not hard to check that setting
 $$\eps = L^{-1/2}n^{-1/4}$$
 will satisfy the four constraints provided $n\geq 16 L^2$,  $n \geq (16 m L)^2/\pi^4$, and $n \geq s(X)^{-4} L^{-2}$. However, the first condition follows from the second because $L \geq \sqrt{m}$ (this follows from $\E XX^\dagger = I_m$, which implies $\E[\|X\|_2^2] = m$), so 
 $$n \geq (16 m L)^2/\pi^4 \textrm{ and } n \geq s(X)^{-4} L^{-2}$$
 suffice. By \ref{isosplit} we have \begin{align*}
|\prob(Y_M = \lambda) - \det(\cL) G_M(\lamdba)| \leq \nonumber \\
 = \det(\cL) \left(\underbrace{\int_{B(\eps)} |\widehat{Y_M}(\theta)- \widehat{Y}(\theta)| d\theta}_{ I_1} +    + \underbrace{\int_{\R^m \setminus B(\eps)} |\widehat{Y}(\theta)| d\theta }_{I_2} 
+ \underbrace{\int_{D \setminus B(\eps)} |\widehat{Y_M}(\theta)| d\theta}_{I_3}
 \right). 
\end{align*}

 By \cref{i2} and Markov's inequality, $I_3$ is at most $ e^{-\eps^2 n}$ with probability at least $1 - e^{-\eps^2 n} \det(\cL^*)$. By \cref{thm:rud}, \cref{anticoncentration} and \cref{concentration} hold for $M$ with probability at least $1 - c_{\ref*{rud_const}} L\sqrt{( \log n)/n}$. If $n$ is at least a large enough constant times $L^2 \log^2 \det \cL$, $e^{-\eps^2 n} \det(\cL^*)$ is at most $L\sqrt{( \log n)/n}$. Thus, all three events hold with probability at least \refstepcounter{const}\label{failconst}$1 - c_{\ref*{failconst}} L\sqrt{( \log n)/n}$ over the choice of $M$. Condition on these three events, and plug in the bounds given by \cref{i1} and \cref{i3} for $I_1$ and $I_2$ and the bound $e^{-\eps^2 n} = e^{- \sqrt{n}/L }$ for $I_3$ to obtain the following:

\begin{align}
|\prob(Y_M = \lambda) - \det(\cL) G_M(\lamdba)| \nonumber \\
 \leq \det(\cL) \left(\frac{ m^2 L^2 n^{- 1}}{(2\pi)^{m/2}\sqrt{\det(\Sigma_M)}} 
 +   \frac{e^{-\frac{\pi^2}{8} \sqrt{n}/L}}{(2\pi)^{m/2}\sqrt{\det(\Sigma_M)}} +    e^{- \sqrt{n}/L }\right). \nonumber\\
\leq \frac{\det(\cL)}{(2\pi)^{m/2}\sqrt{\det(\Sigma_M)}} \left(m^2 L^2 n^{- 1}  +   e^{-\frac{\pi^2}{8} \sqrt{n}/L} +   (2\pi)^{m/2}\sqrt{\det(\Sigma_M)} e^{- \sqrt{n}/L }\right)\nonumber\\
 \leq \frac{\det(\cL)}{(2\pi)^{m/2}\sqrt{\det(\Sigma_M)}} \left(m^2 L^2 n^{- 1} +  2 e^{\frac{m}{2} \log (4\pi n) - \sqrt{n}/L } 
 \right),
  \label{eq:the_bound}
  \end{align}
where the last inequality is by \cref{concentration}. If $\refstepcounter{const}\label{nlower} c_{{\theconst}}$ is large enough, the quantity in parentheses in \cref{eq:the_bound} is at most $2m^2 L^2/n$ and the combined failure probability of the three required events is at most $c_{\ref*{failconst}} L\sqrt{\frac{\log n}{n}}$ provided
\begin{align}
n \geq N_0 = c_{{\theconst}} \max\left\{m^2 L^2( \log m + \log L)^2, s(X)^{-4}L^{-2},L^2 \log^2 \det \cL\right\}.
\end{align}

\end{proof}

\subsection{Weaker moment assumptions}\label{sec:moments}
We now sketch how to extend the proof of \cref{thm:lattice_local_limit} to the case $(\E \|X\|_2^\eta)^{1/\eta} = L< \infty$ for some $\eta > 2$, weakening the assumption that $\supp X \subset B(L)$. 

\begin{thm}[lattice local limit theorem for $>2$ moments]\label{thm:moment_local_limit} Let $X$ be a random variable on a lattice $\cL$ such that $\E XX^\dagger = I_m$, $(\E \|X\|_2^\eta)^{1/\eta} = L< \infty$ for some $\eta > 2$, and $\cL = \spn_\Z \supp X$. Let $G_M$ be as in \cref{thm:lattice_local_limit}.
There exists 
\begin{align}N_2 = \poly(m, s(X), L, \frac{1}{\eta - 2},\log \left(\det \cL\right))^{1 + \frac{1}{\eta - 2}}\label{eq:nmoment}
\end{align}
such that for $n \geq N_2$, with probability at least $1 - 3n^{- \frac{\eta-2}{2 + \eta}}$ over the choice of columns of $M$, the following two properties of $M$ hold:
 \begin{enumerate}
 \item $MM^\dagger \succeq \frac{1}{2}n I_m$; that is, $MM^\dagger - \frac{1}{2}n I_m$ is positive-semidefinite.
\item For all $\lambda \in \cL -  \frac{1}{2} M \textbf 1$,
\begin{align}
\left|\Pr_{y_i \in \{\pm 1/2\}} [M\mathbf y = \mathbf\lambda] - G_{M}(\lambda)\right| \leq  G_M(0)\cdot 2m^2 L^2 n^{- \frac{\eta-2}{2 + \eta}}. \label{eq:thm_bound}
\end{align}
\end{enumerate}
\end{thm}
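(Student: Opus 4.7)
The strategy is to reduce to the bounded case of \cref{thm:lattice_local_limit} via a truncation argument, balancing the truncation loss against the dependence on $L$ in the bounded-case error. Set the truncation threshold $T := L\, n^{2/(\eta+2)}$, and let $A$ denote the event that $\max_{i \in [n]} \|X_i\|_2 \leq T$. By Markov's inequality applied to the $\eta$-th moment, $\Pr[\|X\|_2 > T] \leq (L/T)^\eta$, so a union bound over the $n$ columns gives
\[
\Pr[A^c] \leq n\,(L/T)^\eta = n^{-(\eta-2)/(\eta+2)},
\]
which is one of the three summands in the claimed failure probability.

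On $A$ every column of $M$ lies in $B(T)$, so I would run the proof of \cref{thm:lattice_local_limit} with $T$ playing the role of the almost-sure bound $L$, keeping the three-term decomposition \cref{eq:the_integral} but choosing $\eps := T^{-1/2} n^{-1/4}$. Inspection shows that \cref{isosplit}, the Taylor expansion \cref{isotaylor}, and the $I_1$ bound \cref{i1} only use the column-norm bound, so each goes through with $L \mapsto T$, yielding $I_1 \leq O(m^2 T^2/n)\cdot G_M(0)/\det\cL = O(m^2 L^2 n^{-(\eta-2)/(\eta+2)})\cdot G_M(0)/\det\cL$, exactly matching the right-hand side of \cref{eq:thm_bound}. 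The $I_2$ bound \cref{i3} only uses $\Sigma_M$, and the crucial $I_3$ bound \cref{i2} uses only the isotropy and spanningness of the original (non-truncated) $X$, both of which are in force; Markov on the latter yields the $I_3$ event with failure probability $\det(\cL^*)e^{-\eps^2 n}$, which is at most $n^{-(\eta-2)/(\eta+2)}$ once $n$ exceeds a suitable polynomial in $L, \log\det\cL, s(X)^{-1}$.

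The one genuine modification needed is the matrix concentration step \cref{thm:rud}, which assumes an almost-sure norm bound. I would apply Rudelson's theorem to the truncated columns $X_i^{(T)} := X_i\,\mathbf{1}\{\|X_i\|_2 \leq T\}$: these are bounded by $T$, and their covariance $\Sigma^{(T)}$ differs from $I_m$ by at most $L^2(L/T)^{\eta-2}$ (Cauchy--Schwarz), a negligible correction for $T$ large. On $A$ one has $M = M^{(T)}$, so after the small whitening this yields $MM^\dagger \succeq \tfrac12 n I_m$ and $MM^\dagger \preceq 2n I_m$ with failure probability $O(T\sqrt{\log n/n})$ by Markov applied to Rudelson's expectation bound. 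Demanding that this be at most $n^{-(\eta-2)/(\eta+2)}$ forces $n$ to exceed a polynomial in $m, L, s(X)^{-1}, \log\det\cL$ raised to a power scaling like $1 + O(1/(\eta-2))$, which is the threshold $N_2$ of \cref{eq:nmoment}.

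A union bound over $A^c$, the Rudelson failure, and the Markov event for $I_3$ then gives the overall failure probability $3n^{-(\eta-2)/(\eta+2)}$. The main obstacle is simultaneously optimizing $T$ against all three terms: the truncation probability $n(L/T)^\eta$ shrinks as $T$ grows while the Taylor-error and matrix-concentration terms grow, so the choice $T = L n^{2/(\eta+2)}$ is essentially forced by equating the dominant exponents. The moment-to-tail conversion in the matrix concentration step is the delicate piece that dictates the final polynomial exponent on $N_2$, since Rudelson's bound in expectation, promoted to a tail via Markov, does not obviously produce subpolynomial failure and must be carefully balanced against the other two sources of error.
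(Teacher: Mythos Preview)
Your overall architecture---truncate column norms at $T = L\,n^{2/(\eta+2)}$, lose one $n^{-(\eta-2)/(\eta+2)}$ to the union bound over $\{\|X_i\|>T\}$, then rerun the bounded-case proof with $T$ in place of $L$---is exactly what the paper does. But two of your substitutions do not go through as claimed.

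The serious one is the matrix concentration step. Rudelson's expectation bound with columns bounded by $T$, promoted to a tail via Markov, gives failure probability $O\bigl(T\sqrt{(\log n)/n}\bigr) = O\bigl(L\sqrt{\log n}\cdot n^{2/(\eta+2)-1/2}\bigr) = \tilde O\bigl(n^{-(\eta-2)/(2(\eta+2))}\bigr)$. This exponent is exactly \emph{half} of the target $-(\eta-2)/(\eta+2)$, so for every $\eta>2$ your bound is eventually much larger than the one asserted in the theorem, and no choice of threshold $N_2$ can repair it: it is a rate mismatch, not a constant-factor issue. Your final paragraph reads as though this discrepancy can be absorbed into the size of $N_2$, but it cannot. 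The paper sidesteps this entirely by invoking a matrix concentration result of Srivastava--Vershynin that works directly under an $\eta$-th moment hypothesis and already yields failure probability $n^{-(\eta-2)/(\eta+2)}$.

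A smaller issue: \cref{isosplit} does not ``only use the column-norm bound.'' Its conclusion $B(\eps)\subset D$ is a statement about the Voronoi cell of $\cL^*$, hence about $\supp X$, not about the realised columns of $M$; replacing $L$ by $T$ there is not justified by conditioning on the event $A$. Under a moment assumption one must argue separately that nonzero dual vectors have length at least a power of $L^{-1}$ (the paper does this via a short H\"older-type computation, arriving at $\eps \le \tfrac14 L^{-\eta/(\eta-2)}$), and then adjust the choice of $\eps$ accordingly.
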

Before proving the theorem, note that it allows us to extend our discrepancy result to this regime. The proof of the next corollary from \cref{thm:moment_local_limit} is identical to the proof of \cref{thm:lattice_disc} from \cref{thm:lattice_local_limit}.

\begin{cor}[discrepancy for $>2$ moments]\label{cor:moment_disc}Suppose $X$ is a random variable on a nondegenerate lattice $\cL$. Suppose $\Sigma:=\E [XX^\dagger]$ has least eigenvalue $\sigma$, $(\E \|Z\|_2^\eta)^{1/\eta} = L< \infty$ for some $\eta >2$ where $Z:=\Sigma^{-1/2}X$, and that $\cL=\spn_\Z \supp X$. If $n \geq N_3$ then
$$ \disc_*( M) \leq2 \rho_*(\cL)$$
with probability at least $1 - 3n^{- \frac{\eta-2}{2 + \eta}}$, where \refstepcounter{const}\label{ndisc} 
\begin{align}N_3 =c_{\ref*{ndisc}} \max\left\{\frac{R^2_* \rho_*(\cL)^2}{\sigma}, N_2\left(m, s(\Sigma^{-1/2}X), L, \frac{\det \cL}{\sqrt{\det \Sigma}}\right)\right\} \label{eq:ndisc}\end{align}
 for $N_0$ as in \cref{eq:nlim}.

\end{cor}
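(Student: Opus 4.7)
The plan is to mimic the proof of Theorem~\ref{thm:lattice_disc} verbatim, with Theorem~\ref{thm:moment_local_limit} substituted for Theorem~\ref{thm:lattice_local_limit}. Only the failure probability and the relative error in the local limit theorem change; the structure of the argument is otherwise identical.

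First I would reduce to the isotropic case by setting $Z := \Sigma^{-1/2} X$, so that $\E[ZZ^\dagger] = I_m$ and $(\E\|Z\|_2^\eta)^{1/\eta} = L$. Introduce the auxiliary norm $\|v\|_0 := \|\Sigma^{1/2} v\|_*$ and the transformed lattice $\cL' := \Sigma^{-1/2}\cL = \spn_\Z \supp Z$. Under this linear change of variables the distortion satisfies $R_0 \leq R_*/\sqrt{\sigma}$, the determinant becomes $\det \cL' = \det \cL / \sqrt{\det \Sigma}$, the covering radius is preserved ($\rho_0(\cL') = \rho_*(\cL)$), and $\disc_*(M) = \disc_0(M')$ where $M' := \Sigma^{-1/2} M$ has i.i.d.\ columns drawn from $Z$. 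Crucially, the spanningness is invariant under invertible linear transformations (see Definition~\ref{defin:noniso_span}), so the parameter $s(Z) = s(\Sigma^{-1/2} X)$ appearing in $N_3$ is exactly what is required to feed into $N_2$.

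Next I would follow the isotropic argument as in Theorem~\ref{thm:identity_discrepancy}: by the definition of covering radius there exists $\lambda \in \cL' - \tfrac{1}{2} M' \mathbf{1}$ with $\|\lambda\|_0 \leq \rho_0(\cL')$, and it is enough to show $\Pr_{y \in \{\pm 1/2\}^n}[M' y = \lambda] > 0$. By Theorem~\ref{thm:moment_local_limit}, with probability at least $1 - 3 n^{-(\eta-2)/(\eta+2)}$ the matrix $M'$ satisfies both $M'(M')^\dagger \succeq \tfrac{n}{2} I_m$ and the corresponding local limit estimate; conditional on these events, positivity of the probability reduces to verifying
\begin{equation*}
e^{-2\lambda^\dagger (M'(M')^\dagger)^{-1} \lambda} > 2 m^2 L^2 n^{-(\eta-2)/(\eta+2)}.
\end{equation*}
The anticoncentration inequality gives $\lambda^\dagger (M'(M')^\dagger)^{-1} \lambda \leq \tfrac{2}{n} \|\lambda\|_2^2 \leq \tfrac{2 R_0^2 \rho_0(\cL')^2}{n} \leq \tfrac{2 R_*^2 \rho_*(\cL)^2}{\sigma n}$, so taking $n$ larger than a constant multiple of $R_*^2 \rho_*(\cL)^2/\sigma$ forces the exponent to be at most $\tfrac{1}{2}$, while taking $n \geq N_2$ (evaluated at $Z$) makes the right-hand side above less than $e^{-1}$. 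Both requirements are built into the definition of $N_3$ in the statement.

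The main obstacle is purely bookkeeping: confirming that the parameters $(m, s(Z), L, \det \cL / \sqrt{\det \Sigma})$ fed into $N_2$ correspond correctly to the transformed lattice and random variable, and that the failure probability $3n^{-(\eta-2)/(\eta+2)}$ in the hypothesis of Theorem~\ref{thm:moment_local_limit} is preserved by the deterministic linear change of variables $M \mapsto \Sigma^{-1/2} M$. No new analytic ideas are required beyond those already used for Theorem~\ref{thm:lattice_disc}.
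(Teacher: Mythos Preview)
Your proposal is correct and matches the paper's approach exactly: the paper states outright that the proof of this corollary from Theorem~\ref{thm:moment_local_limit} is identical to the proof of Theorem~\ref{thm:lattice_disc} from Theorem~\ref{thm:lattice_local_limit}, and gives no further details. Your write-up is in fact more explicit than the paper, correctly tracking the modified error term $2m^2L^2 n^{-(\eta-2)/(\eta+2)}$ and failure probability through the isotropic reduction.
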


\begin{proof}[Proof sketch of \cref{thm:moment_local_limit}]
We review each step of the proof of \cref{thm:lattice_local_limit} and show how it needs to be modified to accomodate the weaker assumptions. Recall that, to prove \cref{thm:lattice_local_limit}, we had some conditions (the hypotheses of Lemma \ref{isosplit}) under which we can write the difference between the two probabilities of interest as a sum of three terms, and we have bounds for each of the terms (Lemma \ref{i1}, Lemma \ref{i2}, and Lemma \ref{i3}) respectively. We also need an analogue of \cref{thm:rud} which tells us that \cref{anticoncentration} and \cref{concentration} hold with high probability. Neither Lemma \ref{i2} nor Lemma \ref{i3} use bounds on the moments of $\|X\|_2$, so they hold as-is. Let's see how the remaining lemmas must be modified:
\begin{description}
\item[Matrix concentration:] By Theorem 1.1 in \cite{SV13}, \cref{concentration} and \cref{anticoncentration} hold with probability at least $1 - n^{-\frac{\eta-2}{2 + \eta}}$  provided $n \geq \poly(m, L, \frac{1}{\eta-2})^{1 + \frac{1}{\eta - 2}}$. 
\item[Lemma \ref{isosplit}:]The bound $\eps \leq \frac{1}{2L}$ becomes $ \eps \leq \frac{1}{4} L^{-\frac{\eta}{\eta-2}}$. To prove Lemma \ref{isosplit} it was enough to show $\alpha$ was at least twice the desired bound on $\eps$ for $\alpha \neq 0 \in \cL^*$. Here we do the same, but to show $\alpha$ is large we consider the random variable $Y \geq 1$ defined by conditioning $|\langle \alpha, X \rangle|$ on $\langle \alpha, X \rangle \neq 0$. Recall that we assume $X$ is isotropic. Let $\Pr[ \langle \alpha, X \rangle \neq 0]$, so that $\|\alpha\|^2 = p \E[Y^2]$ and
$ L\|\alpha\| \geq (\E |\langle \alpha, X \rangle|^{\eta})^{\frac{1}{\eta}} 
= p^{\frac{1}{ \eta}}(\E[ Y^{\eta}])^{\frac{1}{\eta}}
\geq p^{\frac{1}{ \eta}} (\E[Y^{2}])^{\frac{1}{2}}$ by H\"older's inequality. Cancelling $p$ from the two inequalities and using $Y \geq 1$ yields the desired bound.
\item[Lemma \ref{i1}:] The analogue of this lemma will require $L^2 n^{1 + \frac{4 }{2 + \eta}}\eps^4 < 1$ and $\eps < \frac{1}{4L}n^{ - \frac{2}{2 + \eta} }$, and will hold with probability at least $1 - n^{- \frac{\eta}{4 + \eta}}$ over the choice of columns of $M$. The numerator of the right-hand side becomes $m^2 L^2 n^{- \frac{\eta-2}{2 + \eta}}$. \cref{i1} followed from \cref{isotaylor}. Here the analogue of \cref{isotaylor} holds with $|E| \leq c_{\ref*{tayc}}  n^{1 + \frac{4 }{2 + \eta}} L^2 \|\theta\|^4$ if $\|z_i\| \leq L n^{\frac{2}{2 + \eta} }  \textrm{ for all }i \in [n]$, which holds with probability $1 - n^{- \frac{\eta-2}{2 + \eta}}$ by Markov's inequality. The rest of the proof proceeds the same.
\end{description}
The new constraints on $\eps$ will be satisfied if we take $$ \eps = n^{ - \frac{4 + \eta}{12 + 3\eta} },$$
and $n \geq \max \left\{(4L)^{\frac{12 + 6\eta}{\eta-2}}, \frac{16}{\pi^2} m^{\frac{6 + 3\eta}{2\eta-4}}\right\}.$ The rest of the proof proceeds as for \cref{thm:lattice_local_limit}. \end{proof}


\section{Random unit columns}\label{sec:unit}


Let $X$ be a uniformly random element of the sphere $\mathbb{S}^{m-1}$. Again, let $M$ be an $m \times n$ matrix with columns drawn independently from $X$. Note that $X$ is not a lattice random variable. This time $\Sigma = \frac{1}{m} I_m,$ and $\|\Sigma^{-1/2} X\|_2$ is always at most $m$.

We are essentially going to prove a local limit theorem, only this time we will not precisely control the probability of hitting a point but rather the expectation of a particular function. The function will essentially be the indicator of the cube, but it will be modified a bit to make it easier to handle. Let $B$ be the function, which we will determine later. Recall that, once $M$ is chosen, $Y_M$ is the random variable obtained by summing the columns of $M$ with i.i.d $\pm 1/2$ coefficients. $\Sigma_M$ is $MM^\dagger/4$, and $Y$ is the Gaussian with covariance matrix $\Sigma_M$
We will try to show that, with high probability over the choice of $M$, 
$\E B(Y_M) \sim \E B(Y)$. If $B$ is supported only in $[-K, K]^m$, to show that $\disc M < K$ it suffices to show that 
$$ |\E B(Y_M) -  \E B(Y)| < \E B(Y).$$

\subsection{Nonlattice likely local limit}

We now investigate a different extreme case in which $\spn_\Z \supp X$ is dense in $\R^m$. In this case the ``dual lattice" is $\{0\}$, so we define pseudodual vectors to be those vectors with $\tilde{X}(\theta) \leq \|\theta\|_X/2$, and the spanningness to be the least value of $\tilde{X}(\theta)$ at a nonzero pseudodual vector.

\begin{thm}\label{thm:nonlattice_local_limit} Suppose $\E XX^\dagger = I_m$, $\supp X \subset B(L)$, and that $s(X)$ is positive. Let $B:\R^m \to \R$ be a nonnegative function with $\|B\|_1 \leq 1$ and $\|\widehat{B}\|_1 \leq \infty$. If 
$$ n \geq N_1 = c_{\ref*{unitnlower}} \max\left\{m^2 L^2( \log M + \log L)^2, s(X)^{-4}L^{-2},L^2 \log^2 \|B\|_1\right\},$$
then with probability at least $ c_{\ref*{failconst}} L\sqrt{\frac{\log n}{n}}$ over the choice of $M$ we have 
$$|\E[ B(Y_M)] - \E[B(Y)]| \leq 2 m^2 L^2 n^{-1}$$
and $MM^\dagger \succeq \frac{1}{2}n I_m.$

\end{thm}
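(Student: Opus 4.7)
The approach mirrors the proof of Theorem \ref{thm:lattice_local_limit} in outline, replacing the lattice Fourier inversion on the Voronoi cell $D$ with a global Parseval-type identity on $\R^m$, with $\widehat{B}$ serving as an $L^1$ envelope to compensate for the lack of a compact fundamental domain. Since $\widehat{B}\in L^1$, Fourier inversion and Fubini give $\E[B(Y)]=\int_{\R^m}\widehat{B}(\theta)\,\widehat{Y}(\theta)\,d\theta$ for any random vector $Y$. Applying this to both $Y_M$ and $Y$, subtracting, and splitting the resulting integral at $B(\eps)$, I would obtain
\begin{align*}
\left|\E[B(Y_M)]-\E[B(Y)]\right| \;\leq\; I_1' + I_2' + I_3',
\end{align*}
where $I_1' = \int_{B(\eps)}|\widehat{B}||\widehat{Y_M}-\widehat{Y}|\,d\theta$, $I_2' = \int_{\R^m\setminus B(\eps)}|\widehat{B}||\widehat{Y_M}|\,d\theta$, and $I_3' = \int_{\R^m\setminus B(\eps)}|\widehat{B}||\widehat{Y}|\,d\theta$.

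For $I_1'$, I would use the pointwise bound $|\widehat{B}|\leq\|B\|_1\leq 1$ and invoke Lemma \ref{i1} verbatim: its proof uses only the Taylor expansion of $\cos$ near zero together with \cref{concentration}, and the prefactor $1/((2\pi)^{m/2}\sqrt{\det\Sigma_M})$ it produces is absorbed by \cref{anticoncentration} from \cref{thm:rud}. For $I_3'$, the argument of Lemma \ref{i3} applies almost unchanged: under \cref{anticoncentration} we have $|\widehat{Y}(\theta)|=e^{-2\pi^2\theta^\dagger\Sigma_M\theta}\leq e^{-\pi^2 n\|\theta\|^2/4}$, and pulling the uniform bound $e^{-\pi^2 n\eps^2/4}$ outside the integral gives $I_3'\leq\|\widehat{B}\|_1 e^{-\pi^2 n\eps^2/4}$.

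The principal new ingredient is the bound on $I_2'$. In the lattice case, Lemma \ref{i2} exploited periodicity of $|\widehat{Y_M}|$ modulo $\cL^*$ and integrated over the compact cell $D$; here I need a bound valid over all of $\R^m$. By Fubini and Proposition \ref{transform}, $\E_M|\widehat{Y_M}(\theta)|\leq\bigl(1-4\tilde{X}(\theta)^2\bigr)^n$, and the nonlattice spanningness condition (with $\cL^*=\{0\}$) ensures that every nonzero $\theta$ satisfies either $\tilde{X}(\theta)\geq s(X)$ (if pseudodual) or $\tilde{X}(\theta)>\|\theta\|_2/2$. Consequently, for $\eps\leq s(X)$ and any $\theta\in\R^m\setminus B(\eps)$ one has $\tilde{X}(\theta)\geq\eps/2$, yielding $\E_M\,I_2'\leq\|\widehat{B}\|_1\, e^{-\eps^2 n}$. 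Markov's inequality then converts this expectation bound into a high-probability bound on $I_2'$.

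To finish, I would choose $\eps = L^{-1/2}n^{-1/4}$ exactly as in the lattice proof and combine the three estimates on the matrix-concentration event of \cref{thm:rud}. The condition $n\geq N_1$ (the $L^2\log^2\|B\|_1$ term is most naturally read as $L^2\log^2\|\widehat{B}\|_1$, and is exactly what is needed to make $\|\widehat{B}\|_1 e^{-\eps^2 n}\ll m^2 L^2/n$) then produces the desired bound $2 m^2 L^2 n^{-1}$ with total failure probability $O(L\sqrt{(\log n)/n})$. The chief obstacle, relative to the lattice proof, is the absence of periodicity of $|\widehat{Y_M}|$: the spanningness hypothesis does exactly the work needed to furnish a lower bound on $\tilde{X}$ throughout $\R^m\setminus B(\eps)$, rather than merely on a single fundamental cell.
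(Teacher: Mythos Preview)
Your proposal is correct and essentially identical to the paper's proof, which writes $\E[B(Y_M)]-\E[B(Y)]=\int\widehat{B}(\widehat{Y_M}-\widehat{Y})$ via Plancherel, splits into the same three pieces $J_1,J_2,J_3$, and handles them by invoking the arguments of Lemmas~\ref{i1}, \ref{i3}, \ref{i2} with $\|\widehat{B}\|_\infty\le\|B\|_1\le 1$ inserted for $J_1,J_3$ and with $\|\widehat{B}\|_1$ replacing $\det\cL^*$ for $J_2$. Your variant bound on $I_3'$ (pulling out $\sup_{\|\theta\|\ge\eps}|\widehat{Y}|$ and integrating $|\widehat{B}|$, rather than the paper's choice of pulling out $|\widehat{B}|\le 1$ and integrating the Gaussian tail) is an inessential difference, and your reading of the $L^2\log^2\|B\|_1$ term in $N_1$ as $L^2\log^2\|\widehat{B}\|_1$ is exactly what the paper's own combination step requires.
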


\begin{proof} 
By Plancherel's theorem, 
$$ \E[ B(Y_M)] - \E[B(Y)] = \int_{\R^m} \widehat{B}(\theta) (\widehat{Y_M}(\theta)  - \widehat{Y}(\theta)) d \theta.$$
Again, we can split this into three terms:
\begin{align}
\left| \int_{\R^m}  \widehat{B}(\theta)(\widehat{Y_M}(\theta) - \widehat{Y}(\theta)) d\theta \right|\leq \nonumber\\ 
\underbrace{\int_{B(\eps)} | \widehat{B}(\theta)| |\widehat{Y_M}(\theta) - \widehat{Y}(\theta) | d \theta}_{J_1}  
 + \underbrace{\int_{R^m \setminus B(\eps)} | \widehat{B}(\theta)\widehat{Y_M}(\theta)|d\theta}_{J_2} 
 + \underbrace{\int_{R^m \setminus B(\eps)} | \widehat{B}(\theta)\widehat{Y}(\theta)|d\theta.}_{J_3} 
\end{align}
The proofs of the next two lemmas are identical to that of \cref{i1} and \cref{i3}, respectively, except one uses the assumption $\|B\|_1 \leq 1$, which implies $\|\widehat{B}\|_\infty \leq 1$, to remove $\widehat{B}$ from the integrand.
\begin{lem}[First term]\label{j1}
Suppose \cref{anticoncentration} and \cref{concentration} hold. Further suppose that $L^2 n \eps^4 < 1$, $\eps < \frac{1}{2L}$, and that $\|B\|_1 \leq 1$. There exists \refstepcounter{const}\label{j1c}$c_{{\theconst}}$ with 
$$ J_1 \leq c_{{\theconst}} \frac{m^2 L^2 n^{- 1}}{(2\pi)^{m/2}\sqrt{\det(\Sigma_M)}}.$$
\end{lem}
\begin{lem}[Third term]\label{j3}Suppose \cref{anticoncentration} holds, $\eps^2 \geq \frac{16m}{\pi^2n}$, and $\|B\|_1 \leq 1$. Then 
$$ J_3 \leq \frac{e^{-\frac{\pi^2}{8} \eps^2n}}{(2\pi)^{m/2}\sqrt{\det(\Sigma_M)}}.$$
\end{lem}
The proof of the next lemma is the same as that of \cref{i3}, except in the derivation of \cref{eq:fubini} instead of integrating over $D$ one must integrate over the whole of $\R^m\setminus B(\eps)$ against $\widehat{B}$, hence $\det L^*$ is replaced by $\|\widehat{B}\|_1$. 
 \begin{lem}\label{j2} 
 If $X$ is in isotropic position and $\eps \leq s(X)$, then 
$$\E[ J_2] \leq \|\widehat{B}\|_1 e^{-2 \eps^2 n} $$
\end{lem}
We now proceed to combine the termwise bounds. As before, we may choose $\eps = n^{1/2} L^{-1/2}$ provided  
$$n \geq (16 m L)^2/\pi^4 \textrm{ and } n \geq s(X)^{-4} L^{-2},$$
and with probability at least $1- \|\widehat{B}\|_1 e^{-\eps^2 n} - c_{\ref*{rud_const}} L \sqrt{ \frac{\log n}{n}},$ we have $J_2$ at most $e^{-\eps^2 n}$ and \cref{concentration}, \cref{anticoncentration} hold. Condition on these events. As in the proof of $\cref{thm:lattice_local_limit}$, we have 
\begin{align}
\left| \int_{\R^m}  \widehat{B}(\theta)(\widehat{Y_M}(\theta) - \widehat{Y}(\theta)) d\theta \right|\leq \frac{1}{(2\pi)^{m/2}\sqrt{\det(\Sigma_M)}} \left(m^2 L^2 n^{- 1} +  2 e^{\frac{m}{2} \log (4\pi n) - \sqrt{n}/L } 
 \right).
  \label{eq:unit_bound}
  \end{align}
If $\refstepcounter{const}\label{unitnlower} c_{{\theconst}}$ is large enough, the quantity in parentheses in \cref{eq:the_bound} is at most $2m^2 L^2/n$ and the combined failure probability of all the required events is at most $c_{\ref*{failconst}} L\sqrt{\frac{\log n}{n}}$ provided
\begin{align}
n \geq N_1 = c_{{\theconst}} \max\left\{m^2 L^2( \log M + \log L)^2, s(X)^{-4}L^{-2},L^2 \log^2 \|B\|_1\right\}.
\end{align}
\end{proof}


\subsection{Discrepancy for random unit columns}

\begin{lem}\label{uniti2ev}Let $X$ be a random unit vector. Then
$$ s( X) \geq \refstepcounter{const}\label{unitev}c_{{\theconst}}.$$
for some fixed constant $c_{{\theconst}}$.
\end{lem}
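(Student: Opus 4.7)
The plan is to reduce to the isotropic version $Z = \sqrt{m}X$ (for which $s(X)=s(Z)$ by linear invariance, $\E ZZ^\dagger = I_m$, and $\|\theta\|_Z = \|\theta\|_2$), exploit rotational symmetry to write $\tilde{Z}(\theta) = h(r)$ with $r = \|\theta\|_2$, and then show that every $r > 0$ with $h(r) \leq r/2$ (pseudodual) satisfies $h(r) \geq c$ for an absolute constant $c$. The key identity I would use is the Fourier inequality $(y \bmod 1)^2 \geq \sin^2(\pi y)/\pi^2$, giving $h(r)^2 \geq (1 - \phi(r))/(2\pi^2)$ where $\phi(r) := \E\cos(2\pi r Z_1)$, together with the uniform-in-$m$ moment bounds $\E Z_1^2 = 1$ and $\E Z_1^{2k} \leq (2k-1)!!$ (which follow from the explicit formula $\E X_1^{2k} = (2k-1)!!/\prod_{j<k}(m+2j)$ for a uniform vector on the sphere).

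First I would rule out small $r$ from being pseudodual. A fourth-order Taylor expansion of $\phi$ at $0$ (with $\phi'(0) = \phi'''(0) = 0$ by symmetry and $\phi''(0) = -4\pi^2$), with Lagrange remainder controlled by $|\phi^{(4)}| \leq 16\pi^4 \E Z_1^4 \leq 48\pi^4$, yields $\phi(r) \leq 1 - 2\pi^2 r^2 + 2\pi^4 r^4$. For $r \leq r_0 := 1/(\pi\sqrt{2})$ the quartic term is dominated by half the quadratic term, so $\phi(r) \leq 1 - \pi^2 r^2$ and hence $h(r)^2 \geq r^2/2 > r^2/4$. So any pseudodual $\theta$ must have $r \geq r_0$.

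It then remains to show $h(r) \geq c$ uniformly for $r \geq r_0$ and $m \geq 2$; equivalently, that $\phi(r)$ is bounded away from $1$ in this range. On a bounded interval $[r_0, R]$ (with $R$ an absolute constant to be chosen), I would use the lower bound $1 - \phi(r) = 2\E\sin^2(\pi r Z_1) \geq 8r^2\, \E[Z_1^2; |Z_1| \leq 1/(2r)]$ (via $|\sin x| \geq 2|x|/\pi$ on $[-\pi/2, \pi/2]$), with the complementary tail $\E[Z_1^2; |Z_1| > 1/(2r)]$ controlled by Markov against $\E Z_1^{2k}$ for $k$ large enough in terms of $R$. For $r \geq R$ I would use the closed-form Bessel representation $\phi(r) = \Gamma(m/2)(\pi r\sqrt{m})^{-(m-2)/2} J_{(m-2)/2}(2\pi r\sqrt{m})$ together with standard decay of the normalized Bessel function to obtain the required uniform bound.

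The main obstacle will be uniformity in $m$ for small $m$, where $Z_1$ does not have a bounded density (at $m = 2$ it has integrable singularities at $\pm\sqrt{2}$), so generic density-based tail arguments fail. For $m = 2$ the problem reduces to analyzing $\phi_2(r) = J_0(2\pi r\sqrt{2})$, whose successive local maxima are strictly decreasing in magnitude, so $\sup_{r \geq r_0}\phi_2(r)$ is an explicit computable constant well below $1$; the remaining small-$m$ cases can be handled by parallel explicit arguments using the Bessel form, and for large $m$ the passage to the standard Gaussian limit $\phi(r) \to e^{-2\pi^2 r^2}$ makes the bound straightforward.
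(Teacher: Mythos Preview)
Your route is genuinely different from the paper's. Both proofs exploit rotational symmetry to reduce to a one–dimensional problem and split into a ``small $\|\theta\|$'' regime (not pseudodual) and a ``large $\|\theta\|$'' regime ($\tilde X(\theta)$ bounded below), but the techniques diverge from there. The paper works directly with the explicit density $f_\delta(x)\propto (1-(x/\delta)^2)^{(m-3)/2}$ of $\delta X_1$. For small $\delta$ it estimates $\E[(\delta X_1\bmod 1)^2]$ by subtracting the contribution from $|x|>1/2$ and bounding that tail via the Gaussian majorant. For large $\delta$ it uses \emph{unimodality} of $f_\delta$ to get $\E[(\delta X_1\bmod 1)^2]\ge \tfrac{1}{48}\Pr[|\delta X_1|>1/2]$, and then a short translation/scaling comparison on the density yields $\Pr[|\delta X_1|>1/2]\ge \tfrac12 e^{-(m-3)/(2\delta^2)}$, which is an absolute positive constant once $\delta\gtrsim\sqrt m$. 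No case analysis in $m$ is needed.

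Your Fourier-analytic reduction via $(y\bmod 1)^2\ge \sin^2(\pi y)/\pi^2$ and fourth-order Taylor on $\phi$ is correct and arguably cleaner for the small-$r$ regime: only $\E Z_1^2=1$ and $\E Z_1^4\le 3$ are used, and you get $r_0=1/(\pi\sqrt2)$ without integrating the density. The gap is in your intermediate-$r$ step. The Markov bound $\E[Z_1^2;|Z_1|>a]\le (2k-1)!!/a^{2k-2}$ does \emph{not} improve by taking $k$ large when $a=1/(2r)$ is small: for $a\le 1$ the $k=1$ bound is exactly $1$ and all larger $k$ are worse, and even with $k=2$ you only get below $1$ for $a>\sqrt3$, i.e.\ $r<1/(2\sqrt3)\approx 0.289$, with the bound degenerating at that endpoint. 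So ``$k$ large in terms of $R$'' does not do what you want, and the interval $[r_0,R]$ you can cover this way is vanishingly thin. Consequently your Bessel/Gaussian-limit plan must handle essentially all $r\ge r_0$, and making that uniform in $m$ at moderate arguments $t=2\pi r\sqrt m$ is exactly the hard part; farming out small $m$ to explicit checks is valid in principle but much heavier than the paper's single unimodality argument, which delivers the $m$-uniform large-$\|\theta\|$ bound in a few lines.
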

Before we prove the lemma, let's show how to use it and \cref{thm:nonlattice_local_limit} to prove discrepancy bounds.

\begin{proof}[Proof of \cref{thm:unit_disc}] Let $X$ be a random unit vector. We need to choose our function $B$. 
\begin{defin} For $K > 0$, let $B = \frac{1}{(2K)^{2m}} 1_{[-K,K]^m} * 1_{[-K,K]^m}$. Alternately, one can think of $B$ as the density of a sum of two random vectors from the cube $[-K,K]^m$. \end{defin}
It's not hard to show $\|B\|_1 = 1$ using that $B$ is a density that and that $\|\widehat{B}\|_1 = \frac{1}{(2K)^{m}}$ using Plancherel's theorem. Next, we apply \cref{thm:nonlattice_local_limit} to $Z = \sqrt{m} X$; in order to apply the theorem we need $$ n \geq N_2 := \refstepcounter{const}\label{unitdiscrep} c_{{\theconst}} \max\left\{m^3 \log^2 m, m^{-1}, m^3 \log^2 (1/K)\right\}.$$ Thus, we may take 
$$n \geq c_{\ref*{cubeconst}} m^3 \log^2m \textrm{ and } K = c_{\ref*{expconst}} e^{-\sqrt{\frac{n}{m^3}}}. $$ 
We also need to obtain a lower bound on $\E [B(Y)]$ in order to use the bound on $|\E[B(Y_M)] - \E[B(Y)]|$ to deduce that $\E[B(Y_M)] > 0$, or equivalently that $\disc M \leq 2K$. The quantity $\E B(Y)$ is at least the least density of $Y$ on the support of $B$. The support of $B$ is contained within a $2K\sqrt{m}$ Euclidean ball. Using the property $MM^\dagger \geq \frac{1}{2}n  I_m$, the density of $Y$ takes value at least 
\begin{align*} \frac{1}{(2\pi)^{m/2}\sqrt{\det(\Sigma_M)}} e^{-2  \sigma_{min}(MM^\dagger)^{-1} 4 K^2 m} &\geq 
\frac{1}{(2\pi)^{m/2}\sqrt{\det(\Sigma_M)}} e^{-16 K^2 m/n}.
\end{align*}
Since the error term in \cref{thm:nonlattice_local_limit} is at most $\frac{1}{(2\pi)^{m/2}\sqrt{\det(\Sigma_M)}}2 m^2 L^2 n^{-1}$, to deduce $\disc M \leq K$ it is enough to show  $ e^{-16 K^2 m/n} > 2m^3 n^{-1}$; indeed this is true with $K = c_{\ref*{expconst}} e^{-\sqrt{\frac{n}{m^3}}}$ and $n \geq m^3 \log^2 m$ for suitably large $c_{\ref*{expconst}}$.\end{proof}

\subsubsection{Spanningness for random unit vectors}
We now lower bound the spanningness for random unit vectors. We use the fact that for large $m$ the distribution of a random unit vector behaves much like a Gaussian upon projection to a one-dimensional subspace. 
\begin{proof}[Proof of \cref{uniti2ev}]
Let $\|\theta\|_X =\frac{1}{\sqrt{m}}\delta > 0$. We split into two cases. In the first, we show that if $\delta = O(\sqrt{m})$, then $\theta$ is not pseudodual. In the second, we show that if $\delta = \Omega(\sqrt{m})$ then $\tilde{X}(\theta)$ is at least a fixed constant. This establishes that $s(X)$ is at least some constant. 

By rotational symmetry we may assume $\theta$ is $\delta e_1$, where $e_1$ is the first standard basis vector, so $$\tilde{X}(\theta)^2 =  \E[(\langle X, \theta \rangle \bmod{1})^2 ] = \E[(\delta X_1 \bmod{1})^2 ].$$ 

 
We now try to show $\theta$ is not a pseudodual vector if $\delta = O( \sqrt{m})$. Recall that $X$ is a random unit vector; it is easier to consider the density of $X_1$. The probability density function of $\delta X_1$ for $x < \delta$ is proportional to $(1-(x/\delta)^2)^{\frac{m-3}{2}} =: f_\delta(x)$.
Integrating this density gives us 
\begin{align*}
\int_{-\delta}^\delta \left(1-\left(\frac{x}{\delta}\right)^2\right)^{\frac{m-3}{2}}dx &= \delta\int_{0}^\delta (1 - x)^{\frac{m-3}{2}} x^{-\frac{1}{2}} dx\\
 &= \frac{\delta \Gamma\left( \frac{m-1}{2}\right) \Gamma\left( \frac{1}{2}\right)}{ \Gamma\left( \frac{m}{2}\right)}\\
 &= \frac{\delta \sqrt{\pi}}{ \sqrt{m}}\left(1 + o(1)\right).
\end{align*}
Therefore, we obtain 
\begin{align*}
\E[( \delta X_1 \bmod{1})^2 ] &=  \frac{ \Gamma\left( \frac{m}{2}\right)}{\delta \Gamma\left( \frac{m-1}{2}\right) \Gamma\left( \frac{1}{2}\right)} \int_{-\delta}^{\delta} (x \bmod 1)^2 \left(1-\left(\frac{x}{\delta}\right)^2\right)^{\frac{m-3}{2}}dx. \\
\end{align*}
If we simply give up on all the $x$ for which $|x| > 1/2$, we obtain the following lower bound for the above quantity:
\begin{align*}
&\frac{ \Gamma\left( \frac{m}{2}\right)}{\delta \Gamma\left( \frac{m-1}{2}\right) \Gamma\left( \frac{1}{2}\right)}\left[\int_{-\delta}^{\delta} x^2 \left(1-\left(\frac{x}{\delta}\right)^2\right)^{\frac{m-3}{2}}dx - 2\int_{1/2}^{\delta} x^2 \left(1-\left(\frac{x}{\delta}\right)^2\right)^{\frac{m-3}{2}}dx \right]\\
&= \frac{\delta^2}{m} - \frac{2 \Gamma\left( \frac{m}{2}\right)}{\delta \Gamma\left( \frac{m-1}{2}\right) \Gamma\left( \frac{1}{2}\right)} \int_{1/2}^{\delta} x^2 \left(1-\left(\frac{x}{\delta}\right)^2\right)^{\frac{m-3}{2}}dx\\
&\geq \frac{\delta^2}{m} -  (1 + o(1))\frac{2\sqrt{m - 3}}{ \delta \sqrt{ \pi}} \int_{1/2}^{\infty} x^2 e^{-\frac{(m-3)x^2}{2 \delta^2}}dx.\\
& = \frac{\delta^2}{m} -  (1 + o(1))  \frac{2^{3/2}\delta^2 }{ m} \left(\frac{1}{\sqrt{2\pi}}\int_{\frac{\sqrt{m - 3}}{2 \delta}}^{\infty} u^2 e^{-\frac{u^2}{2}}du\right)
\end{align*}
The integral in parentheses is simply the contribution to the variance of the tail of a standard gaussian, and can be made an arbitrarily small constant by making $\delta/\sqrt{m}$ small. Thus, for $\delta$ at most $\delta \leq  \refstepcounter{const}\label{mlb1}
c_{{\theconst}}\sqrt{m}$, 
the last line above expression is at least $.6^2\frac{\delta^2}{m} = .6^2\| \theta\|_X^2$, so $\theta$ is not pseudodual.\\

Next we must handle $\delta$ larger than $c_{{\theconst}}\sqrt{m}$; we will show that in this case $\tilde{X}(\theta)$ is at least some fixed constant. We use the fact that $f_\delta$ is unimodal, so for any $k \neq 0$,  $\int_{k - 1/2}^{k + 1/2} (x \bmod 1)^2  f_\delta(x) dx$ is at least the mass of $f_\delta(x)$ between $k-1/2$ and $k$ times the integral of $(x \bmod 1)^2$ on this region (that is, $1/24$). This product is then at least which is at least one 48th of the mass of $f_\delta(x)$ between $k-1/2$ and $k + 1/2$. Taken together, we see that 
\begin{equation}\label{leakage}\E[(\delta X_1 \bmod 1)^2] \geq \frac{1}{48} \Pr[|\delta X_1| \geq 1/2].\end{equation} 
We will lower bound the left-hand side by a small constant for $\delta = \Omega(\sqrt{m})$. We can do so by bounding the ratio of $\int_{-1/2}^{1/2} f_\delta(x)$ to $\int_{1/2}^\infty f_\delta(x)$. To this end we will translate and scale the function 
$$g_\delta(x) = 
\left\{\begin{array}{cc}
f_\delta(x) & x \geq 1/2\\
0 & x < 1/2
\end{array}\right.
$$
to dominate $f_\delta(x)$ for $x \in [0, 1/2]$. Let us find the smallest scaling $a > 0$ such that $a g_\delta (x + 1/2) \geq f_\delta(x)$ for $x \in [0,1/2]$; equivalently,
$a f_\delta (x + 1/2) \geq f_\delta(x)$ for $x \in [0,1/2]$. If we find such an $a$, we'll have $\int_0^{1/2} {f_\delta(x)} \leq a \int_0^\infty {g_\delta(x)} dx$, or $\Pr[x \in [-1/2, 1/2]] \leq a (1 - \Pr[x \in [-1/2, 1/2]]).$
We need
\begin{align*}
a (1-((x+1/2)/\delta)^2)^{\frac{m-3}{2}} \geq (1-(x/\delta)^2)^{\frac{m-3}{2}}, 
\end{align*}
or 
\begin{align*}a=\max_{x \in [0,1/2]} \left(\frac{1-(x/\delta)^2}{1-((x+1/2)/\delta)^2}\right)^{\frac{m-3}{2}}\\
= \max_{x \in [0,1/2]} \left(\frac{\delta^2-x^2}{\delta^2-(x+1/2)^2}\right)^{\frac{m-3}{2}}\\
\leq \left(\frac{\delta^2}{\delta^2-1}\right)^{\frac{m-3}{2}} = \left(1-\frac{1}{\delta^2} \right)^{-\frac{m-3}{2}}\\
\leq e^{\frac{(m-3)}{2\delta^2}}.
\end{align*}
As discussed, we now have $\Pr[x \in [-1/2, 1/2]] \leq a (1 - \Pr[x \in [-1/2, 1/2]]).$ Equivalently, $\Pr[x \in [-1/2, 1/2]] \leq a/(1 + a)$. Therefore
 $$\Pr[|x| > 1/2] \geq 1/(1 + a) \geq .5 e^{-\frac{m-3}{2\delta^2}}.$$
If $\delta \geq c_{\ref*{mlb1}} \sqrt{m}$, this and \cref{leakage} imply $\E[(\delta X_1 \bmod 1)^2] = \E[(\langle \theta, X \rangle \bmod 1)^2$ is at least some constant. Thus, if $\delta \geq c_{\ref*{mlb1}}$ then $\tilde{X}(\theta)$ is at least some constant. \end{proof}


\section{Open problems}

There is still a gap in understanding for $t$-sparse vectors. 
\begin{qn} Let $M$ be an $m\times n$ random $t$-sparse matrix. What is the least $N$ such that for all $n\geq N$, the discrepancy of $M$ is at most one with probability at least $1/2$? We know that for $t$ not too large or small, $m \leq N \leq m^3 \log^2 m$. The lower bound is an easy exercise. 
\end{qn}
Next, it would be nice to understand \cref{qn:random_komlos} for more column distributions in other regimes such as $n = O(m)$. In particular, it would be interesting to understand a distribution where combinatorial considerations probably won't work. For example,
\begin{qn} Suppose $M$ is a random $t$-sparse matrix plus some Gaussian noise of of variance $\sqrt{t/m}$ in each entry. Is $\disc M = o(\sqrt{t\log m})$ with high probability? How much Gaussian noise can existing proof techniques handle?
\end{qn}
The quality of the nonasymptotic bounds in this paper depend on the spanningness of the distribution $X$, which depends on how far $X$ is from lying in a proper sublattice of $\cL$. If $X$ actually \emph{does} lie in a proper sublattice of $\cL' \subset \cL$, we may apply our theorems with $\cL'$ instead. This suggests the following:

\begin{qn}
Is there an $N$ depending on only the parameters in \cref{eq:ndisc} \emph{other than spanningness} such that for all $n \geq N$, 
$$\disc M \leq \max_{\cL' \subset \cL} \rho_\infty (\cL')$$
with probability at least $1/2$?
\end{qn}

Next, the techniques in this paper are suited to show that essentially any point in a certain coset of the lattice generated by the columns may be expressed as the signed discrepancies of a coloring. This is why we obtain twice the $\ell_\infty$-covering radius for our bounds. In order to bound the discrepancy, we must know $\rho_\infty(\cL)$. However, the following question (\cref{qn:lattice_komlos} from the introduction) is still open, which prevents us from concluding that discrepancy is $O(1)$ for an arbitrary bounded distribution:
\begin{conj}
There is an absolute constant $C$ such that for any lattice $\cL$ generated by unit vectors, $\rho_\infty(\cL) \leq C$.

\end{conj}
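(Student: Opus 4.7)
This conjecture is a relaxation of the Komlós conjecture: if Komlós held with constant $C$, then for any unit-vector generating set $v_1, \ldots, v_n$ of $\cL$, assembling the $v_i$ into a matrix $V$ would yield a $\pm 1$-combination of $\ell_\infty$ norm $O(1)$, and since any $\pm 1$ combination is in particular an odd-coefficient combination, the LSV sandwich invoked in Section 1.3 would give $\rho_\infty(\cL) = O(1)$ immediately. So my plan is to attack the problem as a vector balancing statement, aiming for a uniform bound on the $\pm 1$ discrepancy of unit-column matrices and then invoking the sandwich.

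\textbf{Banaszczyk as first-order attack.} The natural tool is Banaszczyk's vector balancing theorem: for any unit vectors $v_1, \ldots, v_n \in \R^m$ and any symmetric convex body $K$ with standard Gaussian measure $\gamma_m(K) \geq 1/2$, there exist signs $\eps_i \in \{\pm 1\}$ with $\sum \eps_i v_i \in 5K$. Taking $K = [-C,C]^m$ forces $C = \Theta(\sqrt{\log m})$ to achieve $\gamma_m(K) \geq 1/2$, which yields only $\rho_\infty(\cL) = O(\sqrt{\log m})$ uniformly in $n$. This recovers Banaszczyk's record for Komlós but falls short of the $O(1)$ bound the conjecture demands.

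\textbf{Where the real difficulty lies.} Closing the $\sqrt{\log m}$ gap is the central obstacle and is at least as hard as a substantial advance on Komlós itself, so I would not expect a clean proof from any off-the-shelf argument. One structural ingredient specific to the lattice setting that a general vector-balancing problem does not see is a lower bound on the dual minimum distance: for any nonzero $w \in \cL^*$, some generator $v_i$ satisfies $\langle v_i, w \rangle \in \Z \setminus \{0\}$, hence $\|w\|_2 \geq 1$ and so $\lambda_1(\cL^*) \geq 1$. I would try to exploit this by replacing the cube in Banaszczyk's argument with a convex body adapted to the geometry of $\cL^*$, or by an iterated partial-coloring argument in the style of Rothvoss and Bansal that uses the short dual vectors to prescribe admissible slack directions. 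The principal barrier is that the $\sqrt{\log m}$ loss in Banaszczyk is inherently dimensional --- it is the price of making a cube of side $C$ have Gaussian mass $1/2$ --- and no known Gaussian-measure technique avoids it; a successful proof would likely require a genuinely new ingredient, perhaps drawing on the random-lattice local limit machinery developed earlier in this paper.
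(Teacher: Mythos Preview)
The statement you were asked to prove is a \emph{conjecture} in the paper, not a theorem: the paper offers no proof, and in fact lists it twice --- once in the introduction (as Conjecture~1.5) and again verbatim in the open-problems section --- precisely because it is open. So there is no ``paper's proof'' to compare your attempt against.

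Your proposal is not a proof either, and you correctly recognize this. Your analysis is sound: the conjecture is a weakening of Koml\'os, Banaszczyk's convex-body theorem yields only $\rho_\infty(\cL) = O(\sqrt{\log m})$, and closing the $\sqrt{\log m}$ gap would constitute real progress on Koml\'os itself. Your observation that $\lambda_1(\cL^*) \geq 1$ is a genuine structural feature of the lattice setting not present in the raw vector-balancing problem, but no one currently knows how to exploit it to remove the dimensional loss. The paper's authors are in the same position: they raise the conjecture because their main theorem reduces the random Koml\'os question (in the many-columns regime) to this deterministic covering-radius bound, and they cannot close the loop without it.

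One small technical note on your reduction: to get $\rho_\infty(\cL) \leq C$ from Koml\'os via the LSV sandwich, you need the \emph{hereditary} form of Koml\'os (discrepancy $\leq C$ for every submatrix of unit columns), since the sandwich lower bound $\rho_*(\cL) \leq \max_M \text{odd-disc}_*(M)$ is a maximum over all matrices generating $\cL$, not just one fixed generating set. The hereditary and ordinary forms of Koml\'os are equivalent up to constants, so this does not affect your conclusion, but it is worth stating the reduction precisely.
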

We could also study a random version of the above question:

\begin{qn}\label{qn:random_lattice_komlos}
Let $v_1, \dots, v_m$ be drawn i.i.d from some distribution $X$ on $\R^m$, and let $\cL$ be their integer span. Is $\rho_\infty(\cL) = O(1)$ with high probability in $m$?
\end{qn}
Here we also bring attention to a meta-question asked in \cite{LKP12, HR18}. Interestingly, though we use probabilistic tools to deduce the existence of low-discrepancy assignments, the proof does not yield any obvious efficient randomized algorithm to find them. 
\begin{qn}If an object can be proved to exist by a suitable local central limit theorem, is there an efficient randomized algorithm to find it?
\end{qn}

\section*{Acknowledgements}
The first author would like to thank Aditya Potokuchi for many interesting discussions.

\bibliographystyle{plain}
\bibliography{discrepancy_arxiv}

 \end{document}